\documentclass[11pt,reqno]{amsart}

\textheight=21truecm 
\textwidth=15truecm 
\voffset=-1cm
\hoffset=-1cm

\usepackage[english]{babel}
\usepackage{amsmath, amsthm, amssymb}
\usepackage{amsmath}
\usepackage{amsfonts}
\usepackage{amsthm}
\usepackage{mathrsfs}
\usepackage{amsrefs}
\usepackage{hyperref}
\usepackage{xcolor}
\usepackage{enumitem}
\usepackage{dsfont}

\theoremstyle{plain}
\newtheorem{theorem}{Theorem}[section]
\newtheorem*{theorem*}{Claim}


\newtheorem{lemma}[theorem]{Lemma}
\newtheorem{proposition}[theorem]{Proposition}

\theoremstyle{definition}
\newtheorem{example}{Example}

\newtheorem{remark}[theorem]{Remark}

\numberwithin{equation}{section}

\newcommand{\R}{\mathbb{R}}
\newcommand{\N}{\mathbb{N}}
\newcommand{\jacobi}{J_u}
\newcommand{\jacobinew}{\widetilde{J_u}}
\newcommand{\cc}{\textrm{\bf c}}
\newcommand{\bb}{\widehat{b}}
\newcommand{\vv}{b}
\newcommand{\dd}{d}
\newcommand{\id}{I}
\renewcommand{\div}{{\rm div}}
\newcommand{\tr}{{\rm tr}}
\renewcommand{\d}{\mathop{}\!\mathrm{d}}
\newcommand{\supp}{\, {\rm supp} \,}
\newcommand{\elliptic}{\lambda}
\newcommand{\bounded}{\Lambda}
\newcommand{\ariem}{\mathcal{A}_{\rm Riem}}
\newcommand{\nriem}{\textrm{\bf n}_{A}}
\newcommand{\ariemdos}{\mathcal{A}_{x}}
\newcommand{\hess}{Hu}
\newcommand{\lb}{\Delta_{\rm LB}}
\newcommand{\ricci}{{\rm Ric}}
\newcommand{\grad}{\rm \nabla_g}
\newcommand{\lcal}{\mathcal{L}}
\newcommand{\anew}{\mathcal{A}}
\newcommand{\nn}{\textrm{\bf n}}
\newcommand{\azero}{\mathcal{A}_{0}}
\newcommand{\ee}{\textrm{\bf e}}

\begin{document}
	
\title[Stable solutions to equations with variable coefficients]{
Stable solutions to semilinear elliptic equations for operators with variable coefficients
}
\author{I\~{n}igo U. Erneta}
\address{I.U. Erneta\textsuperscript{1,2} ---
\textsuperscript{1}Universitat Polit\`{e}cnica de Catalunya, Departament de Matem\`{a}tiques, Diagonal 647, 08028 Barcelona, Spain \&
\textsuperscript{2}Centre de Recerca Matem\`{a}tica, Edifici C, Campus Bellaterra, 08193 Bellaterra, Spain}
\email{inigo.urtiaga@upc.edu}

\thanks{
The author acknowledges financial support from the Spanish Ministry of Economy and Competitiveness (MINECO),
through the Mar\'{i}a de Maeztu Program for Units of Excellence in R\&D (MDM-2014-0445-18-1)
and is supported by grant MTM2017-84214-C2-1-P funded by MCIN/AEI/10.13039/501100011033 and by ``ERDF A way of making Europe''. 
He is member of the GenCat (Catalonia) research group 2017SGR1392 and of 
Centre de Recerca Matem\`{a}tica (CRM)}

\begin{abstract}
In this paper we extend the interior regularity results for stable solutions 
in [Cabr\'{e}, Figalli, Ros-Oton, and Serra, Acta Math. 224 (2020)]
to operators with variable coefficients.
We show that stable solutions to the semilinear elliptic equation 
$a_{ij}(x)u_{ij} + \vv_i(x) u_i + f(u) = 0$
are H\"{o}lder continuous in the optimal range of dimensions $n \leq 9$. 
Our bounds are independent of the nonlinearity $f \in C^1$, which we assume to be non-negative.

The main achievement of our work is to make the constants in our estimates depend on the $C^1$ norm of $a_{ij}$ and the $C^0$ norm of $\vv_i$, instead of their $C^2$ and $C^1$ norms, respectively, which arise in a first approach to the computations.
\end{abstract}

\maketitle

\section{Introduction}

Let $\Omega \subset \R^n$ be a bounded domain and $f \colon \R \to \R$ a $C^1$ function.
We consider stable solutions
$u\colon \overline{\Omega} \to \R$
to the semilinear equation
\begin{equation}\label{eq:omega}
- L u = f(u) \quad \text{ in } \Omega,
\end{equation}
where $L$ is a 
second order linear elliptic differential operator
of the form
\begin{equation}
\label{def:op}
L u = a_{ij}(x) u_{ij} + \vv_i(x) u_i
, \quad a_{ij}(x) = a_{ji}(x).
\end{equation}

The purpose of this paper is to 
extend the 
recent results 
of Cabr\'{e}, Figalli, Ros-Oton, and Serra
in \cite{CabreFigalliRosSerra} and of Cabr\'{e} in \cite{CabreQuant}
for the Laplacian
to the above operators with variable coefficients.
In \cite{CabreFigalliRosSerra},
the authors solved a 
long-standing conjecture concerning the regularity of stable solutions to semilinear problems.
They showed that stable solutions are bounded (and hence smooth)
in dimension $n \leq 9$.
This result
is optimal, since there are examples of singular stable solutions for $n \geq 10$.

In the papers \cite{CabreFigalliRosSerra} and \cite{CabreQuant},
the authors obtain universal a priori estimates that do not depend on the nonlinearity $f$.
They prove interior regularity bounds, assuming $f \geq 0$,
and boundary regularity estimates on $C^3$ domains, assuming $f \geq 0$, $f' \geq 0$, and $f'' \geq 0$.
The boundary result applies only to solutions vanishing on the boundary.

The first main interest in extending these results to operators with variable coefficients
(besides possible future applications to nonlinear problems)
is to simplify the boundary regularity arguments, even for the Laplacian.
Indeed, starting from a curved boundary, the proof of regularity in \cite{CabreFigalliRosSerra}
requires a delicate blow-up and Liouville theorem argument,
which is needed in order to apply a result of theirs only available on a flat boundary.
In addition, this proof is by contradiction-compactness and does not allow to quantify the constants in the estimates.
Recently in \cite{CabreQuant}, Cabr\'{e} has given a quantitative proof of 
this result for the Laplacian in the case of a flat boundary.
On the other hand, a curved boundary can be flattened out by a change of variables. 
Note now that, in the new coordinates, the Laplacian is written as an operator of the form \eqref{def:op}.
It is therefore natural to establish quantitative a priori estimates for our family of equations in the half-ball.
By extending the techniques in \cite{CabreQuant} to operators with variable coefficients,
we will avoid the 
intricate blow-up and Liouville theorem result form \cite{CabreFigalliRosSerra}
as well as the compactness part.

An important feature of our estimates is that they depend only on the ellipticity constants and on the norms $\|a_{ij}\|_{C^1}$ and $\|\vv_i\|_{C^0}$ of the coefficients.
The main difficulty in our proofs will be to 
obtain this dependence instead of on $\|a_{ij}\|_{C^2}$ and $\|\vv_i\|_{C^1}$,
which are the norms that appear naturally 
in a first approach to the computations.
This will be especially relevant for boundary regularity,
since it will allow us to relax the $C^3$ regularity requirement of the domain in \cite{CabreFigalliRosSerra}. 
The key point here is that,
as mentioned before,
flattening the boundary
transforms the Laplacian into an operator of the form \eqref{def:op},
with coefficients 
$a_{ij}$ and $\vv_i$
involving first and second derivatives of the boundary surface, respectively.
Thus, a $C^1$, $C^0$ bound of the coefficients $a_{ij}$, $\vv_i$ would correspond to a $C^2$ bound of the boundary.
The actual regularity of the boundary that is needed is under current investigation.

Moreover, our methods will also be useful in a future work
where we treat the case 
of the Laplacian
with non-homogeneous boundary conditions.
Recall that the previous papers \cite{CabreFigalliRosSerra} and \cite{CabreQuant}
require strongly that the solutions vanish on the boundary.
Flattening the boundary, we will be able to reduce the 
problem to 
an equation on the half-space for an operator of the form \eqref{def:op}
with zero boundary conditions and an additional source term.

The study of the regularity of stable solutions 
was initiated 
in the seventies
by Crandall and Rabinowitz in~\cite{CrandallRabinowitz}.
There,
they showed the boundedness of stable solutions
when $n \leq 9$ for exponential and power-type nonlinearities.
Their work was motivated by problems in combustion~\cite{Gelfand}, commonly known as ``Gelfand-type problems'';
for more information on these problems, we refer the reader to the monograph of Dupaigne~\cite{Dupaigne}.
Later, in the mid-nineties, Brezis~\cite{Brezis-IFT} 
asked for an extension of this regularity result to a larger class of nonlinearities.
The boundedness of stable solutions
was proven by Nedev~\cite{Nedev} for $n \leq 3$,
and by Cabr\'{e}~\cite{Cabre-Dim4} for $n = 4$.
The optimal dimension
$n \leq 9$ 
remained open
until 
it was finally reached by 
Cabr\'{e}, Figalli, Ros-Oton, and Serra 
in~\cite{CabreFigalliRosSerra}.

\subsection{The setting. Stability.}
We are interested in the class of stable solutions to the semilinear equation \eqref{eq:omega}.
We say that $u$ is a \emph{stable} solution of \eqref{eq:omega} if there exists a function 
$\varphi \in C^2(\Omega)\cap C^0(\overline{\Omega})$
such that
\begin{equation}
\label{stable:point}
\left\{
\begin{array}{rlll}
-L \varphi & \geq & f'(u) \varphi &\text{ in } \Omega,\\
\varphi & >& 0 &\text{ in } \Omega,\\
 \varphi & =& 0  &\text{ on } \partial\Omega.
\end{array}
\right.
\end{equation}
Equivalently, a solution is stable when 
the principal Dirichlet eigenvalue of the linearized equation is nonnegative.
We denote the linearized equation at $u$, the \emph{Jacobi operator}, by
\begin{equation}
\label{def:jacobi}
\jacobi \varphi = L \varphi + f'(u) \varphi.
\end{equation}

Throughout the paper we assume the coefficients of $L$ to be smooth.
Thus, our operator~\eqref{def:op} can be written in divergence form as
\begin{equation}
\label{def:op:div}
\lcal u = 
\partial_i \left(a_{ij}(x) \partial_j u \right)+\dd_i(x)\partial_i u,
\end{equation}
for certain appropriate coefficients $\dd_i$.
Now, recall that our bounds for non-divergence operators depend on the norms $\|a_{ij}\|_{C^1}$ and $\|\vv_i\|_{C^0}$ of the coefficients.
As a consequence, our results continue to hold for every divergence-form operator $\lcal$ as in \eqref{def:op:div},
with constants depending on $\|a_{ij}\|_{C^1}$ and $\|\dd_i\|_{C^0}$ instead.

We assume that the symmetric coefficient matrix $A(x) = \left(a_{ij}(x) \right)$ is 
uniformly elliptic
in $\Omega$, i.e.,
there are 
constants $\elliptic$ and $\bounded$ such that 
\begin{equation}
\label{elliptic}
0 < \elliptic \leq a_{ij}(x) p_i p_j \leq \bounded \quad 
\text{ for all } 
x \in \Omega \text{ and } p \in \R^n \text{ such that } |p |= 1.
\end{equation}
This condition will be written as $\elliptic \leq A(x) \leq \bounded$.
In particular, the matrix $A(x)$ is positive definite and defines a norm 
\begin{equation}
\label{def:norm}
|p|_{A(x)} := \left( a_{ij}(x) p_i p_j\right)^{1/2} 
\end{equation}
on vectors $p \in \R^n$.

For variational equations $- \partial_i \left( a_{ij}(x) \partial_j u \right) = f(u)$
stability is equivalent to the nonnegativity of the second variation of the associated energy functional.
This provides the useful integral inequality
\begin{equation}
\label{var:ineq}
\int_{\Omega} f'(u) \xi^2 \d x \leq \int_{\Omega} |\nabla \xi|_{A(x)}^2 \d x,
\end{equation}
satisfied by all test functions $\xi \in C^{1}_c(\Omega)$.
A key 
strategy to derive a priori estimates 
in that setting
is to choose appropriate test functions in \eqref{var:ineq}.
When chosen correctly in terms of the Jacobi operator,
the test functions allow to get rid of the nonlinearity within the proofs.
This is what is done for the Laplacian in \cite{CabreFigalliRosSerra}.

Since our operator $L$ does not have variational structure, \eqref{var:ineq} is not available.
Nevertheless, we are able to exploit the pointwise stability condition \eqref{stable:point}
for $\varphi$ to obtain a convenient
integral inequality which does not involve the function $\varphi$.
We will use it as replacement of \eqref{var:ineq} in our non-variational setting.
To derive the integral inequality,
we first write the operator $L$ in divergence form as in \eqref{def:op:div} with $\dd_i(x) = \bb_i(x)$,
where $\bb$ is the vector field given by
\begin{equation}
\label{def:b}
\bb_i(x) = \vv_i(x) - \partial_k a_{ki}(x),
\end{equation}
hence $L u = \partial_i \left( a_{ij}(x) u_j\right) + \bb_i(x) u_i$.
Now, for a test function $\xi \in C^1_c(\Omega)$,
multiply \eqref{stable:point} by $\xi^2/\varphi$ and integrate by parts
to obtain
\[
\begin{split}
\int_{\Omega} f'(u) \xi^2 \d x &\leq \int_{\Omega} \left( A(x) \nabla \varphi \cdot \nabla \left( \frac{\xi^2}{\varphi}\right) - \bb(x) \cdot \frac{\xi^2}{\varphi} \nabla \varphi \right) \d x\\
& = \int_{\Omega} \left(
-|\xi \nabla \log{\varphi}|_{A(x)}^2
+ 2 A(x) \xi \, \nabla \log{\varphi} \cdot \nabla \xi
- \xi \, \bb(x) \cdot \xi \, \nabla \log{\varphi} 
\right) \d x.
\end{split}
\]
Using that 
\[
2 A(x) \xi \, \nabla \log{\varphi} \cdot \left( \nabla \xi-{\textstyle\frac{1}{2}} \xi A^{-1}(x) \bb(x)\right)
-| \xi \nabla \log{\varphi}|_{A(x)}^2
\leq \left|\nabla \xi-{\textstyle\frac{1}{2}} \xi A^{-1}(x) \bb(x)\right|^2_{A(x)},
\]
we deduce
\begin{equation}\label{stable:int}
\int_{\Omega} f'(u) \xi^2 \d x 
\leq \int_{\Omega} 
\left|\nabla \xi-{\textstyle\frac{1}{2}} \xi A^{-1}(x) \bb(x)\right|^2_{A(x)}
\d x \quad \text{ for all } \xi \in C^1_c(\Omega).
\end{equation}

We remark that, in general,~\eqref{stable:int} is not equivalent to the stability condition~\eqref{stable:point}.
The main reason is that our equation $-L u = f(u)$ is not variational due to the presence of $\bb$ when written in 
the divergence form \eqref{def:op:div}.
In Appendix~\ref{app:equivalence},
we give an example of the non-equivalence,
and, at the same time, we characterize the drifts $\bb$ for which the equivalence holds.

\subsection{Main results}
This paper concerns the interior regularity of stable solutions.
Boundary regularity results will be treated in a forthcoming work.
Therefore, it suffices to consider stable solutions 
to $- L u = f(u)$ in the unit ball $B_1$.
A constant depending only on $n$, $\elliptic$, and $\bounded$ 
will be called \emph{universal},
a terminology that we use throughout the paper.

The following is our main result,
which provides interior a priori estimates for stable solutions:
a H\"{o}lder bound when $n \leq 9$,
and a $W^{1, 2 + \gamma}$ estimate in every dimension.
The only requirement for the nonlinearity is $f \geq 0$, as in \cite{CabreFigalliRosSerra}.
An important accomplishment in our estimates is that they involve the norms 
$\|A\|_{C^1}$ and $\|\vv\|_{C^0}$,
while a first approach to the problem leads to computations including 
second derivatives of $A$ and first derivatives of $\vv$.
On the other hand, our bounds are independent of $f$.

\begin{theorem}
\label{thm:main}
Let $u \in C^{\infty}(\overline{B}_{1})$ be a stable solution of $-L u = f(u)$ in $B_1 \subset \R^n$,
for some nonnegative function $f \in C^1(\R)$.

Then
\begin{equation}
\label{eq:high:int}
\|\nabla u\|_{L^{2 + \gamma}(B_{1/2})} \leq C \|u\|_{L^1(B_1)},
\end{equation}
where $\gamma = \gamma(n) > 0$
and $C = C(n, \elliptic, \|A\|_{C^1(\overline{B}_1)}, \|\vv\|_{C^0(\overline{B}_1)})$.
In addition,
\begin{equation}
\label{eq:holder}
\|u\|_{C^{\alpha}(\overline{B}_{1/2})} \leq C \|u\|_{L^1(B_1)} \quad \text{ if } n \leq 9,
\end{equation}
where 
$\alpha = \alpha(n, \elliptic, \bounded) > 0$
and  $C = C(n, \elliptic, \|A\|_{C^1(\overline{B}_1)}, \|\vv\|_{C^0(\overline{B}_1)})$.
\end{theorem}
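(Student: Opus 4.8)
The plan is to split \eqref{eq:high:int} and \eqref{eq:holder} into \emph{scale-invariant} gradient estimates on a smaller ball and a \emph{base} estimate, and then compose them. I would first show that a stable solution in $B_{3/4}$ satisfies $\|\nabla u\|_{L^{2+\gamma}(B_{1/2})} \le C\|\nabla u\|_{L^2(B_{3/4})}$ for some $\gamma = \gamma(n)>0$ and, if $n \le 9$, the Morrey bound $r^{2-n}\int_{B_r(z)}|\nabla u|^2\,dx \le C\,r^{2\alpha}\,\|\nabla u\|_{L^2(B_{3/4})}^2$ for all $B_r(z) \subset B_{3/4}$, with $\alpha = \alpha(n,\elliptic,\bounded)>0$; Morrey's embedding then gives $[u]_{C^\alpha(\overline B_{1/2})} \le C\|\nabla u\|_{L^2(B_{3/4})}$. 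Next I would prove the base estimate $\|\nabla u\|_{L^2(B_{3/4})} \le C\|u\|_{L^1(B_1)}$; then \eqref{eq:high:int} follows at once, and the $L^\infty$ part of \eqref{eq:holder} follows by bounding $\|u\|_{L^\infty(B_{1/2})}$ by the average of $|u|$ over $B_1$ plus $[u]_{C^\alpha(\overline B_{1/2})}$. To keep every constant $C$ dependent only on $n$, $\elliptic$, $\|A\|_{C^1}$ and $\|\vv\|_{C^0}$, I would systematically use the divergence form $Lu = \partial_i(a_{ij}u_j) + \bb_i u_i$ of \eqref{def:op:div}--\eqref{def:b} and never integrate by parts so as to differentiate $\bb$ or to differentiate $A$ twice.

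The base estimate is the least delicate ingredient. Since $-Lu = f(u) \ge 0$, testing $\partial_i(a_{ij}u_j) + \bb_i u_i = -f(u)$ against a nonnegative cutoff equal to $1$ on $B_{3/4}$ and integrating by parts once (which only sees $A$ through $\bb$, and sees $\|\vv\|_{C^0}$) gives $\|f(u)\|_{L^1(B_{3/4})} \le C\|\nabla u\|_{L^1(B_1)}$; combined with the interior $L^1$-elliptic estimate for $L$ in divergence form --- whose constants depend only on the ellipticity, the modulus of continuity of $A$ (hence $\|A\|_{C^1}$), and $\|\vv\|_{C^0}$ --- and with the reverse H\"older inequality below, this bootstraps to $\|\nabla u\|_{L^2(B_{3/4})} \le C\|u\|_{L^1(B_1)}$, as in \cite{CabreFigalliRosSerra}.

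The core of the argument --- and the step I expect to be the main obstacle --- is a \emph{geometric stability inequality} for $L$. The natural quantity is $w := |\nabla u|_{A(x)}$, the very object on the right-hand side of \eqref{stable:int}. Inserting test functions of the form $\xi = w\,\eta$, and later $\xi = w\,|x-z|^{-\beta/2}\zeta$, into \eqref{stable:int} and expanding the square, the remaining task is to bound $\int f'(u)\,w^2\eta^2$ from below by geometric quantities. For the Laplacian this is the Bochner / Sternberg--Zumbrun identity, which contributes the favourable term $\int\big(|\mathrm{II}|^2|\nabla u|^2 + |\nabla_T w|^2\big)\eta^2$, with $\mathrm{II}$ the second fundamental form of the level sets of $u$. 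For $L$ the analogous computation is the Bochner formula for the Laplace--Beltrami operator of the metric $g = A^{-1}$, plus lower-order drift terms, and it involves the Ricci curvature of $g$ --- hence, naively, $\|A\|_{C^2}$ --- while differentiating the equation for $\partial_e u$ brings in $\nabla\bb$. The device to avoid both is never to keep these terms pointwise: once everything is under the integral sign, I would integrate the $\partial^2 A$-terms by parts once, trading them for terms of the schematic form $(\partial A)(\partial u)(\partial^2 u)$, and rearrange the drift contributions so that only $\bb$ itself appears. The cost is $\|\nabla^2 u\|_{L^2}$ on small balls, which I would reabsorb via a Caccioppoli inequality for $w = |\nabla u|_A$ --- itself obtained from \eqref{stable:int} with $\xi = w\eta$ and the same integration-by-parts bookkeeping, not by differentiating the equation. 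The outcome is an inequality of the form
\[
\int_{B_1}\big(|\mathrm{II}|^2|\nabla u|^2 + |\nabla_T w|^2\big)\eta^2\,dx \;\le\; \int_{B_1}|\nabla u|^2|\nabla\eta|^2\,dx \;+\; C\int_{\supp\eta}\big(|\nabla u|^2 + |\nabla^2 u|^2\big)\,dx,
\]
with $C = C(n,\elliptic,\|A\|_{C^1},\|\vv\|_{C^0})$ and the last term reabsorbable on a concentric smaller ball, together with its analogue carrying the Hardy weight $|x-z|^{-\beta}$.

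Granting this, the two scale-invariant estimates follow along the lines of \cite{CabreFigalliRosSerra} and \cite{CabreQuant}. For the $W^{1,2+\gamma}$ bound, valid in every dimension, the Caccioppoli inequality for $w$ and a Sobolev--Poincar\'e inequality yield a reverse H\"older inequality of the form $\big(\rho^{-n}\!\int_{B_{\rho/2}(z)}|\nabla u|^2\,dx\big)^{1/2} \le C\,\rho^{-n}\!\int_{B_\rho(z)}|\nabla u|\,dx + \text{(lower order)}$ on balls $B_\rho(z) \subset B_{3/4}$, and Gehring's lemma upgrades the exponent to $2+\gamma$. For the $C^\alpha$ bound one uses the weighted geometric inequality: the crucial point, unchanged from the constant-coefficient case, is that the purely algebraic inequality relating the Hardy constant $\tfrac{(n-2)^2}{4}$ to the contributions $|\mathrm{II}|^2|\nabla u|^2$ and $|\nabla_T w|^2$ closes exactly when $n \le 9$; being algebraic, it fixes $\alpha$ as a function of $n,\elliptic,\bounded$ alone, whereas the variable-coefficient error terms only affect $C$. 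This yields the Morrey decay $r^{2-n}\int_{B_r(z)}|\nabla u|^2\,dx \le C\,r^{2\alpha}\,\|\nabla u\|_{L^2(B_{3/4})}^2$, and Morrey's embedding concludes.
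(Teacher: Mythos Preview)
Your overall architecture (scale-invariant gradient estimates plus a base $L^2$-to-$L^1$ step, followed by scaling/covering) matches the paper's, and your intuition that one must integrate by parts to trade $\partial^2 A$ and $\partial\bb$ for lower-order errors is right. But there is a genuine gap in the $C^\alpha$ part, and some looseness elsewhere.

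\textbf{The $C^\alpha$ route does not close with Sternberg--Zumbrun alone.} Your claim that ``the purely algebraic inequality relating the Hardy constant $\tfrac{(n-2)^2}{4}$ to $|\mathrm{II}|^2|\nabla u|^2$ and $|\nabla_T w|^2$ closes exactly when $n\le 9$'' is not how the optimal dimension is reached. Testing stability with $\xi=|\nabla u|\,|x|^{-\beta/2}\zeta$ only yields H\"older regularity up to $n\le 5$ (this is the Peng--Zhang--Zhou route, discussed in the paper's Remark~5.4). The threshold $n\le 9$ comes from a \emph{different} test function, namely $\cc = x\cdot\nabla u$ with weight $|x|^{(2-n)/2}$: the computation in the paper's Lemma~5.2 produces the coefficients $(n-2-a)$ and $\tfrac{a(8-a)}{4}$, and choosing $a=n-2$ gives the factor $\tfrac{(n-2)(10-n)}{4}>0$ precisely for $3\le n\le 9$. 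This controls only the weighted \emph{radial} derivative $\int r^{2-n}u_r^2$, not the full gradient. Passing from the radial derivative to the full gradient is a separate, nontrivial step (the paper's Proposition~1.5, proved by duality from $L^\infty$ bounds for a Neumann problem via Moser iteration); you have no analogue of this ingredient.

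\textbf{The Hessian reabsorption is not as you describe.} You propose to reabsorb an $\|D^2u\|_{L^2}$ error via a Caccioppoli inequality for $w=|\nabla u|_A$. But testing \eqref{stable:int} with $\xi=w\eta$ \emph{is} the Sternberg--Zumbrun computation, and it only controls the tangential Hessian $\anew$, not $\|D^2u\|_{L^2}$. The paper never bounds $\|D^2u\|_{L^2}$; instead it proves $\|\,|D^2u||\nabla u|\,\|_{L^1}\le C\|\nabla u\|_{L^2}^2$ (Theorem~\ref{thm:sz}, \eqref{ineq:hessgrad1}), and the missing normal--normal component of the Hessian is handled using the sign $f\ge 0$ (see \eqref{signtrick}). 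Your sketch does not invoke $f\ge 0$ at this stage, yet without it the full Hessian is not controlled by $\anew$.

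\textbf{Two smaller differences.} First, the paper uses $\cc=|\nabla u|_{A(0)}$ rather than $|\nabla u|_{A(x)}$, which avoids second derivatives of $A$ from the outset (see Remark~3.2); your choice works but needs an extra integration by parts. Second, for the $W^{1,2+\gamma}$ bound the paper does not use Gehring: it shows $\int_{\{u=t\}\cap B_{1/2}}|\nabla u|^2\,d\mathcal{H}^{n-1}\le C\|\nabla u\|_{L^2(B_1)}^2$ from the Hessian estimate and then runs the coarea-formula device of \cite{CabreFigalliRosSerra}. Your Gehring approach is plausible in spirit, but the reverse H\"older you need is essentially the paper's Proposition~\ref{prop:l2l1}, whose proof already requires the full Hessian-times-gradient estimate and Cabr\'e's interpolation inequalities---not the bootstrap you outline.
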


For applications, it may be useful to point out that
the result only needs $A$ to be Lipschitz and $\vv$ to be bounded.
Our direct computations within the proofs assume 
$A \in C^1$ and $\vv \in C^1$
in order to evaluate
certain identities pointwise.
However, we only need these to be meaningful in a weaker sense;
see Remarks~\ref{sz:lessreg} and~\ref{rad:lessreg}.
Similarly,
we only need 
$u$ to be $C^2$ and to have weak third derivatives.
These last conditions seem to require more regularity of the drift $\vv$; see Remark~\ref{sz:lessreg}.

The proof of Theorem~\ref{thm:main} will rely
on our second main result, 
Theorem~\ref{thm:sz} below, and its consequences.
It consists of 
two types of Hessian estimates.
The first one, \eqref{ineq:sz},
is an extension of 
the geometric stability condition due to Sternberg and Zumbrun~\cite{SternbergZumbrun1}
to operators with variable coefficients.
The second one, \eqref{ineq:hessgrad1}-\eqref{ineq:hessgrad2},
controls the $L^1$ norm of the ``Hessian times the gradient'', $|D^2 u| |\nabla u|$, 
in balls and annuli
by the $L^2$ norm squared of the gradient 
whenever the lower order coefficients are small.

\begin{theorem}\label{thm:sz}
Let $u \in C^{\infty}\big(\overline{B}_{1}\big)$ be a stable solution of 
$-L u = f(u)$ in $B_1$,
for some function $f\in C^1(\R)$.
Assume that 
\[
\| D A \|_{C^0(\overline{B}_1)} + \| \vv \|_{C^0(\overline{B}_1)} \leq \varepsilon
\]
for some $\varepsilon > 0$.

Then 
\begin{equation}
\label{ineq:sz}
\begin{split}
\int_{B_1}  \anew^{2}  \eta^2 \d x & \leq 
\int_{B_1}  |\nabla u|_{A(0)}^2\left( |\nabla \eta|^2_{A(x)} + C \varepsilon |\nabla (\eta^2)| + C \varepsilon^2 \eta^2 \right)\d x \\
& \quad \quad \quad + C \varepsilon \int_{B_1} |D^2 u| |\nabla u| \eta^2  \d x
\end{split}
\end{equation}
for all $\eta \in C^{\infty}_c(B_1)$,
where $C$ is a universal constant
and
\begin{equation}
\label{def:a}
\anew := 
\left\{
\begin{array}{ll}
\Big(\tr(A(x) D^2 u A(0) D^2 u) -|\nabla u|_{A(0)}^{-2}  |D^2 u A(0) \nabla u|^2_{A(x)} \Big)^{1/2} & \text{ if } \nabla u \neq 0 \\
0 & \text{ if } \nabla u = 0.
\end{array}
\right.
\end{equation}

Assume in addition that $f \geq 0$.
Then there is a universal constant $\varepsilon_0 > 0$ with the following property:
if $\varepsilon \leq \varepsilon_0$,
then
\begin{equation}
\label{ineq:hessgrad1}
\| \,|\nabla u| \, D^2 u \, \|_{L^1(B_{3/4})} \leq C \|\nabla u\|^2_{L^2(B_1)}
\end{equation}
and
\begin{equation}
\label{ineq:hessgrad2}
\| \,|\nabla u| \, D^2 u \, \|_{L^1(B_{1/2} \setminus B_{1/4})} \leq C \|\nabla u\|^2_{L^2(B_1 \setminus B_{1/8})},
\end{equation}
where $C$ is a universal constant.
\end{theorem}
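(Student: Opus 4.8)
The plan is to derive the Sternberg–Zumbrun-type inequality \eqref{ineq:sz} by choosing a specific test function in the integral stability inequality \eqref{stable:int}, and then to bootstrap from \eqref{ineq:sz} (together with an interior gradient estimate) to the Hessian-times-gradient bounds \eqref{ineq:hessgrad1}--\eqref{ineq:hessgrad2}. For the first part, I would differentiate the equation $-Lu = f(u)$ in the direction $e$ to get an equation for $u_e$, namely $L u_e + f'(u) u_e = -(\partial_e a_{ij}) u_{ij} - (\partial_e \vv_i) u_i$; the right-hand side is the source of the $\varepsilon$-terms and is controlled by $\|DA\|_{C^0} + \|\vv\|_{C^0} \le \varepsilon$ together with $|D^2 u|$ and $|\nabla u|$. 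One then tests \eqref{stable:int} with $\xi = |\nabla u|\,\eta$ (suitably regularized where $\nabla u = 0$, in the usual way), using that $f'(u)$ couples to the linearized equation satisfied by each $u_e$. Expanding $|\nabla \xi - \tfrac12 \xi A^{-1}\bb|_{A(x)}^2$ and using the Cauchy–Schwarz/Kato-type identity that produces $\anew^2$ as the defect between $|\nabla |\nabla u||^2$-type quantities and the full Hessian term (this is where the geometric meaning of $\anew$ in \eqref{def:a} enters — it is the tangential part of the Hessian on the level sets, measured in the frozen-coefficient metrics $A(x)$, $A(0)$), the terms not involving $\anew$ or $|D^2u||\nabla u|$ organize into $\int |\nabla u|_{A(0)}^2(|\nabla\eta|_{A(x)}^2 + C\varepsilon|\nabla(\eta^2)| + C\varepsilon^2\eta^2)$, with all cross terms where a derivative of $A$ or a factor of $\vv$ appears absorbed into the $C\varepsilon$ terms, and the genuinely second-order error $(\partial_e a_{ij})u_{ij}$ absorbed into $C\varepsilon\int |D^2u||\nabla u|\eta^2$. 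The freezing of coefficients at the origin in the leading quadratic form is the device that trades a second derivative of $A$ (which would appear if one differentiated $|\nabla u|_{A(x)}^2$) for a first derivative plus a localization error; this is the crux of getting the $C^1$ rather than $C^2$ dependence, and it is the step I expect to be the main obstacle to organize cleanly.

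For the second part, assume now $f \ge 0$. The strategy is standard once \eqref{ineq:sz} is in hand: one first needs to replace $\anew^2$ by something comparable to $|D^2 u|^2$ up to a manageable error. Away from the critical set $\{\nabla u = 0\}$ one has the pointwise decomposition $\tr(A(x)D^2u\,A(0)D^2u) = \anew^2 + |\nabla u|_{A(0)}^{-2}|D^2u\,A(0)\nabla u|_{A(x)}^2$, and the equation $-Lu = f(u)$ with $f \ge 0$ controls the "normal" second-derivative term $a_{ij}u_{ij}$ by $\vv\cdot\nabla u$, hence (after freezing, up to $C\varepsilon|D^2u||\nabla u|$ and $C\varepsilon|\nabla u|^2$ errors) by $C\varepsilon|\nabla u|$; squaring and rearranging gives $|D^2u|^2\eta^2 \lesssim \anew^2\eta^2 + C\varepsilon^2|\nabla u|^2\eta^2 + (\text{terms already on the RHS of \eqref{ineq:sz}})$, so that \eqref{ineq:sz} upgrades to a bound on $\int |D^2u|^2\eta^2$. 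Choosing $\eta$ a fixed cutoff equal to $1$ on $B_{3/4}$ and supported in $B_1$ (respectively an annular cutoff for \eqref{ineq:hessgrad2}), the right-hand side becomes $C\int_{B_1}|\nabla u|^2 + C\varepsilon\int_{B_1}|D^2u||\nabla u|\eta^2$. One then controls the last term by Cauchy–Schwarz: $C\varepsilon\int|D^2u||\nabla u|\eta^2 \le \tfrac12\int|D^2u|^2\eta^2 + C\varepsilon^2\int|\nabla u|^2\eta^2$ — but this requires the $|D^2u|^2$ integral to be over the same region, so one needs a slightly larger cutoff on the left, handled by a standard one-parameter family of nested cutoffs and reabsorption. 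After reabsorbing $\tfrac12\int|D^2u|^2\eta^2$ into the left-hand side (legitimate since all integrals are finite, $u$ being smooth up to $\overline{B}_1$), we obtain $\int_{B_{3/4}}|D^2u|^2 \le C\|\nabla u\|_{L^2(B_1)}^2$ provided $\varepsilon \le \varepsilon_0$ is small enough for the absorptions; one more Cauchy–Schwarz, $\int_{B_{3/4}}|\nabla u||D^2u| \le \|\,|D^2u|\,\|_{L^2(B_{3/4})}\|\nabla u\|_{L^2(B_{3/4})} \le C\|\nabla u\|_{L^2(B_1)}^2$, yields \eqref{ineq:hessgrad1}. The annular estimate \eqref{ineq:hessgrad2} follows verbatim with $\eta$ supported in $B_1\setminus B_{1/8}$ and equal to $1$ on $B_{1/2}\setminus B_{1/4}$. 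The delicate points are the correct regularization near $\{\nabla u = 0\}$ when testing with $|\nabla u|\eta$ (to make the differentiated-equation computation rigorous, using that $u \in C^\infty$ and an approximation $\sqrt{|\nabla u|^2 + \delta}$), and tracking that every error term genuinely carries a factor of $\varepsilon$ so that the smallness hypothesis does the absorbing; I would isolate the pointwise algebraic identity relating $\anew^2$, $|D^2u|^2$, and $a_{ij}u_{ij}$ as a separate lemma before assembling the integral estimate.
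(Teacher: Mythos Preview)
Your outline for \eqref{ineq:sz} is close to the paper's, which indeed tests the stability inequality with $\cc=|\nabla u|_{A(0)}$ (regularized as $\sqrt{|\nabla u|_{A(0)}^2+\delta^2}$) and differentiates the equation. One point you glossed over: when you differentiate $-Lu=f(u)$ you produce a term $(\partial_e \vv_i)u_i$, and this is \emph{not} controlled by $\|\vv\|_{C^0}\le\varepsilon$; it is controlled by $\|D\vv\|_{C^0}$. The paper fixes this by one integration by parts, moving the derivative off $\vv$ and onto $u$ and $\eta$; without that step your claim that ``the right-hand side is controlled by $\|DA\|_{C^0}+\|\vv\|_{C^0}$'' is false.

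The serious gap is in your argument for \eqref{ineq:hessgrad1}--\eqref{ineq:hessgrad2}. You assert that $f\ge 0$ controls $|a_{ij}u_{ij}|$ by $C\varepsilon|\nabla u|$, and that squaring then gives $|D^2u|^2\lesssim\anew^2+C\varepsilon^2|\nabla u|^2$. Both claims fail. From $-Lu=f(u)\ge 0$ you only get the \emph{one-sided} bound $a_{ij}(x)u_{ij}\le -\vv\cdot\nabla u\le C\varepsilon|\nabla u|$; the quantity $a_{ij}u_{ij}=-f(u)-\vv\cdot\nabla u$ can be arbitrarily negative, since no bound on $f(u)$ is assumed. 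Moreover, the ``normal'' part you must control to pass from $\anew^2$ to $|D^2u|^2$ is not the scalar $a_{ij}u_{ij}$ but the full vector $D^2u\,A(0)\nabla u$ (equivalently $|A^{1/2}(x)D^2u A^{1/2}(0)\nn|^2$); only its $\nn$-component is comparable to the trace. So the pointwise inequality $|D^2u|^2\lesssim\anew^2+C\varepsilon^2|\nabla u|^2$ is simply false, and no $L^2$ Hessian bound of the form $\int_{B_{3/4}}|D^2u|^2\le C\|\nabla u\|_{L^2(B_1)}^2$ is available here.

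The paper's route is different and uses the sign information more carefully. Working with the symmetric auxiliary quantity $\azero$ (the analogue of $\anew$ with $A(0)$ in both slots), one first shows the \emph{linear} pointwise bound
\[
|D^2u|\le -C\,\tr\!\big(A(0)D^2u\big)+C\azero+C\varepsilon|x|\,|D^2u|+C\varepsilon|\nabla u|,
\]
where the signed term $-\tr(A(0)D^2u)$ (not its absolute value) survives precisely because $f\ge 0$ gives $|\tr(A(0)D^2u)|\le -\tr(A(0)D^2u)+C\varepsilon(|x||D^2u|+|\nabla u|)$. One then multiplies by $|\nabla u|_{A(0)}\eta^2$ and integrates. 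The crucial step is the divergence identity
\[
-2|\nabla u|_{A(0)}\tr\!\big(A(0)D^2u\big)\le -\div\!\big(|\nabla u|_{A(0)}A(0)\nabla u\big)+C\azero|\nabla u|_{A(0)},
\]
which after integration by parts converts the dangerous trace term into $\int|\nabla u|^2|\nabla(\eta^2)|$. Combined with \eqref{ineq:sz} and the comparability $\azero\le C\anew$ for small $\varepsilon$, this yields $\int|D^2u||\nabla u|\eta^2\le C\int\anew|\nabla u|\eta^2+C\int|\nabla u|^2(|\nabla(\eta^2)|+\varepsilon\eta^2)$, and Cauchy--Schwarz finishes. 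Your proposal misses this divergence-identity mechanism and tries to square too early; that is where it breaks.
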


Our first inequality \eqref{ineq:sz} 
generalizes the Sternberg-Zumbrun estimate for the Laplacian,
which corresponds to the case $\varepsilon = 0$ and $A(0) = \id$.
The peculiar form of the function $\anew$ in \eqref{def:a}
(the coefficients of $A$ are evaluated both at $x$ and $0$)
will guarantee that the direct computations within our proofs give dependence on the norm $\|A\|_{C^1}$, instead of $\|A\|_{C^2}$
for other choices of $\anew$.
In this direction,
it is worth noting that
the classical Sternberg-Zumbrun result and the function $\anew$
have a Riemannian analogue
(found by Farina, Sire, and Valdinoci~\cite{FarinaSireValdinoci})
which can be related
to our Euclidean setting with variable coefficients.
The estimate from the Riemannian framework leads to bounds depending
on $\|A\|_{C^2}$.
We elaborate on these topics further in Remarks~\ref{remark:const} and \ref{remark:riem}.

The ``Hessian times the gradient'' estimates \eqref{ineq:hessgrad1}-\eqref{ineq:hessgrad2}
rely on the inequality \eqref{ineq:sz}
with sufficiently small errors $\varepsilon$,
and will require the assumption  $f \geq 0$.
While the bound on annuli \eqref{ineq:hessgrad2} can be deduced from the one in balls \eqref{ineq:hessgrad1}
by a standard scaling and covering argument,
we include it in the statement since it will be crucial in the proof of the H\"{o}lder estimate in Theorem~\ref{thm:main}.

\subsection{Structure of the proof}

By a scaling and covering argument, it suffices to obtain the a priori estimates from Theorem~\ref{thm:main} in small balls.
There, the problem can be written as an equation in the unit ball
involving an operator $L$ that is close to the Laplacian, i.e.,
whose coefficients satisfy $A(0) = \id$ and $\|D A \|_{C^0(\overline{B}_1)} + \|\vv\|_{C^0(\overline{B}_1)} \leq \varepsilon$, with $\varepsilon$ small.
We explain this in more detail in Section~\ref{section:prelim} below.

The key estimates leading to Theorem~\ref{thm:main}
are contained in
Propositions~\ref{prop:l2l1}, \ref{prop:rad}, and \ref{prop:l1rad} below.
Our proofs are all quantitative as in the paper~\cite{CabreQuant}
and avoid the compactness argument from the previous work~\cite{CabreFigalliRosSerra}.
The proofs of the first two propositions 
use the Hessian estimates of Theorem~\ref{thm:sz} above.
In particular, this forces us to prove the Sternberg-Zumbrun inequality
before the crucial weighted $L^2$ estimate for the radial derivative (Proposition~\ref{prop:rad}).
It is worth noting that,
for the Laplacian, these two results are independent from each other
(and  hence can be obtained in any order, as in the works \cite{CabreFigalliRosSerra} and \cite{CabreQuant}),
while this is no longer the case for operators with variable coefficients.

In the first proposition,
we control the $L^2$ norm of the gradient by the $L^1$ norm of the solution
when the error $\varepsilon$ of the coefficients is sufficiently small.
This is a direct consequence of Theorem \ref{thm:sz}
and the interpolation inequalities of Cabr\'{e} in~\cite{CabreQuant}.
We recall these inequalities in Appendix~\ref{app:interpolation}.
\begin{proposition}
\label{prop:l2l1}
Let $u \in C^{\infty}\big(\overline{B}_{1}\big)$ be a stable solution of 
$-L u = f(u)$ in $B_1$,
for some nonnegative function $f\in C^1(\R)$.
Assume that 
\[
\| D A \|_{C^0(\overline{B}_1)} + \| \vv \|_{C^0(\overline{B}_1)} \leq \varepsilon
\]
for some 
$\varepsilon > 0$.

Then there exists a universal $\varepsilon_0 > 0$ 
with the following property:
if $\varepsilon \leq \varepsilon_0$, then 
\begin{equation}\label{eq:grad2by1}
 \|\nabla u\|_{L^{2}(B_{1/2})}  \le C \| u\|_{L^{1}(B_{1})},
\end{equation}
where $C$ is a universal constant.
\end{proposition}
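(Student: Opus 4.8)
The plan is to deduce \eqref{eq:grad2by1} by inserting the ``Hessian times gradient'' bound \eqref{ineq:hessgrad1} of Theorem~\ref{thm:sz} into the interpolation inequalities of \cite{CabreQuant} recalled in Appendix~\ref{app:interpolation}, and then to close the estimate by an absorption argument carried out over a one-parameter family of concentric balls. Let $\varepsilon_0 > 0$ be the universal constant provided by the second part of Theorem~\ref{thm:sz}, and assume $\varepsilon \leq \varepsilon_0$.

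The first step is to observe that \eqref{ineq:hessgrad1} is available at every scale inside $B_1$. Indeed, given $x_0 \in B_1$ and $0 < \rho \leq 1 - |x_0|$, the rescaled function $v(y) := u(x_0 + \rho y)$ is a stable solution in $B_1$ of $-\widetilde{L} v = \rho^2 f(v)$, where $\widetilde{L}$ has coefficients $\widetilde{A}(y) = A(x_0 + \rho y)$ and $\widetilde{\vv}(y) = \rho\, \vv(x_0 + \rho y)$. Since $\rho^2 f \geq 0$ and
\[
\|D\widetilde{A}\|_{C^0(\overline{B}_1)} + \|\widetilde{\vv}\|_{C^0(\overline{B}_1)} \leq \rho \big( \|DA\|_{C^0(\overline{B}_1)} + \|\vv\|_{C^0(\overline{B}_1)} \big) \leq \varepsilon \leq \varepsilon_0,
\]
Theorem~\ref{thm:sz} applies to $v$. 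Rescaling back and covering $B_r$ by boundedly overlapping balls $B_{3\rho/4}(x_0)$ with $x_0 \in B_r$ and $\rho$ comparable to $R - r$, one obtains
\[
\big\| \, |\nabla u| \, D^2 u \, \big\|_{L^1(B_r)} \leq \frac{C}{R - r} \, \|\nabla u\|_{L^2(B_R)}^2 \qquad \text{for all } 0 < r < R \leq 1,
\]
with $C$ universal.

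Next, the interpolation inequalities of Appendix~\ref{app:interpolation} furnish, for $0 < r < R \leq 1$, a Gagliardo--Nirenberg type bound that is \emph{sublinear} in the Hessian term, of the schematic form
\[
\|\nabla u\|_{L^2(B_r)}^2 \leq \frac{C}{(R-r)^{\sigma}} \Big( \big\| \, |\nabla u| \, D^2 u \, \big\|_{L^1(B_R)}^{\theta} \, \|u\|_{L^1(B_R)}^{2 - 2\theta} + \|u\|_{L^1(B_R)}^2 \Big),
\]
with $\theta \in (0,1)$ and $\sigma, \theta, C$ depending only on $n$; this step is purely real-analytic. I would then combine the two: apply the interpolation inequality on a pair $B_r \subset B_\rho$, and the Hessian bound above on $B_\rho \subset B_R$ with $\rho = (r + R)/2$. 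Since the interpolation is sublinear in the Hessian term, Young's inequality allows the resulting power of $\|\nabla u\|_{L^2(B_R)}$ to be absorbed with a \emph{fixed} small constant, the entire blow-up in $R - r$ being pushed onto the lower-order term $\|u\|_{L^1(B_R)}^2$. This yields
\[
\|\nabla u\|_{L^2(B_r)}^2 \leq \tfrac14 \|\nabla u\|_{L^2(B_R)}^2 + \frac{C}{(R-r)^{\tau}} \, \|u\|_{L^1(B_1)}^2 \qquad \text{for all } \tfrac12 \leq r < R \leq 1,
\]
for some universal $\tau$. Since $\|\nabla u\|_{L^2(B_1)} < \infty$, a standard iteration lemma then yields $\|\nabla u\|_{L^2(B_{1/2})}^2 \leq C \|u\|_{L^1(B_1)}^2$, which is \eqref{eq:grad2by1}.

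The only point that needs care is the bookkeeping of radii: one must ensure that, after combining the two estimates, the coefficient multiplying the gradient norm over the larger ball is a \emph{fixed} fraction rather than a radius-dependent quantity, so that the iteration lemma applies — this is precisely what the sublinearity of the interpolation inequality guarantees. Note also that the smallness assumption on $\varepsilon$ is used only through Theorem~\ref{thm:sz}; the interpolation and absorption steps are independent of it.
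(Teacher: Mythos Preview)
Your approach is essentially correct and follows the same strategy as the paper: combine the Hessian estimate \eqref{ineq:hessgrad1} with the interpolation inequalities of Appendix~\ref{app:interpolation}, then absorb. The paper's execution differs in two minor respects. First, the interpolation inequalities in Appendix~\ref{app:interpolation} are stated on a single cube with a free parameter $\delta$ (not as a product-form Gagliardo--Nirenberg inequality on a pair of balls); the paper uses them directly in that form, covering $B_{1/2}$ by disjoint cubes contained in $B_{3/4}$, and keeps $\delta$ as a free small parameter rather than optimizing it away. Your ``schematic'' sublinear form is equivalent after optimizing in $\delta$ and covering, but this derivation is not quite immediate from what is written in Appendix~\ref{app:interpolation}. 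Second, for the absorption step the paper invokes Simon's Lemma (Appendix~\ref{app:simon}), applied to the subadditive set function $\sigma(B)=\|\nabla u\|_{L^2(B)}^2$, rather than the Giaquinta-type concentric-balls iteration you propose. Both mechanisms achieve the same end; Simon's Lemma has the advantage that the estimate in Step~1 need only be proved once (on $B_{1/2}\subset B_1$) and then rescaled, whereas your route requires tracking the $(R-r)$ dependence explicitly. Your observation that the smallness of $\varepsilon$ enters only through Theorem~\ref{thm:sz} is correct.
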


The second proposition
is a weighted $L^2$ estimate for the radial derivative in a ball
by 
the full gradient in an annulus.
It is here that we need $n \leq 9$.
Again, we will assume that the coefficient error $\varepsilon$ is small
and that the nonlinearity is nonnegative $f \geq 0$.
Here and throughout the paper we use the notation 
\[
r = |x|, \quad u_r = \dfrac{x}{|x|} \cdot \nabla u
\]
for the radial derivative.

\begin{proposition}
\label{prop:rad}
Let $u \in C^{\infty}\big(\overline{B}_{1}\big)$ 
be a stable solution of 
$- L u = f(u)$ in $B_1$,
for some nonnegative
function $f \in C^1(\R)$.
Assume that 
\[
\| D A \|_{C^0(\overline{B}_1)} + \| \vv \|_{C^0(\overline{B}_1)} \leq \varepsilon
\]
for some $\varepsilon > 0$.

Then there is a universal $\varepsilon_0 > 0$ with the following property:
if $\varepsilon \leq \varepsilon_0$ and $3 \leq n \leq 9$, then
\begin{equation}
\label{ineq:rad:decay}
\begin{split}
\int_{B_{\rho}} r^{2-n} u_r^2 \d x &\leq C \int_{B_{2\rho} \setminus B_{\rho}} r^{2-n} |\nabla u|^2 \d x + C \varepsilon \int_{B_{4\rho}} r^{3-n} |\nabla u|^2 \d x
\end{split}
\end{equation}
for all $\rho \leq 1/4$, where $C$ is a universal constant.
\end{proposition}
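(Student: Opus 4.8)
The plan is to adapt to our setting the scheme used for the Laplacian in \cite{CabreFigalliRosSerra} and \cite{CabreQuant}, which combines the stability inequality \eqref{stable:int} with the linearized equation solved by the first derivatives of $u$, tested against functions weighted by a Newtonian kernel; the new ingredient is that the error terms created by the variable coefficients are controlled by the Hessian estimates of Theorem~\ref{thm:sz}. By rescaling it suffices to prove \eqref{ineq:rad:decay} for one fixed value of $\rho$, say $\rho = 1/4$: setting $v(y) := u(4\rho y)$ turns the statement for general $\rho \le 1/4$ into the one for $\rho = 1/4$, and the point — crucial for the correct scaling of the last term in \eqref{ineq:rad:decay} — is that $v$ solves an equation whose coefficients satisfy $\|DA_v\|_{C^0} + \|\vv_v\|_{C^0} \le 4\rho\,\varepsilon$, so that one may apply the $\rho = 1/4$ estimate with $4\rho\,\varepsilon$ in place of $\varepsilon$. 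Throughout we write $c := x\cdot\nabla u = r\, u_r$, so the left-hand side of \eqref{ineq:rad:decay} equals $\int_{B_{\rho}} |x|^{-n} c^2\,\d x$.

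The first step is the equation solved by $c$. Differentiating $-Lu = f(u)$ and reorganizing gives
\[
- L c = f'(u)\, c + 2 f(u) + E,
\]
where $E$ is a finite sum of terms, each linear in one of $\vv$, $DA$, $D\vv$ and in $\nabla u$ or $D^2 u$. The terms containing $D\vv$ (and $DA$ times $D^2 u$) would force a dependence on $\|D\vv\|_{C^0}$ and $\|DA\|_{C^1}$, so $E$ is never used pointwise: after multiplying the equation by a test function and integrating, an integration by parts shifts the derivative off $\vv$ — replacing the $D\vv$ contribution by $\vv$ times first derivatives of $u$ and of the test function, at the cost of one extra factor $|D^2 u|$ — and off $A$, leaving only $\|DA\|_{C^0}$. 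I would carry this out with the weight $w := \mathcal{N}_{A(0)}\,\zeta^2$, where $\mathcal{N}_{A(0)}$ is the fundamental solution of the frozen operator $a_{ij}(0)\partial_{ij}$ (comparable to $|x|^{2-n}$) and $\zeta = \zeta_\rho$ is a cutoff with $\zeta \equiv 1$ on $B_\rho$, $\supp\zeta \subset B_{2\rho}$, $|\nabla\zeta| \lesssim \rho^{-1}$; since $\mathcal{N}_{A(0)}$ is singular at the origin one further truncates it by a cutoff supported away from $0$, which is removed at the end because $c$ vanishes linearly at $0$. Writing $L$ in divergence form \eqref{def:op:div}, this produces an identity for $\int_{B_1} f'(u)\, c^2 w\,\d x$ whose error is bounded by
\[
C\varepsilon \int_{B_{2\rho}} |x|^{3-n}\big(|\nabla u| + |x|\,|D^2 u|\big)|\nabla u|\,\d x \;+\; \big(\text{terms supported in } B_{2\rho}\setminus B_\rho\big).
\]

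One then inserts the test function built from $c$ and $w$ into \eqref{stable:int} and subtracts the identity above. The terms with $f'(u)$ cancel. The term $\int f(u)\, c\, w$ is disposed of using $f\ge 0$: writing $f(u)\, x\cdot\nabla u = x\cdot\nabla(F(u))$ with $F' = f$ and $F \ge 0$ (possible since $u$ is bounded), an integration by parts shows that this term has the favorable sign, up to a contribution supported in $B_{2\rho}\setminus B_\rho$. After a further integration by parts that exploits that $\mathcal{N}_{A(0)}$ is $A(0)$-harmonic — with the passage from $A(0)$ to $A(x)$ producing only a $\|DA\|_{C^0}$ error — the ``main'' part reduces to a weighted functional inequality on $\R^n$ in which $\int_{B_\rho}|x|^{2-n}u_r^2\,\d x$ appears with a strictly positive coefficient precisely when $n\le 9$. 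This is exactly the dimensional threshold of \cite{CabreFigalliRosSerra} and \cite{CabreQuant}; it originates from the sharp constant of a weighted Hardy–Poincar\'e inequality with weight $|x|^{2-n}$, which degenerates as $n\to 10$ (here $n\ge 3$ is also used, so that $|x|^{2-n}$ is the relevant singular weight). The cutoff $\zeta_\rho$ produces the first term on the right of \eqref{ineq:rad:decay}.

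It remains to absorb the error terms, and this is where Theorem~\ref{thm:sz} is used a second time; the need for the Hessian bounds \eqref{ineq:hessgrad1}--\eqref{ineq:hessgrad2} here is the reason the Sternberg–Zumbrun inequality must be established before this proposition, in contrast with the case $\varepsilon = 0$. The contributions supported on $B_{2\rho}\setminus B_\rho$ are all bounded by $C\int_{B_{2\rho}\setminus B_\rho}|x|^{2-n}|\nabla u|^2\,\d x$, using \eqref{ineq:hessgrad2} (after rescaling) for the ones with $|D^2 u|$. The interior error involving $|D^2 u|$ is split by Young's inequality into a fixed small multiple of a weighted Hessian term $\int_{B_{2\rho}}|x|^{4-n}|D^2 u|^2\zeta^2\,\d x$, controlled and absorbed via \eqref{ineq:hessgrad1}, and a term $\lesssim\varepsilon\int_{B_{2\rho}}|x|^{4-n}|\nabla u|^2\,\d x \le C\varepsilon\int_{B_{4\rho}}|x|^{3-n}|\nabla u|^2\,\d x$, using $|x|\le 2\rho$ on the support; the purely gradient errors are already of this last form. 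I expect the main obstacle to be precisely this bookkeeping: each of the many lower-order terms must be shown to be a controlled fraction of $\int_{B_\rho}|x|^{2-n}u_r^2\,\d x$, or to be dominated by $\int_{B_{2\rho}\setminus B_\rho}|x|^{2-n}|\nabla u|^2\,\d x$ or by $\varepsilon\int_{B_{4\rho}}|x|^{3-n}|\nabla u|^2\,\d x$, while every constant keeps depending only on the ellipticity constants, $\|A\|_{C^1}$ and $\|\vv\|_{C^0}$ — which is exactly what the form of $\anew$ in \eqref{def:a} and the inequality \eqref{ineq:sz} are designed to guarantee.
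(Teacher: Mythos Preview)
Your overall architecture is close to the paper's, but there is a genuine gap in the step where you control the interior error
\[
\varepsilon \int_{B_{2\rho}} |x|^{4-n}\,|D^2 u|\,|\nabla u|\,\d x .
\]
You propose to split it by Young's inequality into a ``small multiple of $\int_{B_{2\rho}}|x|^{4-n}|D^2 u|^2\zeta^2\,\d x$, controlled and absorbed via \eqref{ineq:hessgrad1}''. This does not work: \eqref{ineq:hessgrad1} bounds $\int |D^2 u|\,|\nabla u|$, not $\int |D^2 u|^2$, and it is unweighted. No weighted $L^2$ Hessian estimate is available in Theorem~\ref{thm:sz}; the inequality \eqref{ineq:sz} only controls $\int \anew^2\eta^2$, and $\anew^2$ misses precisely the $\nn\otimes\nn$ component of $A^{1/2}(0)D^2uA^{1/2}(0)$, so it cannot dominate $|D^2 u|^2$. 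There is also nothing on the left-hand side into which such a term could be absorbed.

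The paper handles this error by a different mechanism that you do not mention: a dyadic decomposition of $B_{2\rho}$ into annuli $B_{r_{j+1}}\setminus B_{r_{j+2}}$ with $r_j=2^{-j}$. On each annulus the weight $|x|^{4-n}$ is comparable to the constant $r_j^{4-n}$, so it can be pulled out, and then the \emph{annulus} estimate \eqref{ineq:hessgrad2} (rescaled to $u(r_j\,\cdot)$) gives
\[
\int_{B_{r_{j+1}}\setminus B_{r_{j+2}}} |D^2 u|\,|\nabla u|\,\d x \;\le\; C\,r_j^{-1}\int_{B_{r_j}\setminus B_{r_{j+3}}} |\nabla u|^2\,\d x .
\]
Summing over $j$ yields directly
\[
\int_{B_{2\rho}} r^{4-n}|D^2 u|\,|\nabla u|\,\d x \;\le\; C\int_{B_{4\rho}} r^{3-n}|\nabla u|^2\,\d x,
\]
which is exactly the form of the last term in \eqref{ineq:rad:decay}. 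This is why \eqref{ineq:hessgrad2} on annuli (not \eqref{ineq:hessgrad1} on balls) is the relevant input here. A secondary remark: your detour through the $2f(u)$ term and $F(u)$ is unnecessary --- working with the Jacobi operator identity $\jacobi(x\cdot\nabla u)=L(x\cdot\nabla u)-x\cdot\nabla(Lu)$ as in \eqref{stable:jacobi} makes $f(u)$ never appear, and $f\ge 0$ enters the argument only through Theorem~\ref{thm:sz}.
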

Notice that this result requires $n \geq 3$.
However, adding superfluous variables to the solution,
we can also use it when $n \leq 2$.

Our inequality \eqref{ineq:rad:decay} in Proposition~\ref{prop:rad}
is an analogue of Lemma 2.1 in \cite{CabreFigalliRosSerra}, where the authors obtain 
a similar 
bound for the Laplacian ($\varepsilon = 0$ and $A(0) = \id$)
without the nonnegativity assumption on $f$.
Recall that this assumption is needed in the Hessian estimates~\eqref{ineq:hessgrad1}-\eqref{ineq:hessgrad2} in Theorem~\ref{thm:sz} above,
which will allow us to treat a weighted $|D^2 u| |\nabla u|$ error term 
which does not appear for the Laplacian.
We will be able to control this error by writing it as an infinite sum on dyadic annuli,
pulling the weight out of the integral in each annulus,
and applying the bound \eqref{ineq:hessgrad2}.

Finally in the third proposition
we show that,
under the assumption that $A(0) = \id$,
(generalized) superharmonic functions
are controlled by the radial derivative
plus an error involving the full gradient in $L^1$.
This is an extension of Lemma~4.1 in Cabr\'{e}~\cite{CabreQuant} 
to operators with variable coefficients.

\begin{proposition}
\label{prop:l1rad}
Let $u \in C^{\infty}(\overline{B}_{1})$ be 
superharmonic in the sense that
$L u \leq 0$ in $B_1$.
Assume that 
\[
A(0) = \id
\quad \text{ and } \quad 
\|D A\|_{C^0(\overline{B}_1)} + \| \vv\|_{C^0(\overline{B}_1)} \leq \varepsilon
\]
for some $\varepsilon > 0$.

Then
there exists a constant $t$, which depends on $u$,
such that
\[
\|u - t\|_{L^{1}( B_1 \setminus B_{1/8})} \leq C \| u_{r} \|_{L^1( B_1 \setminus B_{1/8})} + C \varepsilon \|\nabla u \|_{L^{1}(B_1)},
\]
where $C$ is a constant depending only on $n$ and $\elliptic$.
\end{proposition}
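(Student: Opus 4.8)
The plan is to split $u$ into its spherical averages and an angular remainder, after first recasting $L$ as a small perturbation of $\Delta$ written in divergence form. Write $Lu=\div(A\nabla u)+\bb\cdot\nabla u$ with $\bb_i=\vv_i-\partial_k a_{ki}$, so $\|\bb\|_{C^0(\overline{B}_1)}\le C\varepsilon$; since $A(0)=\id$ and $\|DA\|_{C^0(\overline{B}_1)}\le\varepsilon$, the entries of $\id-A(x)$ are $\le\varepsilon$ on $B_1$, and $Lu\le 0$ becomes
\[
\Delta u\ \le\ \div\big((\id-A(x))\nabla u\big)-\bb(x)\cdot\nabla u\qquad\text{in }B_1 .
\]
Keeping the error in divergence form is essential: every later integration by parts will then produce first derivatives of $u$ only, so the error contributes a term $C\varepsilon\|\nabla u\|_{L^1}$ rather than $C\varepsilon\|D^2u\|_{L^1}$. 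Let $\phi(r)$ be the average of $u$ on $\partial B_r$, so $\phi'(r)$ is the average of $u_r$ on $\partial B_r$ and $|\phi'|\le(\text{average of }|u_r|)$. Taking $t:=\phi(1/2)$, polar coordinates and this bound give $\|\phi(|\cdot|)-t\|_{L^1(B_1\setminus B_{1/8})}\le C\int_{1/8}^1|\phi'(\rho)|\,\d\rho\le C\|u_r\|_{L^1(B_1\setminus B_{1/8})}$; this step does not use superharmonicity. It remains to bound the angular remainder $v:=u-\phi(|\cdot|)$, which has zero average on every sphere.

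For $v$ one works on $\mathbb{S}^{n-1}$, using $\Delta u=u_{rr}+\tfrac{n-1}{r}u_r+\tfrac1{r^2}\Delta_{\mathbb{S}^{n-1}}u$, i.e.\ $\Delta_{\mathbb{S}^{n-1}}v(r\,\cdot)=r^2\Delta u-r^2u_{rr}-(n-1)ru_r$. Two difficulties appear: this identity involves the second radial derivative, and $Lu\le0$ gives only a one-sided bound on $\Delta u$. To handle both, for each $r_0\in[1/8,1]$ fix a nonnegative $C^\infty$ bump $\zeta_{r_0}$ with $\int\zeta_{r_0}=1$ supported in an interval $J_{r_0}\subset[1/8,1]$ containing $r_0$ of length $\le1/4$ (placed one-sidedly near the endpoints $1/8,1$), and set $P_{r_0}(\omega):=\int\zeta_{r_0}(\rho)u(\rho\omega)\,\d\rho$. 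Since $\zeta_{r_0}$ has unit mass and is supported near $r_0$, one has $\big\|P_{r_0}-(\text{its }\mathbb{S}^{n-1}\text{-average})-v(r_0\,\cdot)\big\|_{L^1(\mathbb{S}^{n-1})}\le C\int_{J_{r_0}}(\text{average of }|u_r|\text{ on }\partial B_\rho)\,\d\rho$. On the other hand, applying $\Delta_{\mathbb{S}^{n-1}}$ to $P_{r_0}$, inserting the identity above, using $\Delta u\le\div((\id-A)\nabla u)-\bb\cdot\nabla u$ (legitimate since $\zeta_{r_0}\ge0$), decomposing the Euclidean $\div$ into its radial-derivative part plus $\rho^{-1}\div_{\mathbb{S}^{n-1}}$ of the tangential component, and integrating by parts \emph{once} in $\rho$ to remove $u_{\rho\rho}$, one obtains
\[
\Delta_{\mathbb{S}^{n-1}}P_{r_0}\ \le\ F_{r_0}+\div_{\mathbb{S}^{n-1}}G_{r_0}\qquad\text{on }\mathbb{S}^{n-1},
\]
where $F_{r_0},G_{r_0}$ are integrals over $\rho\in J_{r_0}$ against \emph{bounded} kernels of $u_\rho(\rho\,\cdot)$ and of $(\id-A)\nabla u$, $\bb\cdot\nabla u$; in particular $\|F_{r_0}\|_{L^1(\mathbb{S}^{n-1})}+\|G_{r_0}\|_{L^1(\mathbb{S}^{n-1})}\le C\int_{J_{r_0}}(\text{average of }|u_r|\text{ on }\partial B_\rho)\,\d\rho+C\varepsilon\int_{J_{r_0}}(\text{average of }|\nabla u|\text{ on }\partial B_\rho)\,\d\rho$. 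The point of integrating by parts only once is that $u_{rr}$ becomes $u_r$, so neither second derivatives nor $v$ itself reappear on the right.

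The remaining ingredient is a one-sided $L^1$ estimate on the sphere: if $w\in C^2(\mathbb{S}^{n-1})$ has zero average and $\Delta_{\mathbb{S}^{n-1}}w\le F+\div_{\mathbb{S}^{n-1}}G$, then $\|w\|_{L^1(\mathbb{S}^{n-1})}\le C(n)\big(\|F\|_{L^1(\mathbb{S}^{n-1})}+\|G\|_{L^1(\mathbb{S}^{n-1})}\big)$. This follows from the Green representation $w(\omega_0)=\int_{\mathbb{S}^{n-1}}(-\Delta_{\mathbb{S}^{n-1}}w)\,(G_0(\omega_0,\cdot)+M)$, where $G_0$ is the Green function of $-\Delta_{\mathbb{S}^{n-1}}$ off the constants and $M=M(n)$ is chosen so that $G_0+M\ge0$: since $G_0$ and $\nabla_{\mathbb{S}^{n-1}}G_0$ have integrable singularities and $G_0$ is bounded below, multiplying $-\Delta_{\mathbb{S}^{n-1}}w\ge-F-\div_{\mathbb{S}^{n-1}}G$ by the nonnegative weight $G_0+M$ and integrating yields a pointwise lower bound for $w$, hence $\|w^-\|_{L^1}\le C(n)(\|F\|_{L^1}+\|G\|_{L^1})$, and $\|w^+\|_{L^1}=\|w^-\|_{L^1}$ because $w$ has zero average. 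Applying this to $w=P_{r_0}-(\text{its average})$ and combining with the comparison above gives, with no self-reference, $\|v(r_0\,\cdot)\|_{L^1(\mathbb{S}^{n-1})}\le C\int_{J_{r_0}}(\text{average of }|u_r|\text{ on }\partial B_\rho)\,\d\rho+C\varepsilon\int_{J_{r_0}}(\text{average of }|\nabla u|\text{ on }\partial B_\rho)\,\d\rho$ for every $r_0\in[1/8,1]$. Integrating in $r_0$, exchanging the order of integration (each $\rho$ lies in $J_{r_0}$ for a set of $r_0$ of measure $\le1/2$), and converting spherical averages back to solid integrals yields $\|v\|_{L^1(B_1\setminus B_{1/8})}\le C\|u_r\|_{L^1(B_1\setminus B_{1/8})}+C\varepsilon\|\nabla u\|_{L^1(B_1)}$. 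Together with the radial estimate this is the proposition, with $t=\phi(1/2)$ and $C$ depending only on $n$ (hence of the form $C(n,\elliptic)$).

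The hardest point, and the one the whole argument is organized around, is to run the sphere estimate in its one-sided, $L^1$, $\div_{\mathbb{S}^{n-1}}$-inhomogeneous form — so that the mere \emph{sign} of $Lu$ can be exploited — while simultaneously arranging the $\rho$-integrations so that nothing but $\|u_r\|_{L^1}$ and $\varepsilon\|\nabla u\|_{L^1}$ survives on the right-hand side; this is precisely where using the divergence form of $L$ and integrating by parts only once on the second radial derivative are indispensable, and it mirrors the main difficulty emphasized throughout the paper.
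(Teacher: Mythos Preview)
Your argument is correct and takes a genuinely different route from the paper's. The paper proves the proposition by first establishing a more general result (its Proposition~\ref{prop:con:rad}) for divergence-form operators with drift, without the hypothesis $A(0)=\id$: it picks a good radius $\rho\in[1/8,1]$ by the mean value theorem, compares $u$ on $B_\rho$ with the solution $v$ of a Dirichlet problem for $\div(A\nabla\cdot)$ (using $Lu\le 0$ and the maximum principle to get $(Nv)^-\le (Nu)^-$ on $\partial B_\rho$), and then controls $\|v-t\|_{L^1(\partial B_\rho)}$ by $\|Nv\|_{L^1(\partial B_\rho)}$ via a \emph{duality} argument with the Neumann problem $\div(A\nabla\varphi)=0$, $N\varphi=g$. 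The required $L^\infty$ bound for $\varphi$ (its Lemma~\ref{moser}) is obtained by a Moser iteration on the boundary, using the trace Sobolev inequality. Only at the very end does $A(0)=\id$ enter, to replace the conormal $Nu$ by $u_r$ up to an $\varepsilon|\nabla u|$ error.

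Your approach instead exploits $A(0)=\id$ from the outset to write $\Delta u\le \div((\id-A)\nabla u)-\bb\cdot\nabla u$, decomposes $u$ into its spherical average and an angular remainder with zero mean on each sphere, and then runs a one-sided $L^1$ estimate for $-\Delta_{\mathbb S^{n-1}}$ via the explicit Green function on the sphere (shifted to be nonnegative). The radial mollification is the device that lets you use the \emph{sign} of $Lu$ while simultaneously integrating $u_{rr}$ by parts once to avoid second derivatives; this plays the role that the comparison principle plays in the paper. What you gain is a more elementary and self-contained proof (no Moser iteration, no auxiliary Neumann problem, constants depending only on $n$), at the cost of committing early to $A(0)=\id$; the paper's route is more modular and yields the intermediate statement in terms of the conormal derivative for general uniformly elliptic $A$, which is of some independent interest.
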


Our proof of Proposition~\ref{prop:l1rad} 
is by comparison with harmonic functions.
The required $L^1$ estimates for harmonic functions
will follow by duality from the $L^{\infty}$ bounds of a Neumann problem.
Such $L^{\infty}$ estimates are the most technical part of the argument,
where we use a Moser iteration in the spirit of Winkert~\cite{Winkert} to deduce the uniform bounds.

The $W^{1, 2+\gamma}$ 
result \eqref{eq:high:int} in Theorem~\ref{thm:main}
will follow from the Hessian estimate on balls \eqref{ineq:hessgrad1} in Theorem~\ref{thm:sz} 
together with Proposition~\ref{prop:l2l1}.
To show it, 
we first control the $L^2$ norm of the gradient uniformly on level sets by 
the Dirichlet integral in a ball, and hence by the $L^1$ norm of the solution.
A device from \cite{CabreFigalliRosSerra} will then allow us to deduce the higher integrability.

To prove the H\"{o}lder estimate \eqref{eq:holder} in Theorem~\ref{thm:main},
we will show that the scale-invariant weighted integral $\int_{B_{\rho}} r^{2-n} |\nabla u|^2 $ decays algebraically.
In the previous works \cite{CabreFigalliRosSerra} and \cite{CabreQuant},
the authors proved the decay of the weighted radial derivative instead.
They could later deduce the $C^{\alpha}$ estimate
by either averaging or applying a version of Morrey's embedding for radial derivatives.
Here we will obtain the decay of the full gradient directly for the first time.
For this,
combining Propositions~\ref{prop:l2l1} and~\ref{prop:l1rad},
we are able to bound the full gradient by the radial derivative on annuli in $L^2$.
This, together with the dyadic decomposition explained above, allows us to control the weighted integral of the gradient by that of the radial derivative (up to gradient errors).
Now, Proposition~\ref{prop:rad} 
will
yield a control of the weighted integral of the gradient in the ball by the same quantity on an annulus.
Finally, a standard iteration will lead to decay.

Our integral stability inequality \eqref{stable:int}
will be crucial in the proofs of both Theorem~\ref{thm:sz} and Proposition~\ref{prop:rad}.
These will follow from \eqref{stable:int}, with $\Omega = B_1$,
by choosing appropriate test functions in terms of the Jacobi operator,
as we explain next.
Taking a test function of the form $\xi = \cc \eta$, where $\cc$ and $\eta$ are smooth and $\supp \eta \subset B_1$, 
the integrand on the right-hand side of \eqref{stable:int} becomes
\begin{equation}
\label{comp1}
\begin{split}
&\left|\nabla \xi-{\textstyle\frac{1}{2}} \xi A^{-1}(x) \bb(x)\right|^2_{A(x)}
=
\left| \eta \nabla \cc + \cc \nabla \eta -{\textstyle\frac{1}{2}} \cc \eta A^{-1}(x) \bb(x)\right|^2_{A(x)} \\
& \quad = 
\left| \eta \nabla \cc \right|_{A(x)}^2 
+ 2 A(x) \eta \nabla \cc \cdot \cc \nabla \eta 
- \eta^2 \cc \, \bb(x) \cdot \nabla \cc 
+ \cc^2 \left| \nabla \eta -{\textstyle\frac{1}{2}} \eta A^{-1}(x) \bb(x)\right|^2_{A(x)}.
\end{split}
\end{equation}
Integrating in $B_1$, the first term $\int_{B_1} |\eta \nabla \cc|^2_{A(x)} \d x$ in \eqref{comp1} can be integrated by parts as
\[
\int_{B_1} \left| \eta \nabla \cc \right|_{A(x)}^2 \d x 
= \int_{B_1} \Big( - \div\left( A(x) \nabla \cc \right) \, \cc \eta^2 - 2 A(x) \eta \nabla \cc \cdot \cc \nabla \eta \Big) \d x
\]
and hence, 
by \eqref{comp1}, \eqref{stable:int}, and rearranging terms,
it follows that
\begin{equation}\label{stable:jacobi}
\int_{B_1} (\jacobi \cc) \, \cc \eta^2 \d x 
\leq \int_{B_1} 
\cc^2 \left|\nabla \eta-{\textstyle\frac{1}{2}} \eta A^{-1}(x) \bb(x)\right|^2_{A(x)}
\d x.
\end{equation}

The key idea now is to choose the $\cc$ function
in a way such that
$\jacobi \cc$ becomes independent of the nonlinearity.
This will yield universal a priori estimates for stable solutions.
Since $u$ solves the equation $-L u = f(u)$,
taking a derivative, we have that 
$f'(u) \nabla u = - \nabla L u$
and hence 
$\jacobi \nabla u = L \nabla u - \nabla L u$ 
no longer involves $f$.
This computation suggests that we choose $\cc$ as 
a function of the gradient of $u$.
Thus,
to prove the estimate for $\anew$ in Theorem~\ref{thm:sz}, we will make the choice
\[
\cc(x) = |\nabla u(x)|_{A(0)} = \big( a_{ij}(0) u_i u_j\big)^{1/2}.
\]
On the other hand, the weighted $L^2$ bound
in Proposition~\ref{prop:rad}
will also require to choose the auxiliary function $\eta$ above carefully.
The test functions leading to this estimate are
\[
\cc(x) = x \cdot \nabla u = r u_r \quad \text{ and } \quad \eta(x) = |x|_{A^{-1}(0)}^{\frac{2-n}{2}} \zeta,
\]
where $\zeta \in C^\infty_c(B_1)$ is a cut-off.

We note that 
our
test functions 
above
are the 
ones used in the paper \cite{CabreFigalliRosSerra}
under the linear transformation $x \mapsto A^{1/2}(0) x$,
where $A^{1/2}(0)$ is the positive square root of the matrix $A(0)$.
These seem to be
the simplest functions
leading to a priori estimates 
in the variable coefficients framework.
Moreover,
thanks to the particular form of these functions,
all direct computations within our proofs will only involve first derivatives of the coefficients $A$ and $\vv$,
while other choices of functions require two derivatives of $A$.
A suitable integration by parts will yield bounds in terms of the norms 
$\|A\|_{C^1}$ and $\|\vv\|_{C^0}$, as in the results mentioned above.

\subsection{Outline of the article}
In Section~\ref{section:prelim} we briefly comment on the invariance of stability under affine transformations.
Section~\ref{section:hessian} is devoted to proving Theorem~\ref{thm:sz} and Proposition~\ref{prop:l2l1}.
In Section~\ref{section:higher:int} we prove the $W^{1, 2+\gamma}$ bound \eqref{eq:high:int} from Theorem~\ref{thm:main}.
Section~\ref{section:radial} contains the proof of Proposition~\ref{prop:rad}.
In Section~\ref{section:radl1} we prove Proposition~\ref{prop:l1rad}.
Finally, in Section~\ref{section:holder} we prove the H\"{o}lder bound \eqref{eq:holder} in Theorem~\ref{thm:main}.
In Appendix~\ref{app:equivalence} we show that the stability condition~\eqref{stable:point} is not equivalent to the integral inequality~\eqref{stable:int}.
Appendix~\ref{app:trace:sobolev}
contains an elementary proof of the Sobolev trace inequality in the ball needed in the Moser iteration leading to Proposition~\ref{prop:l1rad}.
In Appendix~\ref{app:interpolation}
we recall the interpolation inequalities of Cabr\'{e}~\cite{CabreQuant}.
In Appendix~\ref{app:simon} we recall Simon's lemma~\cite{Simon} for absorbing the errors in larger balls.

\section{Preliminaries: Invariance under affine transformations}
\label{section:prelim}

To prove Theorem~\ref{thm:main}, we will analyze the semilinear equation $-L u = f(u)$
in small balls.
Since the class of stable solutions is invariant under affine transformations,
the question reduces to studying an equation in the unit ball involving
an operator that is close to the Laplacian.
After proving 
the necessary estimates in this setting,
the theorem will follow from a scaling and covering argument.
It is worth mentioning that the nonnegativity of the nonlinearity,
which is required in our main results,
is preserved under these transformations.

We now explain this invariance with more detail in different particular situations.
First we study the equation under translations and scalings.
These simple yet important transformations will be used several times throughout the paper.
Secondly,
we consider the equation under general linear transformations.
These allow us to reduce ourselves to the case where the coefficient matrix is the identity at the origin.
Notice that this is only required in 
Proposition~\ref{prop:l1rad}, but will be crucial in the proof of the $C^{\alpha}$ bound \eqref{eq:holder} from Theorem~\ref{thm:main} given in Section~\ref{section:holder}.

As mentioned in the Introduction, the bounds in our a priori estimates depend only on the ellipticity constants $\elliptic$ and $\bounded$
and on the quantity
\begin{equation}
\label{norms}
\|D A\|_{C^0(\overline{B}_1)} +  \|\vv\|_{C^0(\overline{B}_1)}
\end{equation}
concerning the coefficients.
As we will see now, the two norms 
in \eqref{norms} 
have the same scaling.
It is therefore natural to state our results in terms of this quantity.

\begin{enumerate}
[label= (\roman*)]
\item \label{scaling} \textbf{Translation and scale invariance}.
If $u$ is a stable solution of $-L u = f(u)$ in a ball $B_{\rho}(y)$,
then the function $u^{y, \rho} := u(y + \rho \, \cdot)$
is a solution of 
$-L^{y, \rho} u^{y, \rho} = \rho^2 f(u^{y, \rho})$ in $B_{1}$,
where $L^{y, \rho}$ is the operator 
\[
L^{y, \rho} v = \tr\left(A^{y, \rho}(x) D^2 v\right) + \vv^{y, \rho}(x) \cdot \nabla v,
\]
with coefficients
\[
A^{y, \rho}(x) = A(y + \rho \, x) \quad \text{ and } \quad \vv^{y, \rho}(x) = \rho \, \vv(y + \rho \, x).
\]
The stability condition~\eqref{stable:point} in $B_{\rho}(y)$
becomes $-L^{y, \rho} \varphi^{y, \rho} \leq \rho^2 f'(u^{y, \rho}) \varphi^{y,\rho}$ in $B_{1}$,
where $\varphi^{y, \rho} = \varphi(y + \rho \, \cdot)$
satisfies the assumptions in \eqref{stable:point},
and hence $u^{y, \rho}$ is stable.

Since the coefficients satisfy the bounds
\[
\|D A^{y, \rho}\|_{C^0(\overline{B}_1)} \leq \rho \| D A\|_{C^0(\overline{B}_{\rho}(y))}
\quad \text{ and } \quad 
\| \vv^{y, \rho} \|_{C^0(\overline{B}_1)} \leq \rho \| \vv\|_{C^0(\overline{B}_{\rho}(y))},
\]
whenever $B_{\rho}(y) \subset B_R$ for some $R > 0$
and $L$ is defined in this larger ball, we have
\begin{equation}
\label{unif:old}
\|D A^{y, \rho}\|_{C^0(\overline{B}_1)} + \| \vv^{y, \rho} \|_{C^0(\overline{B}_1)} \leq \rho \left( \| D A\|_{C^0(\overline{B}_{R})} + \| \vv \|_{C^0(\overline{B}_{R})}\right),
\end{equation}
which can be made small for $\rho$ small.
In particular, 
assuming that $L$ is close to the Laplacian as in the statements of our propositions and
by \eqref{unif:old}, we deduce the following property:
if $\| D A\|_{C^0(\overline{B}_{R})} + \| \vv \|_{C^0(\overline{B}_{R})} \leq \varepsilon$, then
\[
\|D A^{y, \rho}\|_{C^0(\overline{B}_1)} + \| \vv^{y, \rho} \|_{C^0(\overline{B}_1)} \leq \rho \varepsilon \quad \text{ for all } B_{\rho}(y) \subset B_R.
\]
This elementary observation will be used throughout the paper.

\item \label{invariance} \textbf{Invariance under linear transformations}.
Given a symmetric positive definite matrix $M \in \R^{n\times n}$,
if $u$ is a stable solution of $-L u = f(u)$ in the unit ball $B_{1}$,
then the function 
$u^{M} := u(M \, \cdot)$
is a solution of 
$-L^{M} u^{M} = f(u^{M})$ in $M^{-1}(B_{1})$,
where $L^M$ is the operator 
\[
L^M v = \tr\big(A^M(x) D^2 v\big) + \vv^M(x) \cdot \nabla v,
\]
with coefficients 
\[
A^{M}(x) = M^{-1} A(M x) M^{-1} \quad \text{ and } \quad \vv^{M}(x) = M^{-1} \vv(M x).
\]
As above, the stability condition \eqref{stable:point} in $B_1$ becomes $-L^{M} \varphi^{M} \leq f'(u^{M}) \varphi^{M}$ in $M^{-1}(B_1)$, where $\varphi^{M} = \varphi(M \cdot)$,
and hence $u^{M}$ is a stable solution.

If $M$ satisfies $\sqrt{\elliptic} \leq M \leq \sqrt{\bounded}$, then the new coefficients $A^{M}$ are uniformly elliptic with 
$\elliptic/\bounded \leq A^{M}(x) \leq \bounded/ \elliptic$.
Moreover, we have the bounds
\[
|D A^{M}(x)|
\leq \frac{\sqrt{\bounded}}{\elliptic} |D A(M x)|
\quad \text{ and } \quad
|\vv^{M}(x)| 
\leq \frac{1}{\sqrt{\elliptic}} |\vv(M x)|,
\]
and taking the supremum in $x \in M^{-1} (B_1)$, 
using that 
$B_{1/\sqrt{\bounded}} \subset M^{-1}(B_1)$, we deduce
\begin{equation}
\label{unif:new}
\begin{split}
\|D A^{M}\|_{C^0(\overline{B}_{1/\sqrt{\bounded}})} + \|\vv^{M}\|_{C^0(\overline{B}_{1/\sqrt{\bounded}})} 
& \leq \frac{\sqrt{\bounded}}{\elliptic}\left(\|D A\|_{C^0(\overline{B}_1)}
+ \|\vv\|_{C^0(\overline{B}_1)}\right).
\end{split}
\end{equation}
In particular,
taking $M = A^{1/2}(0)$ as the unique positive square root of $A(0)$,
we see that $u^{A^{1/2}(0)}$ solves an elliptic equation in the ball $B_{1/\sqrt{\bounded}}$
with coefficients $(A^{A^{1/2}(0)})(x)$ satisfying $(A^{A^{1/2}(0)})(0) = \id$, i.e.,
equal to the identity at the origin.
By the monotonicity of the principal eigenvalue with respect to the domain, 
it follows that $u^{A^{1/2}(0)}$ is also a stable solution in this ball.

It is now easy to combine 
these transformations with the ones given in the first part \ref{scaling}.
For each ball $B_{\rho}(y) \subset B_1$,
the function 
$\widetilde{u} = u\big(y + \frac{\rho}{\sqrt{\bounded}} A^{1/2}(y) \, \cdot\big)$
is a stable solution of an elliptic equation 
$- \widetilde{L} \widetilde{u} = \widetilde{f}(\widetilde{u})$ 
in $B_1$.
Here, $\widetilde{f}$ is the nonlinearity
$\widetilde{f} = \frac{\rho^2}{\bounded} f$,
while
$\widetilde{L}$ is an operator of the form \eqref{def:op} with coefficients
\[
\textstyle
\widetilde{A}(x) = A^{-1/2}(y) A\big(y + \frac{\rho}{\sqrt{\bounded}} A^{1/2}(y) x \big)A^{-1/2}(y)
\]
and
\[
\textstyle
\widetilde{\vv}(x) = \frac{\rho}{\sqrt{\bounded}}A^{-1/2}(y) \vv\big(y + \frac{\rho}{\sqrt{\bounded}} A^{1/2}(y) x \big).
\]
Notice that the matrix $\widetilde{A}(x)$ 
is uniformly elliptic with $\elliptic/\bounded \leq \widetilde{A}(x) \leq \bounded/ \elliptic$
and is equal to the identity at the origin.
Furthermore,
combining \eqref{unif:new} and \eqref{unif:old},
the coefficients can be bounded by
\[
\|D \widetilde{A}\|_{C^0(\overline{B}_1)} + \|\widetilde{\vv}\|_{C^0(\overline{B}_1)}
 \leq 
 \frac{\rho}{\elliptic}\left(\|D A\|_{C^0(\overline{B}_1)}
+ \|\vv\|_{C^0(\overline{B}_1)}\right).
\]
As mentioned above,
this observation will be important in the proof of the H\"{o}lder estimate~\ref{eq:holder} 
in Section~\ref{section:holder} below.
\end{enumerate}

\section{Hessian and $W^{1,2}$ estimates}
\label{section:hessian}

The goal of this section is to prove
Theorem~\ref{thm:sz} and the energy estimate Proposition~\ref{prop:l2l1}.

Recall the function $\anew \colon \overline{B}_1 \to \R$ introduced in \eqref{def:a} in the statement of Theorem~\ref{thm:sz}.
This function can also be written as
\begin{equation}
\label{def:anew}
\anew =
\left\{\begin{array}{ll}
\Big( \|A^{1/2}(x)D^2 u A^{1/2}(0)\|_{\rm HS}^2 
- 
|A^{1/2}(x) D^2 u A^{1/2}(0) \nn(x)|^2
\Big)^{1/2} & \text{if } \nabla u \neq 0\\
        0 & \text{if } \nabla u = 0,
\end{array}\right.
\end{equation}
where $\|\cdot \|_{\rm HS}$ denotes the Euclidean Hilbert-Schmidt norm for matrices\footnote{Recall that, for a matrix $M \in \R^{n\times n}$, this norm squared is $\|M\|^2_{\rm HS} = \tr (M^{T} M) = \sum_{i, j = 1}^{n} M_{ij}^2$.}
and $\nn(x)$ is the unit vector field
$\nn \colon \overline{B}_1 \cap \{ \nabla u \neq 0\} \to \R$
given by
\begin{equation}
\label{n:vector}
\nn(x) := |\nabla u|^{-1}_{A(0)} A^{1/2}(0) \nabla u(x).
\end{equation}
The equivalence between the expressions \eqref{def:a} and \eqref{def:anew} follows from the identities
\[
\|A^{1/2}(x)D^2 u A^{1/2}(0)\|_{\rm HS}^2 = \tr\left(A(x) D^2 u A(0) D^2 u\right)
\]
and
\[
|A^{1/2}(x) D^2 u A^{1/2}(0) \nn(x)|^2 = |\nabla u|_{A(0)}^{-2} |D^2 u A(0) \nabla u|_{A(x)}^2,
\]
which are easy to check.

We start by proving the bound \eqref{ineq:sz} in Theorem~\ref{thm:sz},
which is a generalization of 
the geometric stability inequality due to Sternberg and Zumbrun~\cite{SternbergZumbrun1}
for stable solutions to $- \Delta u = f(u)$.
For this, we will test the integral stability inequality \eqref{stable:jacobi} with the function
\[
\cc(x) = |\nabla u|_{A(0)}
\]
and a cut-off $\eta$.
The proof of the remaining estimates in 
Theorem~\ref{thm:sz}
will rely on this preliminary inequality.

Two comments are in order.
First, with this choice of $\cc$,
our result requires an appropriate integration by parts to allow dependence of the bounds on only $\|\vv\|_{C^{0}}$.
Secondly,
after the proof,
in Remarks \ref{remark:const} and \ref{remark:riem} we will comment on alternative choices of $\cc$ and of the function $\anew$.

We originally took $|\nabla u|_{A(x)}$ as our $\cc$ function,
a choice that required the regularity $A \in C^2$ and $\vv \in C^1$ 
when computing $\jacobi \cc$ in the stability inequality \eqref{stable:jacobi}.
With that choice,
a further integration by parts was needed to obtain bounds depending only on $\|A\|_{C^1}$.
Instead,
our function $|\nabla u|_{A(0)}$ only needs $A \in C^1$ and $\vv \in C^1$.
Moreover, 
the proof with our choice
is easier and we only need an integration by parts to get rid of the first derivatives of $\vv$.
Note that 
the function $|\nabla u|_{A(x)}$ is motivated by geometric considerations
and had already appeared in the Riemannian analogue of the Sternberg-Zumbrun estimates, as explained in Remark~\ref{remark:riem}.

\begin{proof}[Proof of \eqref{ineq:sz} in Theorem~\ref{thm:sz}]
Since $|\nabla u|_{A(0)}$ is not necessarily smooth when $\nabla u = 0$,
we consider the smooth function
\[
\cc_{\delta} 
:= \sqrt{|\nabla u|_{A(0)}^2 + \delta^2}
\]
instead.
We will apply the stability inequality \eqref{stable:jacobi} with $\cc = \cc_{\delta}$.
In the end we will let $\delta \to 0$, which will yield the claim.
Throughout this proof, the letter $C$ denotes a generic universal constant.

By the stability inequality 
\eqref{stable:jacobi}, we have the upper bound
\begin{equation}
\label{au6new}
\begin{split}
\int_{B_1} \cc_{\delta} \, \jacobi \cc_{\delta} \, \eta^2\d x &
\leq \int_{B_1} \left(|\nabla u|_{A(0)}^2 + \delta^2\right) \left|\nabla \eta-{\textstyle\frac{1}{2}} \eta A^{-1}(x) \bb(x)\right|^2_{A(x)} \d x \\
& \leq \int_{B_1} \left(|\nabla u|_{A(0)}^2 + \delta^2\right) \left(
|\nabla \eta|^2_{A(x)} + C \varepsilon |\nabla (\eta^2)| + C \varepsilon^2 \eta^2
\right)
\d x,
\end{split}
\end{equation}
where in the last line we have expanded the quadratic expression and applied the bounds of the coefficients.

We will bound the 
expression
$\cc_{\delta} \, \jacobi \cc_{\delta} = \cc_{\delta} \, L \cc_{\delta} + f'(u) \cc_{\delta}^2$
from below.
Since
\[
\begin{split}
\partial_i \cc_{\delta} 
& =  \cc_{\delta}^{-1}\,  u_{ik } a_{kl}(0) u_{l}
\end{split}
\]
and
\[
\begin{split}
\partial^2_{ij} \cc_{\delta}
&= 
\cc_{\delta}^{-1} \, u_{ijk} a_{kl}(0) u_{l} + \cc_{\delta}^{-1} u_{ik} a_{kl}(0) u_{j l } -\cc_{\delta}^{-3} \, u_{ik} a_{kl}(0) u_{l} \,  u_{jp} a_{pq}(0) u_{q},
\end{split}
\]
we deduce
\begin{equation}
\label{au2new}
\begin{split}
\cc_{\delta} \, a_{ij}(x) \partial^2_{ij} \cc_{\delta} 
&= a_{kl}(0)u_{l} \, a_{ij}(x) u_{ijk} \\
& \quad \quad \quad + \|A^{1/2}(x) D^2 u A^{1/2}(0)\|^2_{\rm HS} - \textstyle \frac{|\nabla u|^2_{A(0)}}{|\nabla u|^2_{A(0)} + \delta^2 }|A^{1/2}(x) D^2 u A^{1/2}(0) \nn|^2\\
&\geq a_{kl}(0)u_{l} \, a_{ij}(x) u_{ijk} + \anew^2
\end{split}
\end{equation}
and
\begin{equation}
\label{au3new}
\begin{split}
\cc_{\delta} \, \vv_{i}(x) \partial_i \cc_{\delta} =
a_{kl}(0)u_l \, \vv_{i}(x) u_{ik}.
\end{split}
\end{equation}
Adding \eqref{au2new} and \eqref{au3new}, we obtain
\begin{equation}
\label{au4new}
\begin{split}
\cc_{\delta}\, \jacobi \cc_{\delta}  
&\geq a_{kl}(0) u_l \, L u_k + \anew^2 + f'(u) \cc_{\delta}^2.
\end{split}
\end{equation}

Differentiating the equation $-L u = f(u)$
in the direction of $A(0)\nabla u$, 
we have
\begin{equation}
\label{au1new}
A(0) \nabla (Lu) \cdot \nabla u = - f'(u) |\nabla u|_{A(0)}^2.
\end{equation}
The first term on the right-hand side of \eqref{au4new} can be written in terms of this derivative as
\[
a_{kl}(0) u_l \, L u_k = A(0) \nabla (Lu) \cdot \nabla u - a_{kl}(0) u_l \partial_k a_{ij}(x) u_{ij} - a_{kl}(0) u_l \partial_k \vv_{i}(x) u_i,
\]
hence, by \eqref{au1new} and 
the coefficient estimates,
we can bound this expression from below as
\begin{equation}
\label{au45}
a_{kl}(0) u_l \, L u_k \geq - f'(u) |\nabla u|_{A(0)}^2 - C \varepsilon |D^2 u | |\nabla u| - a_{kl}(0) u_l \partial_k \vv_{i}(x) u_i.
\end{equation}
Applying \eqref{au45} in \eqref{au4new},
since $\cc_{\delta}^2 - |\nabla u|_{A(0)}^2 = \delta^2$, we obtain
\begin{equation}
\label{au5new}
\begin{split}
\cc_{\delta} \, \jacobi \cc_{\delta} 
&\geq \anew^2 + \delta^2 f'(u) - C \varepsilon |D^2 u| |\nabla u| - a_{kl}(0) u_l \partial_k \vv_{i}(x) u_i .
\end{split}
\end{equation}
Multiplying \eqref{au5new} by $\eta^2$ and integrating, 
the last term $- \int_{B_1} a_{kl}(0) u_l \partial_k \vv_{i}(x) u_i  \eta^2 \d x$
can be integrated by parts as
\begin{equation}
\label{bparts}
\begin{split}
\left| - \int_{B_1}a_{kl}(0) u_l \partial_k \vv_{i}(x) u_i \, \eta^2 \d x\right| &= \left| \int_{B_1} \vv_{i}(x)  \partial_k (a_{kl}(0) u_l  u_i \, \eta^2 )\d x\right|\\
&\leq C \varepsilon \int_{B_1} |D^2 u | |\nabla u| \eta^2 \d x + C \varepsilon \int_{B_1} |\nabla u|^2 |\nabla (\eta^2)| \d x.
\end{split}
\end{equation}

Combining \eqref{au5new}, \eqref{bparts}, and \eqref{au6new}, rearranging terms, we obtain
\[
\begin{split}
&\int_{B_1} \left( \anew^2 + \delta f'(u) \right) \eta^2 \d x \\
&\quad \quad \leq 
\int_{B_1} \left(|\nabla u|_{A(0)}^2 + \delta^2\right) \left(
|\nabla \eta|^2_{A(x)} + C \varepsilon |\nabla (\eta^2)| + C \varepsilon^2 \eta^2
\right)
\d x
+ C \varepsilon \int_{B_1}  |D^2 u| |\nabla u| \eta^2 \d x,
\end{split}
\]
and letting $\delta \to 0$ yields the claim.
\end{proof}

Several remarks are in order:

\begin{remark}
\label{sz:lessreg}
In \eqref{au1new} above we took a derivative of the equation in the direction $A(0) \nabla u$ 
to get rid of the dependence on the nonlinearity.
Instead, we could have multiplied the equation 
by the test function $\xi = \div \left( A(0) \nabla u \, \eta^2 \right)$
and integrated by parts.
Notice that this avoids having to take any derivatives of $\vv$, 
since the term involving it can be bounded directly.

In the argument above, we need $u$ to have three (weak) derivatives,
otherwise we cannot compute 
$L \cc$ (or rather $L \cc_{\delta}$).
In \cite{CabreFigalliRosSerra}, the authors only need to assume $u \in C^2(B_1)$
to deduce the analogue estimate for the Laplacian,
since this already gives $u \in W^{3, p}_{\rm loc}$ for all $p < \infty$.
Indeed, differentiating the equation,
$- \Delta u_i = f'(u) u_i \in L^p_{\rm loc}$ 
and by $L^p$ estimates they deduce
$u_i \in W^{2, p}_{\rm loc}$, hence $u \in W^{3, p}_{\rm loc}$ for all $p < \infty$.
This fact allows them to carry out a similar argument to the one explained above.\footnote{In fact, they are able to deduce the estimate without computing $L \cc$ directly, but they still need to have three derivatives of the solution; see the proof of Lemma 2.1 in \cite{CabreFigalliRosSerra}.}

For an operator with variable coefficients $L$,
the regularity of the solution depends on that of the coefficients.
Assuming $a_{ij} \in C^{0, 1}(B_1)$,
$\vv_i \in L^{\infty}(B_1)$,
and $u$ bounded,
applying $L^p$ estimates to the equation $- L u = f(u) \in L^p_{\rm loc}$,
we deduce $u \in W^{2, p}_{\rm loc}$ for all $p < \infty$ (and hence in $C^{1, \alpha}$ for all  $0 < \alpha< 1$).
Now, for $u$ to be in $C^2 \cap W_{\rm loc}^{3, p}$ 
we need
more regularity on the drift $\vv$.
To see this, taking a derivative of the equation we have
$- L u_k = \partial_k a_{ij}(x) u_{ij} + \partial_k \vv_{i}(x) u_{i} + f'(u) u_k$
and the right hand side is in $L^p$ for 
$\partial_k a_{ij} \in L^{\infty}$ (i.e., $a_{ij}$ Lipschitz) and $\partial_k \vv_{i} \in L^p$.
In particular, if $\vv \in W^{1, p}(B_1)$ with $p > n$, we deduce $u \in C^{2}(B_1)\cap W^{3, p}_{\rm loc}(B_1)$.
This is somewhat surprising, 
since our estimates
do not involve any derivatives of $\vv$.
\end{remark}

\begin{remark}
\label{remark:const}
The following comments concern the form of the function $\anew$ in our a priori estimate \eqref{ineq:sz} in Theorem~\ref{thm:sz}.
Recall that $\anew$ quantifies a part of the ``mixed'', non-symmetric matrix $A^{1/2}(x) D^2 u A^{1/2}(0)$, 
which includes both the variable coefficients $A^{1/2}(x)$ and the constants $A^{1/2}(0)$.
We are led naturally to it from the choice of test function $\cc = |\nabla u|_{A(0)}$ in the integral stability inequality,
which is the function used by Cabr\'{e}, Figalli, Ros-Oton, and Serra in \cite{CabreFigalliRosSerra} after a linear transformation.

We could have also given an estimate for 
the function
\begin{equation}
\label{def:ariemdos}
\ariemdos := \Big(
\|A^{1/2}(x) D^2 u A^{1/2}(x)\|_{\rm HS}^2 - \big|A^{1/2}(x) D^2 u A^{1/2}(x) \nriem(x)\big|^2
 \Big)^{1/2}
\end{equation}
involving the symmetric matrix $A^{1/2}(x) D^2 u A^{1/2}(x)$,
where $\nriem(x) := |\nabla u|_{A(x)}^{-1} A^{1/2}(x) \nabla u$,
by choosing the test function $\cc = |\nabla u|_{A(x)}$ instead.
However,
in this case,
the proof of the analogue of Theorem~\ref{thm:sz} is more involved.
This is why we prefer our choice of $\anew$.
On the other hand, the choice $\cc = |\nabla u|_{A(x)}$
is related to an existing Riemannian version of the Sternberg and Zumbrun inequality,
which we explain next in Remark~\ref{remark:riem}.

This discussion leads to the question of whether a similar estimate exists for 
$\azero$, the natural part of the simpler symmetric matrix $A^{1/2}(0)D^2 u A^{1/2}(0)$, 
which only involves the constant coefficients $A^{1/2}(0)$.
There does not seem to be a direct way to derive such an estimate from the stability inequality,
since it is not clear which $\cc$ function could lead to it.
Nevertheless, 
when the parameter $\varepsilon$ is small,
thanks to \eqref{ultimate} below, it can be shown that $\azero$ is comparable to $\anew$.
Hence, for $\varepsilon$ small,
we can deduce the desired bound for $\azero$ from our result~\eqref{ineq:sz} for $\anew$.
We will need this fact in the proof of the ``Hessian times the gradient'' estimates in Theorem~\ref{thm:sz}, as explained below.
\end{remark}

\begin{remark}
\label{remark:riem}
Our result \eqref{ineq:sz} is related to a Riemannian analogue of the Sternberg and Zumbrun estimate
found by Farina, Sire, and Valdinoci in \cite{FarinaSireValdinoci}.
It states that stable solutions to the equation $- \lb u = f(u)$ in a Riemannian manifold $(M, g)$,
where $\lb$ is the Laplace-Beltrami operator, satisfy the inequality
\begin{equation}
\label{szriem}
\int_{M} \ariem^2 \,\eta^2 + \int_{M} \ricci(\grad u, \grad u) \, \eta^2 \leq \int_{M} |\grad u|_g^2 |\grad \eta|_g^2.
\end{equation}
Here,
$\ariem$  (given by \eqref{def:ariem}) is a Riemannian analogue of the function $\anew$ in Theorem~\ref{thm:sz},
$\ricci$ denotes the Ricci tensor,
and all the norms, gradients, and integrals are intrinsic to the metric $g$.

When expressed in coordinates, these Riemannian quantities 
fit within our Euclidean setting with variable coefficients.
For instance, the operator $\lb$ can be written in coordinates as
$L u = \div\left(A(x) \nabla u \right) + \bb(x) \cdot \nabla u$.
Here $A(x) = \left(a_{ij}(x)\right) = \left(g^{ij}(x)\right)$ is the inverse of the metric and $\bb_i(x) = \frac{1}{2} g^{ij}(x) \partial_j \log | g|$
involves the volume density $|g| = \det(g_{ij}(x))^{1/2}$.
Moreover, with our notation for matrices, the function $\ariem$ in \eqref{szriem} can be written locally in $\{\grad u \neq 0\}$ as
\begin{equation}
\label{def:ariem}
\ariem =
\Big(
\|A^{1/2}(x) \hess A^{1/2}(x)\|_{\rm HS}^2 - \big|A^{1/2}(x) \hess A^{1/2}(x) \nriem(x)\big|^2
 \Big)^{1/2},
\end{equation}
where
$\nriem(x) = |\nabla u|_{A^(x)}^{-1} A^{1/2}(x) \nabla u$ 
has appeared in the definition \eqref{def:ariemdos} of $\ariemdos$ in Remark~\ref{remark:const}
and
$\hess = \left((\hess )_{ij}\right)$ 
is the Riemannian Hessian matrix given by $(\hess)_{ij} = u_{ij} - \Gamma_{ij}^{k} u_k$, where $\Gamma_{ij}^{k}$ are the Christoffel symbols of the metric.

By this identification of $\lb$ with $L$, applying the Riemannian result in \cite{FarinaSireValdinoci},
collecting all lower order terms, 
and estimating the derivatives of the metric,
we are led to an a priori bound 
for the function $\ariemdos$ in \eqref{def:ariemdos}
which involves
errors of the same type as in~\eqref{ineq:sz}.
Due to the presence of the Ricci tensor in \eqref{szriem},
this estimate derived from the Riemannian inequality \eqref{szriem} 
depends on the norm $\|A\|_{C^2(\overline{B}_1)}$, i.e., 
it requires two derivatives of the metric.
Nevertheless, integrating the unwanted coefficient derivatives by parts as we did in our proof of \eqref{ineq:sz},
we could deduce an estimate depending only on $\|A\|_{C^1(\overline{B}_1)}$.

The authors in \cite{FarinaSireValdinoci} obtain \eqref{szriem}
by choosing the test function $\cc = |\grad u|_{g}$ in their stability inequality.
In our coordinates, this function reads as $\cc(x) = |\nabla u|_{A(x)}$.
As explained in Remark~\ref{remark:const},
this choice of $\cc$ and our integral stability inequality~\eqref{stable:jacobi}
lead to a similar estimate for $\ariemdos$ by using the ideas from the proof of \eqref{ineq:sz} above.

We emphasize that both approaches (the Riemannian one and ours)
give an estimate for $\ariemdos$ which contains an error term involving the product $|D^2 u | |\nabla u|$.
This error arises from the interaction between the second and first order terms in the Riemannian Hessian $\hess$
when squaring $\ariem$, and thus squaring $\hess = D^2 u - \Gamma \nabla u = D^2 u + O(\varepsilon |\nabla u|)$.
\end{remark}

Next, we prove the ``Hessian times the gradient'' estimates
\eqref{ineq:hessgrad1} and \eqref{ineq:hessgrad2} in Theorem~\ref{thm:sz}.
For this,
we will need to consider the auxiliary function
\begin{equation}
\label{def:alul}
\azero := 
\left\{\begin{array}{ll}
\Big( \|A^{1/2}(0)D^2 u A^{1/2}(0)\|_{\rm HS}^2 
- 
|A^{1/2}(0) D^2 u A^{1/2}(0) \nn(x)|^2
\Big)^{1/2} & \text{if } \nabla u \neq 0\\
        0 & \text{if } \nabla u = 0,
\end{array}\right.
\end{equation}
where $\nn(x) = |\nabla u|_{A^{-1}(0)}^{-1} A^{1/2}(0)\nabla u$ is again the 
vector field in the definition of $\anew$ in \eqref{def:anew}.
Notice that \eqref{def:alul} is precisely the definition of $\anew$ in \eqref{def:anew} with the matrix $A^{1/2}(x)$ replaced by $A^{1/2}(0)$;
see Remark~\ref{remark:const}.
The greatest advantage of the function $\azero$ over $\anew$ is the symmetry of 
the matrix $A^{1/2}(0) D^2 u A^{1/2}(0)$ in the definition above.
This will allow us to bound the Hessian of the solution by $\azero$, with the exception of the $\nn \otimes \nn$ component, which can be treated separately thanks to the nonnegativity assumption on the nonlinearity.

We will also need the a priori estimate \eqref{ineq:sz} proved above,
which gives a bound for the $L^2$ norm of the function $\anew$.
In the proof below, for $\|D A\|_{C^0(\overline{B}_1)} \leq \varepsilon$, 
we will see that 
\begin{equation}
\label{ultimate}
|\anew^2 - \azero^2| \leq C \varepsilon |x| \azero^2 \quad \text{ in } B_1,
\end{equation}
where $C$ is a universal constant.
In particular, for $\varepsilon$ small, the functions are comparable
and \eqref{ineq:sz} allows us to bound the $L^2$ norm of $\azero$ as well.

\begin{proof}[Proof of \eqref{ineq:hessgrad1} and \eqref{ineq:hessgrad2} in Theorem~\ref{thm:sz}]
Throughout the proof, $C$ denotes a generic universal constant.
The proof is divided into four steps.

\vspace{3mm}\noindent
\textbf{Step 1:}\textit{ We prove that
\begin{equation}
\label{key:hessian}
|D^2 u| \leq  - C\tr\big(A(0) D^2 u \big) +  C\azero + C \varepsilon |x| |D^2 u | + C \varepsilon |\nabla u|  \quad \text{ a.e. in } B_1,
\end{equation}
where $C > 0$ is universal.
} 

First we bound the full Hessian of $u$ almost everywhere
by the function $\azero$
and the $\nn \otimes \nn$ component of the matrix $A^{1/2}(0) D^2 u A^{1/2}(0)$.
If $\nabla u(x) \neq 0$,
then, extending $\nn(x)$ to an orthonormal basis of $\R^n$, it is easy to see\footnote{This follows immediately from the fact that, for any symmetric matrix $M \in \R^{n\times n}$, we have $\|M\|_{\rm HS}^2 = \sum_{i, j =1}^{n-1} M_{ij}^2 + 2 \sum_{i= 1}^{n-1} M_{in}^2 + M_{nn}^2$ and $\|M\|_{\rm HS}^2 - |M e_n|^2 = \sum_{i, j = 1}^{n-1} M_{ij}^2 + \sum_{i= 1}^{n-1} M_{in}^2$.} that
\begin{equation}
\label{meaning:alul}
\|A^{1/2}(0) D^2 u A^{1/2}(0)\|^2_{\rm HS} \leq 2 \azero^2 + \big|(A^{1/2}(0) D^2 u A^{1/2}(0))\nn(x) \cdot \nn(x)\big|^2.
\end{equation}
Moreover, by Stampacchia's result, $|D^2 u| = 0$ a.e. in $\nabla u = 0$  
(see \cite{LiebLoss}*{Theorem~6.19}),
and the inequality \eqref{meaning:alul} holds almost everywhere in $B_1$.
By uniform ellipticity we also have
$|D^2 u| \leq C |A^{1/2}(0) D^2 u A^{1/2}(0)| \leq C \|A^{1/2}(0) D^2 u A^{1/2}(0)\|_{\rm HS}$
and hence
\begin{equation}
\label{ran0}
|D^2 u| \leq C \azero + C \big|(A^{1/2}(0)D^2 u A^{1/2}(0)) \nn(x)  \cdot \nn(x) \big| \quad   \text{ a.e. in } B_1.
\end{equation}

Next we use
that the nonlinearity is nonnegative to bound the $\nn \otimes \nn$ component $(A^{1/2}(0)D^2 u A^{1/2}(0)) \nn(x) \cdot \nn(x)$
in \eqref{ran0}
in terms of the equation, the function $\azero$, and lower order terms. 

Since $ 0 \geq - f(u) = L u = \tr(A(x) D^2 u) + \vv(x) \cdot \nabla u$,
we have 
\begin{equation}
\label{signtrick}
\begin{split}
\left| \tr(A(x) D^2 u) \right| &= |L u - \vv(x) \cdot \nabla u| \\
& \leq -L u + |\vv(x) \cdot \nabla u| = -\tr(A(x) D^2 u) - \vv(x) \cdot \nabla u + |\vv(x) \cdot \nabla u|\\
& \leq -\tr(A(x) D^2 u) + 2 \varepsilon |\nabla u|.
\end{split}
\end{equation}
By the mean value theorem we have $|A(x) - A(0)| \leq \varepsilon |x|$, and hence by \eqref{signtrick}
\begin{equation}
\label{signtrick2}
\begin{split}
\left| \tr(A(0) D^2 u) \right| &\leq \left|\tr(A(x) D^2 u)\right| + C \varepsilon |x| |D^2 u| \\
&\leq  -\tr(A(x) D^2 u) + C \varepsilon |x| |D^2 u|  + C \varepsilon |\nabla u|\\
&\leq  -\tr(A(0) D^2 u) + C \varepsilon |x| |D^2 u|  + C \varepsilon |\nabla u|.
\end{split}
\end{equation}

By the same argument to deduce \eqref{meaning:alul} above,
it is easy to see that\footnote{Follows from the fact that, for any symmetric matrix $M \in \R^{n\times n}$, we have $|M| \leq n \|M\|_{\rm HS}$
and $\|M - M_{nn} e_n \otimes e_n\|^2_{\rm HS} 
\leq 2 \left(\|M \|^2_{\rm HS} - |M e_n|^2\right).$
}
\begin{equation}
\label{aux:reason}
\big|A^{1/2}(0) D^2 u A^{1/2}(0) - \big[ (A^{1/2}(0) D^2 u A^{1/2}(0)) \nn \cdot \nn \big] \nn \otimes \nn \big|
\leq C \azero \quad \text{ a.e. in } B_1.
\end{equation}
Writing the $\nn \otimes \nn$ component of $A^{1/2}(0) D^2 u A^{1/2}(0)$ as
\[
\begin{split}
&(A^{1/2}(0)D^2 u A^{1/2}(0)) \nn \cdot \nn \\
& \quad \quad =   \tr \, \Big(A^{1/2}(0)D^2 u A^{1/2}(0)\Big) \\
& \quad \quad\quad \quad - \tr \, \Big( A^{1/2}(0) D^2 u A^{1/2}(0) - \big[ (A^{1/2}(0) D^2 u A^{1/2}(0)) \nn \cdot \nn \big] \nn \otimes \nn \Big),
\end{split}
\]
from \eqref{signtrick2} and \eqref{aux:reason}, 
it follows that
\begin{equation}
\label{whon}
\begin{split}
\big|(A^{1/2}(0)D^2 u A^{1/2}(0)) \nn \cdot \nn \big|  &\leq \big|\tr \, \big(A^{1/2}(0)D^2 u A^{1/2}(0)\big)\big| + C\azero \\
&\leq -\tr(A(0) D^2 u) + C \azero + C \varepsilon |x| |D^2 u| + C \varepsilon |\nabla u|
\end{split}
\quad \text{ a.e. in }B_1.
\end{equation}
Combining \eqref{whon} and \eqref{ran0} yields the claimed inequality \eqref{key:hessian}.

\vspace{3mm}\noindent
\textbf{Step 2:}
\textit{We prove that
there is a universal $\varepsilon_0 > 0$ such that, if $\varepsilon \leq \varepsilon_0$, then
\[
\int_{B_1}  |D^2 u| |\nabla u|  \eta^2 \d x \leq 
C\int_{B_1} \anew |\nabla u| \, \eta^2 \d x
+ C \int_{B_1}  |\nabla u|^2 \big( |\nabla (\eta^2)| + \varepsilon \eta^2 \big) \d x 
\]
for all $\eta \in C^{\infty}_c(B_1)$,
where $C$ is universal.
} 

By uniform ellipticity, it suffices to estimate the integral $\int_{B_1} |D^2 u| |\nabla u|_{A(0)} \eta^2 \d x$.
Multiplying \eqref{key:hessian} in Step $1$ by $|\nabla u|_{A(0)} \eta^2$ and integrating in $B_1$, 
by uniform ellipticity
we have
\begin{equation}
\label{ran:1new}
\begin{split}
\int_{B_1}  |D^2 u| |\nabla u|_{A(0)} \eta^2 \d x &\leq - C \int_{B_1} |\nabla u|_{A(0)} \tr\big( A(0) D^2 u\big) \, \eta^2 \d x 
+C \int_{B_1} \azero |\nabla u|_{A(0)}\, \eta^2 \d x\\
&\quad \quad \quad  + C \varepsilon \int_{B_1} |x| |D^2 u| |\nabla u|_{A(0)} \eta^2 \d x+C \varepsilon \int_{B_1}  |\nabla u|^2  \, \eta^2\d x.
\end{split}
\end{equation}
The only delicate term in the right-hand side of \eqref{ran:1new} is 
the first one,
which can be treated as follows.

We write the product $-|\nabla u|_{A(0)} \tr(A(0) D^2 u )$ 
in $\{\nabla u \neq 0\}$ as
\begin{equation}
\label{more1}
\begin{split}
-2|\nabla u|_{A(0)} \tr\big(A(0) D^2 u\big) &= - |\nabla u|_{A(0)} \tr(A(0) D^2 u) - \div \left( |\nabla u|_{A(0)} A(0) \nabla u \right)\\
&\quad \quad  \quad  \quad + \nabla |\nabla u|_{A(0)} \cdot A(0) \nabla u.
\end{split}
\end{equation}
Since
\[
\begin{split}
\nabla |\nabla u|_{A(0)} \cdot A(0) \nabla u &= |\nabla u|_{A(0)}^{-1} \,   D^2 u A(0) \nabla u \cdot A(0) \nabla u \\
&=  |\nabla u|_{A(0) } (A^{1/2}(0) D^2 u A^{1/2}(0)) \nn \cdot \nn,
\end{split}
\]
by \eqref{more1} and using that $\nn$ is unitary, it follows that
\begin{equation}\label{more3}
\begin{split}
&-2|\nabla u|_{A(0)} \tr\big(A(0)D^2 u\big)\\
& = -|\nabla u|_{A(0)} \tr \Big(A^{1/2}(0)D^2 u A^{1/2}(0) - \big[\big(A^{1/2}(0)D^2 u A^{1/2}(0)\big)\nn \cdot \nn\big] \nn \otimes \nn \Big) \\
& \quad \quad \quad  - \div \big(|\nabla u|_{A(0)} A(0)\nabla u \big)
\end{split}
\end{equation}
a.e. in $B_1$.
By the bound \eqref{aux:reason} in the proof of Step $1$ above, it follows that
\[
\left| \tr \Big(A^{1/2}(0)D^2 u A^{1/2}(0) - \big[\big(A^{1/2}(0)D^2 u A^{1/2}(0)\big)\nn \cdot \nn\big] \nn \otimes \nn \Big)\right|
\leq C \azero \quad \text{ a.e. in } B_1,
\]
and hence from \eqref{more3} we deduce
\begin{equation}\label{more4}
\begin{split}
-2|\nabla u|_{A(0)} \tr\big(A(0) D^2 u\big) &\leq  - \div \big(|\nabla u|_{A(0)} A(0)\nabla u \big) +C  \azero |\nabla u|_{A(0)} \\
\end{split}
\quad \text{ a.e. in } B_1.
\end{equation}

Substituting \eqref{more4} in \eqref{ran:1new} leads to
\[
\begin{split}
\int_{B_1}  |D^2 u| |\nabla u|_{A(0)} \eta^2 \d x &\leq 
- C\int_{B_1}\div \big(|\nabla u|_{A(0)} A(0)\nabla u \big) \,\eta^2 \d x 
+C \int_{B_1} \azero |\nabla u|_{A(0)}\, \eta^2 \d x\\
&\quad \quad \quad + C \varepsilon \int_{B_1} |x| |D^2 u| |\nabla u|_{A(0)}  \, \eta^2\d x +C \varepsilon \int_{B_1}  |\nabla u|^2  \, \eta^2\d x,
\end{split}
\]
and integrating by parts the divergence term, we obtain the inequality
\begin{equation}
\label{somelul}
\begin{split}
\int_{B_1}  |D^2 u| |\nabla u|_{A(0)} \eta^2 \d x &\leq  C\int_{B_1}|\nabla u|^2 \, \left(|\nabla \eta^2| + \varepsilon \eta^2\right)\d x 
+C \int_{B_1} \azero |\nabla u|_{A(0)}\, \eta^2 \d x\\
&\quad \quad \quad  + C \varepsilon \int_{B_1} |x| |D^2 u| |\nabla u|_{A(0)}  \, \eta^2\d x.
\end{split}
\end{equation}

Since $|x| \leq 1$ in $B_1$, choosing $\varepsilon_0 > 0$ universal small such that $C \varepsilon_0 = 1/2$, we can absorb the ``Hessian times the gradient'' error in \eqref{somelul} into the left-hand side to obtain
\begin{equation}
\label{alle}
\begin{split}
\int_{B_1}  |D^2 u| |\nabla u|_{A(0)} \eta^2 \d x &\leq  C\int_{B_1}|\nabla u|^2 \, \left(|\nabla \eta^2| + \varepsilon \eta^2\right)\d x 
+C \int_{B_1} \azero |\nabla u|_{A(0)}\, \eta^2 \d x.
\end{split}
\end{equation}

To conclude the argument, let us show that $\anew$ and $\azero$ are comparable for $\varepsilon$ small.
Letting $E(x) = A(x) - A(0)$ and $M(x) = D^2 u(x) A^{1/2}(0)$, it is easy to check that
\[
\anew^2 = \azero^2 + \tr\left( M(x)^{T}E(x) M(x) \right) - (M(x)^{T} E(x) M(x)) \nn \cdot \nn \quad \text{ in } \{\nabla u \neq 0\},
\]
and for $x \in \{\nabla u \neq 0\}$, extending $\nn = \nn(x)$ to an orthonormal basis $\ee_1$, $\ldots$, $\ee_n = \nn$ of $\R^n$,
we can rewrite this identity as
\begin{equation}
\label{here}
\anew^2 = \azero^2 +  \sum_{i = 1}^{n-1} E(x) M(x) \ee_i \cdot M(x) \ee_i \quad \text{ in } \{\nabla u \neq 0\}.
\end{equation}
By the mean value theorem we can bound the error by $|E(x)| \leq \varepsilon |x|$, 
and hence, by uniform ellipticity,
\begin{equation}
\label{alle2}
\left| \sum_{i = 1}^{n-1} E(x) M(x) \ee_i \cdot M(x) \ee_i. \right| \leq \varepsilon |x| \sum_{i = 1}^{n-1} |M(x) \ee_i|^2 \leq C \varepsilon |x| \sum_{i = 1}^{n-1} |M(x) \ee_i|_{A(0)}^2.
\end{equation}
Since $|M(x) \ee_i|_{A(0)}^2 = |A^{1/2}(0) D^2 u(x) A^{1/2}(0) \ee_i|$,
by \eqref{aux:reason} above,
the sum in right-hand side of  \eqref{alle2} can be further bounded by
\begin{equation}
\label{alle3}
\sum_{i = 1}^{n-1} |M(x) \ee_i|_{A(0)}^2 \leq C \azero \quad \text{ a.e. in } B_1.
\end{equation}
Combining \eqref{alle2} and \eqref{alle3},
from \eqref{here} we conclude that
\[
\left(1 - C \varepsilon |x| \right) \azero^2 \leq \anew^2 \leq \left(1 + C \varepsilon |x|\right) \azero^2 \quad \text{ in } B_1,
\]
which was the inequality \eqref{ultimate} mentioned before the proof.
Choosing $\varepsilon_0$ smaller if necessary, we may assume that $\azero \leq 2 \anew$,
which applied in \eqref{somelul} yields the claim.

\vspace{3mm}\noindent
\textbf{Step 3:}
\textit{We prove that, 
if $\varepsilon \leq \varepsilon_0$, with $\varepsilon_0 > 0$ as in Step 1, then 
\[
\begin{split}
\displaystyle \int_{B_1}  \anew^{2}  \eta^2 \d x & \leq 
C \int_{B_1}  |\nabla u|^2 \big( |\nabla \eta|^2 + \varepsilon^2 \eta^2\big) \d x
\end{split}
\]
for all $\eta \in C^{\infty}_c(B_1)$,
where $C$ is a universal constant.
}

Combining 
\eqref{ineq:sz} in Theorem~\ref{thm:sz} and Step $2$, we have
\begin{equation}
\label{au8new}
\begin{split}
\int_{B_1}  \anew^2 \eta^2 \d x & \leq 
C \varepsilon \int_{B_1} \anew |\nabla u| \eta^2 \d x
+ \int_{B_1} |\nabla u|_{A(0)}^2 |\nabla \eta|_{A(x)}^2 \d x\\
&\quad \quad \quad + C \varepsilon \int_{B_1}  |\nabla u|^2 \big( |\nabla (\eta^2)| + \varepsilon \eta^2 \big) \d x.
\end{split}
\end{equation}
By Young's inequality,
the first term on the right-hand side of \eqref{au8new} can be bounded by
\[
C\varepsilon \int_{B_1} \anew |\nabla u| \eta^2 \d x \leq \dfrac{1}{2}\int_{B_1} \anew^{2} \eta^2 \d x + C \varepsilon^2 \int_{B_1}  |\nabla u|^{2} \eta^2 \d x,
\]
and the $\anew^2 \eta^2$ integral can be absorbed into the left-hand side.
By uniform ellipticity and 
the bound $\varepsilon |\nabla(\eta^2)| \leq |\nabla \eta|^2 + \varepsilon^2 \eta^2$, we deduce the claim.

\vspace{3mm}\noindent
\textbf{Step 4:}
\textit{Conclusion.}

Combining Steps $2$ and $3$, for $\varepsilon \leq \varepsilon_0$ as above and by Cauchy-Schwarz, we obtain
\begin{equation}
\label{ineq:hessgrad}
\begin{split}
&\int_{B_1} |D^2 u| |\nabla u|  \eta^2 \d x \\
&\quad \quad  \leq 
C \left(\int_{B_1} \anew^2 \eta^2\d x \right)^{1/2}\left( \int_{B_1} |\nabla u|^2 \eta^2 \d x\right)^{1/2}
+ C \int_{B_1} |\nabla u|^2 \big( |\nabla (\eta^2)| + \varepsilon \eta^2 \big) \d x \\
&\quad \quad  \leq 
C \left(\int_{B_1}|\nabla u|^2  \big( |\nabla \eta|^2 + \varepsilon^2 \eta^2 \big) \d x \right)^{1/2}\left( \int_{B_1} |\nabla u|^2 \eta^2 \d x\right)^{1/2}\\
& \quad \quad \quad \quad \quad  + C \int_{B_1} |\nabla u|^2 \big( |\nabla (\eta^2)| + \varepsilon \eta^2 \big) \d x.
\end{split}
\end{equation}

The inequalities \eqref{ineq:hessgrad1} and \eqref{ineq:hessgrad2} in Theorem~\ref{thm:sz} follow easily from \eqref{ineq:hessgrad} by choosing appropriate cut-off functions
and using that $\varepsilon$ is bounded by a universal constant $\varepsilon_0$.
Choosing
$\eta \in C^{\infty}_c(B_{1})$ such that $\eta= 1$ in $B_{3/4}$ and $0 \leq \eta \leq 1$ in $B_{1}$
leads to the estimate in balls \eqref{ineq:hessgrad1}.
The second estimate in annuli \eqref{ineq:hessgrad2}
follows by choosing
$\eta \in C^{\infty}_c(B_{1} \setminus \overline{B}_{1/8})$ with $\eta= 1$ in $B_{1/2} \setminus B_{1/4}$ and $0 \leq \eta \leq 1$ in $B_1$.
\end{proof}

\begin{remark}
\label{remark:l1hess}
By the proof above, 
we can also deduce an interior a priori estimate for the $L^1$ norm of the Hessian.
For this, assuming $\varepsilon$ to be small,
recalling that $\azero \leq C \anew$ by \eqref{ultimate},
and absorbing the Hessian term in Step 1, we obtain
\[
|D^2 u| \leq -C \div\left(A(0) \nabla u \right) + C \anew + C |\nabla u| \quad \text{a.e. in } B_1.
\]
Multiplying this inequality by a cut-off function and integrating by parts,
using the bound for $\anew$ in Step $3$ and applying Cauchy-Schwarz,
we deduce an estimate 
for the $L^1$ norm of the Hessian in terms of the $L^2$ norm of the gradient in a larger ball.
\end{remark}

We conclude this section by proving Proposition~\ref{prop:l2l1}.
To show that the $L^2$ norm of the gradient is controlled by the $L^1$ norm of the function in a larger ball,
we use the interpolation inequalities of Cabr\'{e}~\cite{CabreQuant}
combined with the Hessian estimates from Theorem~\ref{thm:sz}.
The errors in larger balls can then be absorbed thanks to a celebrated lemma of Simon~\cite{Simon}.
We recall the interpolation inequalities of Cabr\'{e} in Appendix~\ref{app:interpolation}
and Simon's lemma in Appendix~\ref{app:simon}.

\begin{proof}[Proof of Proposition~\ref{prop:l2l1}]
We cover $B_{1/2}$ 
(except for a set of measure zero) 
with a family of disjoint open cubes $Q_j$
of the same side-length and small enough so that $Q_j\subset B_{3/4}$.
The side-length and the number of cubes depend only on $n$.
Combining the interpolation inequalities of Propositions \ref{interpol} and \ref{nash}, rescaled from the unit cube to $Q_j$, 
with 
$\tilde\delta= \delta^{3/2}$ for a given $\delta\in(0,1)$,
we obtain
\begin{equation*}
\int_{Q_j}|\nabla u|^{2}dx \leq C\delta \int_{Q_j}\lvert D^2u\rvert |\nabla u| \,dx+ C\delta \int_{Q_j}|\nabla u|^2dx+C\delta^{-2-\frac{3n}{2}}\left( \int_{Q_j}|u|\,dx\right)^2.
\end{equation*} 
Since $Q_j\subset B_{3/4}$,
applying \eqref{ineq:hessgrad1} from Theorem~\ref{thm:sz}, for $\varepsilon \leq \varepsilon_0$  we have
\begin{equation*}
\int_{Q_j}|\nabla u|^{2}dx \leq C \delta \int_{B_1}|\nabla u|^2dx+C\delta^{-2-\frac{3n}{2}}\left( \int_{B_1}|u|\,dx\right)^2.
\end{equation*} 
Adding up these inequalities, we obtain
\begin{equation}\label{rofnew}
\|\nabla u\|_{L^{2}(B_{1/2})}^2 \le C \delta  \|\nabla u\|_{L^{2}(B_{1})}^2 + C\delta^{-2-\frac{3n}{2}} \|u\|_{L^{1}(B_{1})}^2 \quad \text{ for } \delta \in (0, 1) \text{ and } \varepsilon \leq \varepsilon_0.
\end{equation}

As explained in Section~\ref{section:prelim},
for $B_{\rho}(y) \subset B_1$, 
the function
$u^{y,\rho} :=u(y+\rho \, \cdot)$ 
is a stable solution to a semilinear equation with coefficients $A^{y,\rho} = A(y + \rho \, \cdot)$ and $\vv^{y,\rho} = \rho\, \vv(y + \rho \, \cdot)$.
In particular, since $\rho \leq 1$, for $\varepsilon \leq \varepsilon_0$ we have that
\[
\| D A^{y,\rho} \|_{C^0(\overline{B}_1)} + \| \vv^{y,\rho} \|_{C^0(\overline{B}_1)} \leq \rho \varepsilon \leq \varepsilon_0,
\]
and we can apply \eqref{rofnew} to $u^{y,\rho}$, which yields
\[
\begin{split}
\rho^{n+2}\int_{B_{\rho/2}(y)}|\nabla u|^2\,dx&\leq 
C\delta \rho^{n+2}\int_{B_{\rho}(y)}|\nabla u|^2\,dx
+ C\delta^{-2-\frac{3n}{2}}\left(\int_{B_{\rho}(y)}|u|\,dx\right)^2\\
& \leq C \delta \rho^{n+2}\int_{B_{\rho}(y)}|\nabla u|^2\,dx
+C\delta^{-2-\frac{3n}{2}}\left(\int_{B_{1}}|u|\,dx \right)^2.
\end{split}
\]
By Lemma \ref{lemma:simon} with
$\sigma(B):=\|\nabla u\|_{L^2(B)}^2$, 
the claim follows.
\end{proof}

\section{The $W^{1, 2 + \gamma}$ estimate}
\label{section:higher:int}

Here we prove the higher integrability estimate \eqref{eq:high:int} in Theorem~\ref{thm:main}.
The strategy of proof is the same as for the Laplacian in \cite{CabreFigalliRosSerra}.
First we bound the $L^{2+ \gamma}$ norm in terms of the $L^2$ norm of the gradient when the coefficients are small.
This will follow from a uniform estimate of the Dirichlet norm on level sets,
which relies on the Hessian estimates in Theorem~\ref{thm:sz}.

\begin{lemma}\label{lemma:higher:int}
Let $u \in C^{\infty}\big(\overline{B}_{1}\big)$ be a stable solution of 
$-L u = f(u)$ in $B_1$,
for some nonnegative function $f\in C^1(\R)$.
Assume that 
\[
\| D A \|_{C^0(\overline{B}_1)} + \| \vv \|_{C^0(\overline{B}_1)} \leq \varepsilon
\]
for some $\varepsilon > 0$.

Then there exists a universal constant $\varepsilon_0 > 0$ 
with the following property:
if $\varepsilon \leq \varepsilon_0$, then 
\[
\|\nabla u\|_{L^{2+\gamma}(B_{1/2})} \leq C \|\nabla u\|_{L^2(B_1)},
\]
where $\gamma > 0$ 
depends only on $n$,
and
$C$ is a universal constant.
\end{lemma}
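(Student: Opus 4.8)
The plan is to follow the strategy from \cite{CabreFigalliRosSerra} for the Laplacian, adapted to our variable-coefficient operator by means of the Hessian estimate \eqref{ineq:hessgrad1} in Theorem~\ref{thm:sz}. The starting point is a reverse-H\"older-type inequality for $\nabla u$ on superlevel sets of a linear functional of the gradient. Fix a unit vector $e \in \R^n$ and, for $t \in \R$, consider the function $w_t := (e \cdot \nabla u - t)_+$ (or more symmetrically $(e\cdot\nabla u - t)$ restricted to $\{e \cdot \nabla u > t\}$). Since $u$ solves $-Lu = f(u)$, differentiating in the direction $e$ gives $-L(e\cdot\nabla u) = f'(u)\, e\cdot\nabla u + (\partial_e a_{ij})u_{ij} + (\partial_e \vv_i)u_i$; the key observation, exactly as for the Laplacian, is that on the set $\{e\cdot\nabla u > t\}$ with $t \geq 0$ the ``bad'' term $f'(u)\, e\cdot\nabla u$ has a favourable sign (here we finally use $f \geq 0$, hence can take $f' \geq 0$ only after... — actually we use $f \geq 0$ together with the trick $|Lu| = |{-}f(u)| = -Lu$ as in \eqref{signtrick}, not $f' \geq 0$). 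The cleaner route, and the one I would take, is to avoid differentiating $f$ altogether and instead work directly with the Dirichlet-type energy of $w_t$ against a cutoff, integrating by parts so that only $\|DA\|_{C^0}$, $\|\vv\|_{C^0}$, and the quantity $|D^2 u||\nabla u|$ appear as errors.

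Concretely, first I would prove a uniform bound on the Dirichlet energy on level sets: for every unit $e$, every $t \in \R$, and a fixed cutoff $\zeta \in C^\infty_c(B_{3/4})$ with $\zeta \equiv 1$ on $B_{1/2}$,
\[
\int_{\{e\cdot\nabla u > t\} \cap B_{3/4}} |\nabla(e\cdot\nabla u)|^2 \zeta^2 \,\d x \leq C\int_{B_{3/4}} |D^2 u|\,|\nabla u| \,\d x + C\int_{B_1} |\nabla u|^2 \,\d x.
\]
This is obtained by testing the (differentiated) equation for $e\cdot\nabla u$ with $w_t \zeta^2$; the right-hand side is then controlled, using $\varepsilon \leq \varepsilon_0$ and \eqref{ineq:hessgrad1} from Theorem~\ref{thm:sz}, by $C\|\nabla u\|_{L^2(B_1)}^2$. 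Crucially the bound is \emph{independent of $t$ and of $e$}. This is the variable-coefficient analogue of Lemma~2.2 (and its consequences) in \cite{CabreFigalliRosSerra}, and \eqref{ineq:hessgrad1} is precisely the ingredient that makes the passage go through without any dependence on derivatives of $f$, $A$ beyond first order, or $\vv$ beyond zeroth order.

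From the level-set Dirichlet bound, the higher integrability follows by the device of \cite{CabreFigalliRosSerra}: apply the Sobolev inequality to $w_t \zeta$ to control $\|w_t\zeta\|_{L^{2^*}}$ by the uniform constant, integrate a suitable power of this estimate in $t$ (using the coarea-type / layer-cake identity relating $\int_0^\infty \|w_t\|_{L^q}^q \,\d t$ to $\|(e\cdot\nabla u)_+\|_{L^{q+1}}^{q+1}$ up to lower order), and sum over a finite set of directions $e$ spanning $\R^n$ to recover $\|\nabla u\|_{L^{2+\gamma}(B_{1/2})}$ on the left. One gets an inequality of the form $\|\nabla u\|_{L^{2+\gamma}(B_{1/2})}^{2+\gamma} \leq C \big(\sup_{e,t}(\text{Dirichlet bound})\big)^{\text{power}} \cdot \|\nabla u\|_{L^2(B_1)}^{\text{power}}$, which combined with the uniform bound above yields $\|\nabla u\|_{L^{2+\gamma}(B_{1/2})} \leq C\|\nabla u\|_{L^2(B_1)}$ with $\gamma = \gamma(n) > 0$ and $C$ universal. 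The exponent $\gamma$ and the details of the interpolation are exactly as in \cite{CabreFigalliRosSerra}; nothing there sees the coefficients.

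The main obstacle is the first step: carrying out the integration by parts in the equation for $e\cdot\nabla u$ tested against $w_t\zeta^2$ so that the error terms involve only $\|DA\|_{C^0}$ and $\|\vv\|_{C^0}$ (not $\|A\|_{C^2}$, $\|\vv\|_{C^1}$) \emph{and} reduce to the controllable quantities $|D^2u||\nabla u|$ and $|\nabla u|^2$. As in the proof of \eqref{ineq:sz}, one must move the derivative off the coefficients by an integration by parts — e.g. the term $\int (\partial_e a_{ij}) u_{ij} w_t \zeta^2$ is rewritten, after using $\nabla w_t = D^2u\, e$ on $\{e\cdot\nabla u>t\}$, as boundary-free integrals of $a_{ij}$ against $\partial_e(u_{ij}w_t\zeta^2)$, producing $|D^2u|^2$ terms that must be absorbed. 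The delicate point is that $|D^2u|^2$ is genuinely borderline (only $|D^2u||\nabla u| \in L^1$ is available from Theorem~\ref{thm:sz}, not $|D^2u|^2$), so the absorption has to be done against the leading term $\int|\nabla w_t|^2\zeta^2 \leq \int|D^2u|^2\zeta^2$ on the left, which is only legitimate after first establishing $D^2u \in L^2_{\mathrm{loc}}$ — itself a standard consequence of $u\in C^\infty(\overline{B}_1)$ in the statement, so for the a priori estimate this regularity is free, and one just needs the constant in the absorption to be universal (using $\varepsilon \leq \varepsilon_0$ and $|x|\leq 1$). Once that bookkeeping is set up correctly, the rest is the $f$-free machinery of \cite{CabreFigalliRosSerra}.
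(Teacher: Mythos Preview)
Your route through level sets of $e\cdot\nabla u$ is genuinely different from the paper's, and it has a real gap at the step you yourself flag as delicate. Testing the differentiated equation for $e\cdot\nabla u$ against $w_t\zeta^2$ produces the term $\int f'(u)\,(e\cdot\nabla u)\,w_t\,\zeta^2$, and there is no hypothesis on the sign of $f'$ (only $f\geq 0$). Your parenthetical already concedes this, but the ``cleaner route'' you then propose---working directly with $\int|\nabla w_t|^2\zeta^2$ and integrating coefficient derivatives by parts---does not remove the $f'(u)$ term: it arises from differentiating $f(u)$, not from the coefficients, and no integration by parts touches it. Trying to control it via the stability inequality~\eqref{stable:int} with $\xi=w_t\zeta$ only reproduces $\int_{\{e\cdot\nabla u>t\}}|\nabla(e\cdot\nabla u)|_{A(x)}^2\zeta^2$ on the right-hand side with coefficient essentially~$1$, so nothing can be absorbed. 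A secondary issue: even granting your Step~1, the layer-cake step as written diverges, since a bound uniform in~$t$ integrated over $t\in(0,\infty)$ gives $+\infty$; a weight in~$t$ is required, and you have not supplied one.

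The paper never differentiates the equation here. It works with level sets of $u$ itself and uses only the pointwise algebraic bound $|\div(|\nabla u|\,\nabla u)|\leq C\,|D^2u|\,|\nabla u|$, which together with~\eqref{ineq:hessgrad1} gives $\|\div(|\nabla u|\,\nabla u)\|_{L^1(B_{3/4})}\leq C\|\nabla u\|_{L^2(B_1)}^2$. The divergence theorem on $\{u>t\}$ then yields, for a.e.\ $t\in\R$,
\[
\int_{\{u=t\}\cap B_{1/2}}|\nabla u|^2\,\d\mathcal{H}^{n-1}\leq C\|\nabla u\|_{L^2(B_1)}^2,
\]
with no $f$ or $f'$ anywhere. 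Higher integrability follows from the coarea formula and a weighted H\"older inequality: setting $v=(u-(u)_{B_1})/\|\nabla u\|_{L^2(B_1)}$, Sobolev--Poincar\'e gives $\int_{B_1}|v|^p\leq C$ for some dimensional $p>2$, and one splits $\int_{B_{1/2}}|\nabla v|^{3-3\theta}$ via coarea into a factor controlled by the level-set bound above and a factor controlled by $\int|v|^p$, with the weight $h(t)=\max\{1,|t|\}$ making the $t$-integration converge. This is the device of~\cite{CabreFigalliRosSerra} you allude to; note that it slices by values of $u$, not of a directional derivative.
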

\begin{proof}
The proof is divided in two steps.

\vspace{3mm}\noindent
\textbf{Step 1:}
{\it We prove that, if $\varepsilon \leq \varepsilon_0$, then for a.e. $t \in \R$ we have}
\[
\int_{\{u = t\} \cap B_{1/2}} |\nabla u|^2 \d \mathcal{H}^{n-1} \leq  
C  \|\nabla u\|_{L^2(B_{1})}^2,
\]
{\it where $\varepsilon_0 > 0$ and $C$ are universal.}

Since $\left|\div\big( |\nabla u| \nabla u\big) \right| \leq C |D^2 u|  |\nabla u|$,
by \eqref{ineq:hessgrad1} in Theorem~\ref{thm:sz}, for $\varepsilon \leq \varepsilon_0$
we have
\begin{equation}
\label{new:stepnew}
\begin{split}
\left\|\div\big( |\nabla u| \nabla u\big)\right\|_{L^1(B_{3/4})} 
\leq C  \|\nabla u\|_{L^2(B_{1})}^2.
\end{split}
\end{equation}
Consider a cut-off function $\eta\in C^{\infty}_c(B_{3/4})$ with $\eta = 1$ in $B_{1/2}$ and $0 \leq \eta \leq 1$.
By the divergence theorem, for a.e. $t \in \R$ we have
\[\begin{split}
\int_{\{u = t\} \cap B_{1/2}} |\nabla u|^2 \d \mathcal{H}^{n-1} &\leq 
\int_{\{u = t\} \cap B_{1} \cap \{\nabla u \neq 0\}}  |\nabla u|^2 \eta^2 \d \mathcal{H}^{n-1}\\
&= - \int_{\{u > t\} \cap B_{1} \cap \{\nabla u \neq 0\}} \div\big(|\nabla u| \nabla u \, \eta^2 \big) \d x \\
&\leq \int_{B_1} |\nabla u|^2 |\nabla (\eta^2)| \d x + \int_{B_1} \big|\div\big(|\nabla u| \nabla u\big)\big| \eta^2 \d x
\end{split}
\]
and applying \eqref{new:stepnew} we obtain the claim.

\vspace{3mm}\noindent
\textbf{Step 2:}
{\it Conclusion.}

Let
\[
v := \dfrac{u - (u)_{B_1}}{\|\nabla u\|_{L^2(B_1)}},
\]
where
$(u)_{B_1} := \displaystyle\frac{1}{|B_1|}\int_{B_1} u \d x$.
In particular $\|\nabla v\|_{L^2(B_1)} = 1$
and by the Sobolev-Poincaré inequality, for some dimensional $p > 2,$ we have
\begin{equation}\label{sobolev:poincarenew}
\left( \int_{B_1} |v|^p \d x \right)^{\frac{1}{p}} \leq C \left( \int_{B_1} |\nabla v|^2 \d x \right)^{\frac{1}{2}} = C.
\end{equation}
By the coarea formula and \eqref{sobolev:poincarenew}, we have
\begin{equation}\label{coarea:applicationnew}
\begin{split}
&\int_{\R}\d t \int_{\{v = t\}\cap \{|\nabla v| \neq 0\}} |t|^{p}|\nabla v|^{-1} \d \mathcal{H}^{n-1} = \int_{B_1 \cap \{|\nabla v| \neq 0\}} |v|^{p}  \d x \leq C.
\end{split}
\end{equation}
Since $p > 2,$ we can choose dimensional constants $q > 1$ and $\theta \in (0,1/3)$ such that $p/q = (1-\theta)/\theta.$
We define
\[h(t) := \max\{1, |t|\}.\]
Using the coarea formula and the H\"{o}lder inequality (note that $p\theta - q (1-\theta) = 0$), we obtain
\[
\begin{split}
\int_{B_{1/2}} |\nabla v|^{3 - 3\theta} \d x &= \int_{\R}\d t \int_{\{v = t\}\cap B_{1/2} \cap \{|\nabla v| \neq 0\}} h(t)^{p\theta - q(1-\theta)}|\nabla v|^{-\theta + 2(1-\theta)} \d x \\
&\leq \left(\int_{\R}\d t \int_{\{v = t\}\cap B_{1} \cap \{|\nabla v| \neq 0\}} h(t)^{p}|\nabla v|^{-1} \d \mathcal{H}^{n-1} \right )^{\theta} \cdot \\
& \quad \quad \quad \quad \quad \quad \quad  \cdot \left(\int_{\R}\d t \int_{\{v = t\}\cap B_{1/2}} h(t)^{-q}|\nabla v|^{2} \d \mathcal{H}^{n-1} \right )^{1-\theta}.
\end{split}
\]
Thanks to \eqref{coarea:applicationnew} and the definition of $h(t),$ we have
\[
\begin{split}
&\int_{\R}\d t \int_{\{v = t\}\cap B_{1} \cap \{|\nabla v| \neq 0\}} h(t)^{p}|\nabla v|^{-1} \d \mathcal{H}^{n-1}\\
&\leq \int_{-1}^{1} \d t \int_{\{v = t\}\cap B_{1} \cap \{|\nabla v| \neq 0\}} |\nabla v|^{-1} \d \mathcal{H}^{n-1} + C \leq |B_1| + C \leq C.
\end{split}
\]
Since $q > 1,$
it follows that $\int_{\R}h(t)^{-q} \d t$ is finite
and by Step 1, for $\varepsilon \leq \varepsilon_0$, we have
\[
\begin{split}
&\int_{\R}\d t \, h(t)^{-q} \int_{\{v = t\}\cap B_{1/2}} |\nabla v|^2 \d \mathcal{H}^{n-1} \leq C.
\end{split}
\]
Finally, we obtain
\[
\begin{split}
&\int_{B_{1/2}} |\nabla v|^{3-3\theta} \d x \leq C
\end{split}
\]
which gives the claim, since $\nabla v \equiv \nabla u/ \|\nabla u\|_{L^2(B_1)}$.
\end{proof}

To deduce the $L^{2+\gamma}$ estimate \eqref{eq:high:int} in Theorem~\ref{thm:main}, we will combine Proposition~\ref{prop:l2l1} with Lemma~\ref{lemma:higher:int}, 
and apply a scaling and covering argument.

\begin{proof}[Proof of \eqref{eq:high:int} in Theorem~\ref{thm:main}]
Combining Proposition~\ref{prop:l2l1}
and Lemma~\ref{lemma:higher:int}, applied to $u(\cdot/2)$,
we deduce that there is a universal $\varepsilon_0 > 0$ such that, if $\varepsilon \leq \varepsilon_0$, then 
\begin{equation}
\label{asd}
\|\nabla u\|_{L^{2 + \gamma}(B_{1/4})} \leq C \|u\|_{L^1(B_1)},
\end{equation}
where $\gamma > 0$ depends only on $n$, and $C$ is universal.

Now \eqref{eq:high:int} will follow easily from \eqref{asd} by a scaling and covering argument.
Let $\rho \in (0, 1)$ to be chosen later.
We cover the ball $B_{1/2}$ by a finite number of balls $B_{\rho/4}(y_j)$
with
$B_{\rho}(y_j) \subset B_1$.
The number balls depends only on $n$ and $\rho$.
As explained in Section~\ref{section:prelim},
the functions $u^{y_j, \rho} = u(y_j + \rho \, \cdot)$
are stable solutions to a semilinear equation with coefficients 
$A^{y_j, \rho} = A(y_j + \rho \, \cdot)$ and  $\vv^{y_j, \rho} = \rho \, \vv(y_j + \rho \, \cdot)$.
Choosing $\rho$ small enough so that 
$\rho \big(\|D A\|_{C^0(\overline{B}_1)} +\|\vv\|_{C^0(\overline{B}_1)}\big) \leq \varepsilon_0$,
it follows that
\[
\|D A^{y_j, \rho}\|_{C^0(\overline{B}_1)} +\|\vv^{y_j, \rho}\|_{C^0(\overline{B}_1)} \leq  \varepsilon_0,
\]
and we can apply \eqref{asd}
to each $u^{y_j, \rho}$,
which yields
\[
\|\nabla u\|_{L^{2+\gamma}(B_{1/2})} 
\leq \sum_{j} \|\nabla u\|_{L^{2+\gamma}(B_{\rho/4}(y_{j}))} 
\leq C \sum_{j}\|u\|_{L^1(B_{\rho}(y_j))}
\leq  C \|u\|_{L^1(B_1)},
\]
for some $C$ depending only on 
$n$, $\elliptic$, $\bounded$, and $\rho$.
Since $\rho$ depends only on 
$\|D A\|_{C^0(\overline{B}_1)}$, $\|\vv\|_{C^0(\overline{B}_1)}$,
and $\varepsilon_0$, which is universal,
this concludes the proof.
\end{proof}

\section{The weighted $L^2$ estimate for radial derivatives}
\label{section:radial}

Our goal in this section is to prove Proposition~\ref{prop:rad}, 
where we bound the weighted $L^2$ norm of the radial derivative in balls
by the $L^2$ norm of the full gradient in annuli.
To prove the estimate, 
we will first apply the integral stability inequality with the test functions 
\[
\cc(x) = x \cdot \nabla u \quad \text{ and } \quad \eta = |x|_{A^{-1}(0)}^{\frac{2-n}{2}} \zeta,
\]
where $\zeta$ is a cut-off.
Our choice will yield the desired bound with an additional error term involving a weighted integral of the ``Hessian times the gradient'',
which we will be able to treat thanks to the a priori estimates on annuli from Theorem~\ref{thm:sz}.

We start by choosing $\cc = x \cdot \nabla u$ and a generic test function $\eta$
in the integral stability inequality:

\begin{lemma}\label{lemma:int:radial}
Let $u \in C^{\infty}\big(\overline{B}_{1}\big)$ be a stable solution of 
$- L u = f(u)$ in $B_1$,
for some function $f \in C^1(\R)$.
Assume that 
\[
\| D A \|_{C^0(\overline{B}_1)} + \| \vv \|_{C^0(\overline{B}_1)} \leq \varepsilon
\]
for some $\varepsilon > 0$.

Then
\[
\begin{split}
& \int_{B_1} |\nabla u|_{A(x)}^2 \Big((n-2)\eta^2 +  x\cdot \nabla (\eta^2)\Big) \d x \\
&+ \int_{B_1} \Big( -2 (x\cdot \nabla u) A(x) \nabla u \cdot \nabla (\eta^2)-|x \cdot \nabla u|^2 |\nabla \eta|_{A(x)}^2 \Big) \d x\\
& \quad \quad \quad  \quad \quad
\leq C \varepsilon \int_{B_1} |D^2 u| |\nabla u| |x|^2 \eta^2 \d x\\
& \quad \quad \quad  \quad \quad \quad \quad \quad 
+ C \varepsilon \int_{B_1} |\nabla u|^2 \Big( |x|^2 |\nabla(\eta^2)| + \left(|x| + |x|^2 \varepsilon \right) \eta^2 \Big) \d x
\end{split}
\]
for all $\eta \in C_c^{\infty}(B_1)$,
where $C$ is a universal constant.
\end{lemma}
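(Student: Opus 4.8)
The plan is to apply the integral stability inequality \eqref{stable:jacobi} with the test function
\[
\cc(x) = x \cdot \nabla u
\]
and a generic cut-off $\eta \in C^\infty_c(B_1)$. Since $u \in C^\infty(\overline{B}_1)$, the function $\cc$ is smooth, so \eqref{stable:jacobi} applies directly, with no regularization needed (unlike in the proof of \eqref{ineq:sz}). The first step is to compute $\jacobi \cc$ and check it is a ``good'' expression. Using $\partial^2_{ij}(x\cdot\nabla u) = 2u_{ij} + x_k u_{ijk}$, differentiating $-Lu = f(u)$ in the $x_k$-direction to rewrite $x_k a_{ij}(x) u_{ijk}$ in terms of $x \cdot \nabla(Lu) = -f'(u)\,\cc$ plus lower order terms, and noticing that the terms $f'(u)\,\cc$ and $x_k \vv_i(x) u_{ik}$ cancel, I expect to obtain
\[
\jacobi \cc = 2\tr\!\big(A(x)D^2u\big) - x_k\,\partial_k a_{ij}(x)\,u_{ij} - x_k\,\partial_k \vv_i(x)\,u_i + \vv_i(x)\,u_i.
\]
Of the last three terms, the first two are genuinely of size $C\varepsilon$ by $\|DA\|_{C^0} + \|\vv\|_{C^0} \le \varepsilon$, so after multiplying by $\cc\eta^2$ and using $|\cc| \le |x||\nabla u|$ they can be absorbed into the error terms of the claimed inequality (the one carrying $u_{ij}$ picks up a weight $|x|^2$). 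The delicate term is $-x_k\,\partial_k\vv_i\,u_i$, since we do not control $\|\vv\|_{C^1}$; this one must be integrated by parts.

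Next I would treat the main term $2\int_{B_1}\tr(A(x)D^2u)(x\cdot\nabla u)\eta^2\,\d x$. Writing $a_{ij}u_{ij} = \partial_i(a_{ij}u_j) - \partial_i a_{ij}\,u_j$ and integrating the divergence part by parts moves one derivative onto $(x\cdot\nabla u)\eta^2$; then, using $\partial_i(x\cdot\nabla u) = u_i + x_k u_{ik}$, the identity $2a_{ij}u_{ik}u_j = \partial_k|\nabla u|^2_{A(x)} - \partial_k a_{ij}\,u_iu_j$, and one more integration by parts against the radial field $x_k\partial_k$, I expect to produce exactly
\[
(n-2)\!\int_{B_1}\!|\nabla u|^2_{A(x)}\eta^2\,\d x + \int_{B_1}\!|\nabla u|^2_{A(x)}\,x\cdot\nabla(\eta^2)\,\d x - 2\!\int_{B_1}\!(x\cdot\nabla u)\,A(x)\nabla u\cdot\nabla(\eta^2)\,\d x,
\]
plus terms carrying one derivative of $A$, hence of size $C\varepsilon$ (again weighted by $|x|$). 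These three terms are precisely the ``main part'' of the left-hand side of the lemma; the missing piece $-|x\cdot\nabla u|^2|\nabla\eta|^2_{A(x)}$ will come from the right-hand side of \eqref{stable:jacobi}.

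For the drift contribution I would integrate by parts as
\[
-\int_{B_1} x_k\,\partial_k\vv_i\,u_i\,(x\cdot\nabla u)\,\eta^2\,\d x = \int_{B_1} \vv_i\,\partial_k\!\big(x_k\,u_i\,(x\cdot\nabla u)\,\eta^2\big)\,\d x,
\]
expand the divergence into its five terms, and estimate each using $\|\vv\|_{C^0}\le\varepsilon$ and $|x\cdot\nabla u| \le |x||\nabla u|$: the two terms containing a second derivative of $u$ acquire a weight $|x|^2$ and fall into $C\varepsilon\int|D^2u||\nabla u||x|^2\eta^2$, one term reproduces the weight $|x|^2|\nabla(\eta^2)|$, and the remaining two give $C\varepsilon\int|\nabla u|^2|x|\eta^2$. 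Finally, for the right-hand side of \eqref{stable:jacobi}, since $\bb = \vv - \div A$ satisfies $|\bb| \le C\varepsilon$ by \eqref{def:b}, expanding the square gives
\[
\Big|\nabla\eta - \tfrac12\eta A^{-1}(x)\bb(x)\Big|^2_{A(x)} = |\nabla\eta|^2_{A(x)} - \eta\,\bb(x)\cdot\nabla\eta + \tfrac14\eta^2\,A^{-1}(x)\bb(x)\cdot\bb(x) \le |\nabla\eta|^2_{A(x)} + C\varepsilon|\eta||\nabla\eta| + C\varepsilon^2\eta^2,
\]
so $\int_{B_1}\cc^2|\nabla\eta - \tfrac12\eta A^{-1}\bb|^2_{A(x)}\,\d x$ equals $\int_{B_1}|x\cdot\nabla u|^2|\nabla\eta|^2_{A(x)}\,\d x$ plus errors of the admissible form. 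Collecting everything, and moving $\int_{B_1}|x\cdot\nabla u|^2|\nabla\eta|^2_{A(x)}$ to the left-hand side, yields the lemma.

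The main obstacle is the appearance of the derivative $\partial_k\vv_i$ of the drift, which is not controlled by our hypotheses; this is what forces the extra integration by parts, and it is the reason the statement is phrased with the weights $|x|$ and $|x|^2$ rather than in a cleaner, un-weighted form. Apart from that, the proof is careful bookkeeping of lower order terms: one must track the powers of $|x|$ generated by the two radial integrations by parts and check that no boundary terms arise, which is guaranteed since $\eta$ has compact support in $B_1$.
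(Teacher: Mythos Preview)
Your proposal is correct and follows essentially the same route as the paper: test \eqref{stable:jacobi} with $\cc = x\cdot\nabla u$, compute $\jacobi\cc$ via the radial derivative of the equation, carry out the Pohozaev-type integration by parts on the main term $2\tr(A(x)D^2u)\,\cc\,\eta^2$, and integrate the drift term $-x_k\,\partial_k\vv_i\,u_i$ by parts to avoid $\|\vv\|_{C^1}$. One small slip: where you write ``the first two'' of the last three terms are of size $C\varepsilon$, you clearly mean the first and the third (namely $-x_k\,\partial_k a_{ij}\,u_{ij}$ and $\vv_i\,u_i$), since you then correctly single out $-x_k\,\partial_k\vv_i\,u_i$ as the delicate one.
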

\begin{proof}
Throughout the proof, $C$ denotes a generic universal constant.
Testing the integral stability inequality \eqref{stable:jacobi} with $\eta$ and $\cc = x \cdot \nabla u$,
we deduce
\begin{equation}
\label{bat}
\begin{split}
\int_{B_1}
(x\cdot \nabla u)  \jacobi (x \cdot \nabla u) \, 
\eta^2
\d x 
&\leq \int_{B_1} 
|x \cdot \nabla u|^2 \left|\nabla \eta-{\textstyle\frac{1}{2}} \eta A^{-1}(x) \bb(x)\right|^2_{A(x)}
\d x.
\end{split}
\end{equation}
The quadratic term on the right-hand side of \eqref{bat} can be bounded by
\[
\left|\nabla \eta-{\textstyle\frac{1}{2}} \eta A^{-1}(x) \bb(x)\right|^2_{A(x)} 
\leq |\nabla \eta|^2_{A(x)} +C\varepsilon |\nabla (\eta^2)| + C \varepsilon^2 \eta^2,
\]
and hence
\begin{equation}\label{int:rad:1}
\begin{split}
\int_{B_1} (x\cdot \nabla u) 
\jacobi (x \cdot \nabla u) \,  \eta^2
\d x 
&\leq \int_{B_1} 
|x \cdot \nabla u|^2 
|\nabla \eta|_{A(x)}^2 \d x\\
& \quad \quad + C \varepsilon \int_{B_1} 
|\nabla u|^2 |x|^2
\big( |\nabla (\eta^2)| + \varepsilon \eta^2 \big) 
\d x.
\end{split}
\end{equation}

To compute the Jacobi operator $\jacobi (x \cdot \nabla u) = L (x \cdot \nabla u) + f'(u) (x \cdot \nabla u)$,
we differentiate the equation $-L u = f(u)$ in the direction of $x$,
which yields
\begin{equation}
\label{x:der}
- x \cdot \nabla ( Lu ) = f'(u) \left (x \cdot \nabla u\right ),
\end{equation}
and hence
\begin{equation}
\label{bef}
\begin{split}
\jacobi(x \cdot \nabla u) 
&= L \left (x \cdot \nabla u\right ) - x \cdot \nabla \left (L u\right ) \\
&= 2a_{ij}(x) u_{ij} - x_k \partial_k a_{ij}(x) u_{ij} + \vv_i(x) u_i - x_j \partial_j \vv_i(x) u_i.
\end{split}
\end{equation}
From \eqref{bef}, by the coefficient bounds, it follows that
\begin{equation}
\label{bef2}
\begin{split}
(x \cdot \nabla u)\jacobi(x \cdot \nabla u)  &\geq 2 x_k u_k a_{ij}(x) u_{ij}  - x_k u_k x_j \partial_j \vv_i(x) u_i \\
& \quad \quad \quad - C \varepsilon |x|^2 |D^2 u | |\nabla u| - C \varepsilon |x| |\nabla u|^2.
\end{split}
\end{equation}

The idea now is to integrate by parts to get rid of the highest order terms on the left-hand side of \eqref{int:rad:1}.
For this we must rewrite the term $2 x_k u_k \, a_{ij}(x) u_{ij}$ in \eqref{bef2} in divergence form.
By the chain rule, we have
\begin{equation}
\label{ab1}
x_{k} u_{k} \, a_{ij}(x) u_{ij}= \partial_i\big(x_{k} u_{k}  \,  a_{ij}(x) u_{j}\big) - x_{k} u_{k} \partial_i a_{ij}(x) u_{j} - a_{ij}(x)u_{i}u_{j} - x_{k}a_{ij}(x)u_{ik} u_{j}.
\end{equation}
Using that $a_{ij}(x) = a_{ji}(x)$,
the last term in \eqref{ab1} can be written as
\[
x_{k}a_{ij}(x)u_{ik} u_{j} = \frac{1}{2} \partial_k \big(a_{ij}(x)u_i u_j x_{k}\big) - \frac{n}{2} a_{ij}(x) u_i u_j - \frac{1}{2} x_{k}\partial_k a_{ij}(x) u_i u_j,
\]
and hence
\begin{equation}\label{a:expression}
\begin{split}
2 x_{k}u_k \, a_{ij}(x)u_{ij}&
= \partial_i \Big(2 x_{k} u_{k} \, a_{ij}(x) u_{j} - a_{jk}(x)u_j u_k x_{i}\Big) 
+ (n-2) a_{ij}(x)u_{i}u_{j} \\
&\quad \quad  - 2 x_{k} u_{k} \partial_i a_{ij}(x) u_{j} 
+ x_{k}\partial_k a_{ij}(x) u_i u_j \\
& \geq \div\left(2(x\cdot \nabla u) A(x)\nabla u - |\nabla u|^2_{A(x)} x\right) + (n-2) |\nabla u|^2_{A(x)} \\
& \quad \quad - C \varepsilon |x| |\nabla u|^2,
\end{split}
\end{equation}
where in the last inequality we have used the estimates for the coefficients.
Combining \eqref{a:expression} and \eqref{bef2},
we obtain
\begin{equation}\label{rad:1}
\begin{split}
& (x \cdot \nabla u) \jacobi (x \cdot \nabla u)\\
&\quad \quad  \geq \div\left(2(x\cdot \nabla u) A(x)\nabla u - |\nabla u|^2_{A(x)} x\right) + (n-2) |\nabla u|^2_{A(x)} \\
& \quad \quad \quad \quad \quad  - x_k u_k x_j \partial_j \vv_i(x) u_i - C \varepsilon |x|^2 |D^2 u | |\nabla u| - C \varepsilon |x| |\nabla u|^2.
\end{split}
\end{equation}

Multiplying \eqref{rad:1} by $\eta^2$ and integrating, the third term $-\int_{B_1} x_k u_k x_j \partial_j \vv_i(x) u_i \eta^2 \d x$ can be integrated by parts
and estimated by
\begin{equation}
\label{errorbd3}
\begin{split}
\left|-\int_{B_1} x_k u_k x_j \partial_j \vv_i(x) u_i \eta^2 \d x \right| &=  \left|\int_{B_1}  \vv_i(x) \partial_j (x_k u_k x_j u_i  \eta^2)  \d x\right|\\
& \leq C \varepsilon \int_{B_1}  |D^2 u | |\nabla u| |x|^2 \eta^2 \d x \\
&\quad\quad\quad + C \varepsilon \int_{B_1}  |\nabla u|^2 \left(  |x|^2 |\nabla (\eta^2)| + |x| \eta^2 \right)\d x.
\end{split}
\end{equation}
Substituting \eqref{rad:1} in the inequality \eqref{int:rad:1},
rearranging terms and
by the error bound \eqref{errorbd3},
it follows that
\begin{equation}\label{int:rad:2}
\begin{split}
&\int_{B_1} \left((n-2) |\nabla u|^2_{A(x)} \eta^2 - |x \cdot \nabla u|^2 |\nabla \eta|^2_{A(x)} \right)\d x\\
&+ \int_{B_1} \div\Big(2(x\cdot \nabla u) A(x)\nabla u - |\nabla u|^2_{A(x)} x\Big) \eta^2 \d x \\
& \quad \leq 
C \varepsilon \int_{B_1} |D^2 u | |\nabla u| |x|^2 \eta^2 \d x
+ C  \varepsilon \int_{B_1} |\nabla u|^2 \Big( |x|^2 |\nabla(\eta^2)| + (|x| + \varepsilon |x|^2) \eta^2 \Big) \d x.
\end{split}
\end{equation}
Integrating by parts the divergence term on the left-hand side of \eqref{int:rad:2} yields the claim.
\end{proof}

\begin{remark}
\label{rad:lessreg}
In \eqref{x:der} we took a derivative of the equation in the $x$ direction
to get rid of the dependence on the nonlinearity.
Instead, we could have multiplied the equation 
by the test function $\xi = \div \left( x \, (x \cdot \nabla u)  \eta^2 \right)$
and integrated by parts.
Thanks to this, we avoid having to take any derivatives of $\vv$, 
since the term involving it can be bounded directly.
Notice also that we need $u$ to have three derivatives to be able to compute $L \cc$.
This is the same phenomenon as in the proof of Theorem~\ref{thm:sz};
see the discussion in Remark~\ref{sz:lessreg}.
\end{remark}

\begin{remark}
\label{remark:azero}
Since $|A(x) - A(0)| \leq C \varepsilon |x|$,
the inequality in Lemma~\ref{lemma:int:radial} also holds
if we replace $A(x)$ by the constant matrix $A(0)$
and we add an additional error term $C \varepsilon \int_{B_1} |\nabla u|^2 |x|^3 |\nabla \eta|^2 \d x$
on the right-hand side.
For future use, the final estimate involving $A(0)$ instead of $A(x)$ reads as
\begin{equation}
\label{radeasy}
\begin{split}
& \int_{B_1} |\nabla u|_{A(0)}^2 \Big((n-2)\eta^2 +  x\cdot \nabla (\eta^2)\Big) \d x \\
&+ \int_{B_1} \Big( -2 (x\cdot \nabla u) A(0) \nabla u \cdot \nabla (\eta^2)-|x \cdot \nabla u|^2 |\nabla \eta|_{A(0)}^2 \Big) \d x\\
& \quad \quad \quad  \quad \quad
\leq C \varepsilon \int_{B_1} |D^2 u| |\nabla u| |x|^2 \eta^2 \d x \\
& \quad \quad \quad  \quad \quad \quad \quad \quad + C \varepsilon \int_{B_1} |\nabla u|^2 \Big(|x|^{3} |\nabla \eta|^2  + (|x| + \varepsilon |x|^2) \eta^2 \Big) \d x,
\end{split}
\end{equation}
where
we have used that $|x|^2 |\nabla (\eta^2)| \leq |x|^3 |\nabla \eta|^2 + |x| \eta^2$.
\end{remark}

Next, we choose the singular test function $\eta = |x|_{A^{-1}(0)}^{-a/2} \zeta$ in Lemma~\ref{lemma:int:radial},
where the exponent $a \geq 0$ will satisfy $a \leq n-2$ when $n \leq 9$.
Recall our notation 
for the modulus of the position vector and the radial derivative
\[
r = |x| \quad \text{ and } \quad  u_r = \dfrac{x}{|x|} \cdot \nabla u.
\]

\begin{lemma}
\label{lemma:prerad}
Let $u \in C^{\infty}\big(\overline{B}_{1}\big)$ 
be a stable solution of 
$- L u = f(u)$ in $B_1$,
for some function $f \in C^1(\R)$.
Assume that 
\[
\| D A \|_{C^0(\overline{B}_1)} + \| \vv \|_{C^0(\overline{B}_1)} \leq \varepsilon
\]
for some $\varepsilon > 0$.

If
\begin{equation}
\label{range}
0 \leq a \leq \min\{10, n\} - 2,
\end{equation}
then
\begin{equation}
\label{rad:almost}
\begin{split}
&(n-2-a) \int_{B_{\rho}}  r^{-a} |\nabla u|^2 \d x + \frac{a(8-a)}{4} \int_{B_{\rho}} r^{-a} u_r^2 \d x\\
& \quad \quad \quad  \leq  C \int_{B_{2\rho} \setminus B_{\rho}} r^{-a} |\nabla u|^2 \d x  +C \varepsilon \int_{B_{2\rho}} r^{2-a} |D^2 u| |\nabla u|  \d x \\
& \quad \quad \quad \quad \quad \quad  
 + C \varepsilon \int_{B_{2\rho}} \big( r^{1-a} + \varepsilon r^{2-a}\big) |\nabla u|^2 \d x.
\end{split}
\end{equation}
for all $\rho \leq 1/2$,
where $C$ is a universal constant.
\end{lemma}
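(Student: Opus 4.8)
The plan is to substitute into inequality \eqref{radeasy} from Remark~\ref{remark:azero} the test function $\eta = w^{-a/2}\zeta$, where $w(x) := |x|_{A^{-1}(0)} = (A^{-1}(0)x\cdot x)^{1/2}$ and $\zeta \in C^\infty_c(B_{2\rho})$ is a cut-off with $\zeta\equiv 1$ on $B_\rho$, $0\le\zeta\le1$ and $|\nabla\zeta|\le C/\rho$. Since $a\le\min\{10,n\}-2<n$, the weight $w^{-a}$ is locally integrable, so this is legitimate once we first apply \eqref{radeasy} to the smooth functions $(w^2+\delta^2)^{-a/4}\zeta \in C^\infty_c(B_1)$ and let $\delta\to 0$ by dominated convergence (using that $u$ is smooth). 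The reason for choosing exactly this weight is the set of elementary identities
\[
\nabla w = w^{-1}A^{-1}(0)x,\qquad |\nabla w|_{A(0)}=1,\qquad x\cdot\nabla w = w,\qquad A(0)\nabla u\cdot\nabla w = w^{-1}(x\cdot\nabla u),
\]
which force the same cancellations as in the constant-coefficient argument of \cite{CabreFigalliRosSerra}.

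Next I would expand \eqref{radeasy}. Using $\eta^2 = w^{-a}\zeta^2$, $\nabla(\eta^2)=-aw^{-a-1}\nabla w\,\zeta^2 + w^{-a}\nabla(\zeta^2)$ and $|\nabla\eta|_{A(0)}^2 = \tfrac{a^2}{4}w^{-a-2}\zeta^2 + (\text{terms with }\nabla\zeta)$, together with the identities above, the part of the left-hand side of \eqref{radeasy} not containing $\nabla\zeta$ collapses to
\[
(n-2-a)\int_{B_1}|\nabla u|_{A(0)}^2\,w^{-a}\zeta^2\,\d x + \frac{a(8-a)}{4}\int_{B_1}w^{-a-2}(x\cdot\nabla u)^2\zeta^2\,\d x,
\]
the coefficient $n-2-a$ arising from $(n-2)$ minus the $a$ produced by $x\cdot\nabla(\eta^2)$ via $x\cdot\nabla w = w$, and $\tfrac{a(8-a)}{4}=2a-\tfrac{a^2}{4}$ arising from the $-2(x\cdot\nabla u)A(0)\nabla u\cdot\nabla(\eta^2)$ term (contributing $+2a$) and the $-|x\cdot\nabla u|^2|\nabla\eta|_{A(0)}^2$ term (contributing $-\tfrac{a^2}{4}$). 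By the range \eqref{range} both coefficients are nonnegative; since the two integrands are nonnegative I may restrict to $B_\rho$, where $\zeta\equiv1$, and then uniform ellipticity converts these $w$-weighted, $A(0)$-normed quantities into the Euclidean left-hand side of \eqref{rad:almost} (using $(x\cdot\nabla u)^2=r^2u_r^2$).

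It remains to bound the leftover terms. Every term containing $\nabla\zeta$ is supported on $B_{2\rho}\setminus B_\rho$, where $r\sim w\sim\rho$, $|\nabla\zeta|\le C/\rho$ and $|\nabla w|\le C$, so each is bounded by $C\int_{B_{2\rho}\setminus B_\rho}r^{-a}|\nabla u|^2\,\d x$. On the right-hand side of \eqref{radeasy}, using $w\sim r$ and $\supp\zeta\subset B_{2\rho}$, together with $|x|^3|\nabla\eta|^2\le C\big(r^{1-a}\zeta^2+r^{3-a}|\nabla\zeta|^2\big)$ (the last summand again living on the annulus, where $r^{3-a}\rho^{-2}\sim r^{1-a}$), all the error terms are absorbed into $C\varepsilon\int_{B_{2\rho}}r^{2-a}|D^2u||\nabla u|\,\d x + C\varepsilon\int_{B_{2\rho}}(r^{1-a}+\varepsilon r^{2-a})|\nabla u|^2\,\d x$. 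Collecting the three groups yields \eqref{rad:almost}. I expect the main difficulty to be purely organizational: after expanding \eqref{radeasy} with a singular, compactly supported weight one generates many cross terms, and one must check that each stray term genuinely has the weight and support structure needed to fall into one of the three groups on the right-hand side; the algebraic cancellations themselves are dictated by the four identities for $w$ above and are no harder than in the Laplacian case.
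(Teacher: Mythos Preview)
Your proposal is correct and follows essentially the same route as the paper: the paper also plugs the singular test function $\eta = |x|_{A^{-1}(0)}^{-a/2}\zeta$ into \eqref{radeasy}, obtains exactly the coefficients $(n-2-a)$ and $\tfrac{a(8-a)}{4}$ via the same computations (your four identities for $w$), and then bounds the $\nabla\zeta$--terms and the $\varepsilon$--errors using $r\sim w\sim\rho$ on the annulus and uniform ellipticity. The only cosmetic difference is the regularization: the paper truncates via $\eta_\delta=\min\{w^{-a/2},\delta^{-a/2}\}\zeta$ rather than your $(w^2+\delta^2)^{-a/4}\zeta$, but either approximation works for the same reason.
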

\begin{proof}
By approximation, 
the inequality in Lemma \ref{lemma:int:radial} is valid for Lipschitz test functions $\eta \in C^{0,1}_c(B_1)$.
Moreover, this inequality also holds for the singular test function
\[
\eta := |x|_{A^{-1}(0)}^{-a/2}\zeta,
\]
where $\zeta \in C^{0,1}_c(B_1)$ is a cut-off.
To see this, for $\delta > 0$ consider the $C^{0,1}_c$ approximation
\[
\eta_{\delta} = \min \{|x|_{A^{-1}(0)}^{-a/2}, \delta^{-a/2}\}\zeta
\]
and apply dominated convergence to take the limit as $\delta \to 0$.

By Remark~\ref{remark:azero},
it suffices to compute the left-hand side of the inequality in Lemma~\ref{lemma:int:radial}
with $A(0)$ in place of $A(x)$.
Since
\[
\nabla (\eta^2) = - a \zeta^2 
|x|_{A^{-1}(0)}^{-(a+2)} 
A^{-1}(0) x  + |x|_{A^{-1}(0)}^{-a} \nabla (\zeta^2),
\]
the first integrand in \eqref{radeasy} is equal to
\begin{equation}\label{int:decay:0}
\begin{split}
&|\nabla u|_{A(0)}^2 \Big( (n-2)\eta^2 + x \cdot \nabla(\eta^2) \Big) \\
&\quad = (n-2-a) |x|_{A^{-1}(0)}^{-a}|\nabla u|_{A(0)}^2 \zeta^2 + |x|_{A^{-1}(0)}^{-a} |\nabla u|_{A(0)}^2 (x \cdot \nabla (\zeta^2)).
\end{split}
\end{equation}
Moreover, since
\[
|\nabla \eta|^2_{A(0)} = 
\frac{a^2}{4} |x|_{A^{-1}(0)}^{-(a+2)} \zeta^2
- \frac{a}{2} |x|_{A^{-1}(0)}^{-(a+2)} \big(x \cdot \nabla(\zeta^2) \big)
+ |x|_{A^{-1}(0)}^{-a} |\nabla \zeta|^2_{A(0)},
\]
the second integrand is
\begin{equation}\label{int:decay:1}
\begin{split}
&-2(x\cdot \nabla u)
A(0) \nabla u \cdot \nabla(\eta^2)
-|x \cdot \nabla u|^2 |\nabla \eta|_{A(0)}^2 
\\
&\quad\quad= 
\frac{a(8-a)}{4}  |x|_{A^{-1}(0)}^{-(a+2)} |x\cdot \nabla u|^2 \zeta^2
-2|x|_{A^{-1}(0)}^{-a} (x\cdot \nabla u) A(0) \nabla u \cdot \nabla (\zeta^2)\\
&
\quad \quad \quad \quad 
-|x \cdot \nabla u|^2 |x|_{A^{-1}(0)}^{-a} |\nabla \zeta|_{A(0)}^2 + \frac{a}{2} |x \cdot \nabla u |^2 |x|_{A^{-1}(0)}^{-a-2} \big( x \cdot \nabla (\zeta^2) \big).
\end{split}
\end{equation}

From the identities \eqref{int:decay:0} and \eqref{int:decay:1},
by \eqref{radeasy},
it follows that
\begin{equation}\label{int:decay:6}
\begin{split}
& (n-2-a) \int_{B_1} |x|_{A^{-1}(0)}^{-a} |\nabla u|_{A(0)}^2 \zeta^2 \d x  + \dfrac{a(8-a)}{4} \int_{B_1} |x|_{A^{-1}(0)}^{-a-2} |x \cdot \nabla u |^2 \zeta^2 \d x\\
& \leq  C \int_{B_1} \big( r^{2-a}  |\nabla \zeta|^2 + r^{1-a} |\nabla(\zeta^2)| \big)|\nabla u|^2 \d x  +C \varepsilon \int_{B_1} r^{2-a} |D^2 u| |\nabla u| \zeta^2 \d x  \\
& \quad \quad 
+ C \varepsilon \int_{B_1} \big( r^{3-a} |\nabla \zeta|^2 + r^{2-a} |\nabla(\zeta^2)| \big)|\nabla u|^2 \d x
+ C \varepsilon \int_{B_1} \big(r^{1-a} + \varepsilon r^{2-a}\big)|\nabla u|^2 \zeta^2 \d x,
\end{split}
\end{equation}
for some universal constant $C$,
where we have controlled the remainder terms
thanks to the uniform ellipticity 
and the fact that $a$ is bounded by a dimensional constant.

For $ 0 < \rho \leq 1/2$
as in the statement,
we consider a Lipschitz function $\zeta$,
with $0 \leq \zeta \leq 1$, such that  $\zeta|_{B_{\rho}} = 1$,
$\supp \zeta \subset \overline{B}_{2\rho}$, and $|\nabla \zeta| \leq C/\rho$.
Plugging this cutoff function in \eqref{int:decay:6}, using that $r$ is comparable with $\rho$ inside $\supp \nabla \zeta \subset \overline{B}_{2\rho} \setminus B_{\rho}$,
we deduce that
\begin{equation}\label{int:decay:lul}
\begin{split}
& (n-2-a) \int_{B_1} |x|_{A^{-1}(0)}^{-a} |\nabla u|_{A(0)}^2 \zeta^2 \d x  + \dfrac{a(8-a)}{4} \int_{B_1} |x|_{A^{-1}(0)}^{-a-2} |x \cdot \nabla u |^2 \zeta^2 \d x\\
& \quad \quad \quad  \leq  C \int_{B_{2\rho} \setminus B_{\rho}} r^{-a} |\nabla u|^2 \d x  +C \varepsilon \int_{B_{2\rho}} r^{2-a} |D^2 u| |\nabla u|  \d x \\
& \quad \quad \quad \quad \quad \quad \quad + C \varepsilon \int_{B_{2\rho}} \big(r^{1-a} + \varepsilon r^{2-a}\big) |\nabla u|^2 \d x.
\end{split}
\end{equation}
Since $a$ is in the range \eqref{range},
the constants in the left-hand side of \eqref{int:decay:lul} are nonnegative.
Moreover,
by uniform ellipticity 
we have
$|x|_{A^{-1}(0)} \leq \elliptic^{-1/2} |x|$
and $|\nabla u|^2_{A(0)} \geq \elliptic |\nabla u|^2$,
hence, since $\zeta|_{B_{\rho}} = 1$ and $\zeta \geq 0$, it follows that
\begin{equation}
\label{int:lambda}
\begin{split}
&\lambda^{a/2 + 1}\left( (n-2-a)\int_{B_{\rho}} r^{-a} |\nabla u|^2 \d x + \frac{a(8-a)}{4} \int_{B_{\rho}} r^{-a} u_r^2 \d x\right)\\
&\leq
 (n-2-a) \int_{B_1} |x|_{A^{-1}(0)}^{-a} |\nabla u|_{A(0)}^2 \zeta^2 \d x  + \dfrac{a(8-a)}{4} \int_{B_1} |x|_{A^{-1}(0)}^{-a-2} |x \cdot \nabla u |^2 \zeta^2 \d x.
\end{split}
\end{equation}
Using \eqref{int:lambda} in \eqref{int:decay:lul} and multiplying by $\lambda^{-a/2-1}$ now yields the claim.
\end{proof}

We can finally prove Proposition~\ref{prop:rad}.
For this, we will apply Lemma~\ref{lemma:prerad} with the exponent $a = n-2$.
The key point in the proof will be to control the weighted $L^1$ norm of $|D^2 u| |\nabla u|$ in the right-hand side of \eqref{rad:almost}
by 
a weighted $L^2$ norm of the gradient.
We obtain this bound by writing the integral as an infinite sum on dyadic annuli and by using that 
the weight in each annulus can be pulled out of the integral.
This allows us to apply the non-weighted a priori estimate
for the ``Hessian times the gradient'' \eqref{ineq:hessgrad2} from Theorem~\ref{thm:sz}.

\begin{proof}[Proof of Proposition~\ref{prop:rad}]
Since $3 \leq n \leq 9$, we have that $\min\{10, n\}-2  = n-2$
and we may choose the exponent $a = n-2$ in Lemma~\ref{lemma:prerad}, which yields the inequality
\begin{equation}
\label{expo}
\begin{split}
&\frac{(n-2)(10-n)}{4} \int_{B_{\rho}} r^{2-n} u_r^2 \d x\\
& \quad \quad \quad  \leq  C \int_{B_{2\rho} \setminus B_{\rho}} r^{2-n} |\nabla u|^2 \d x  +C \varepsilon \int_{B_{2\rho}} r^{4-n} |D^2 u| |\nabla u|  \d x \\
& \quad \quad \quad \quad \quad \quad  
 + C \varepsilon \int_{B_{2\rho}} \big( r^{3-n} + \varepsilon r^{4-n}\big) |\nabla u|^2 \d x.
\end{split}
\end{equation}
To bound the Hessian term $\int_{B_{2\rho}} r^{4-n} |D^2 u| |\nabla u| \d x$ on the right-hand side of \eqref{expo},
we will apply the a priori estimate on annuli \eqref{ineq:hessgrad2} from Theorem~\ref{thm:sz} at different scales.

Let $r_{j} := 2^{-j}$ with $j \geq 0$.
As explained in Section~\ref{section:prelim}, the functions $u(r_{j} \cdot)$ are stable solutions to a semilinear equation with coefficients $A(r_{j} \cdot)$ and $r_{j} \vv( r_{j} \cdot)$.
In particular, since $\|D A\|_{C^0(\overline{B}_1)} + \|\vv\|_{C^0(\overline{B}_1)} \leq \varepsilon$, 
we also have
$\|D A(r_{j} \cdot )\|_{C^0(\overline{B}_1)} + \| r_{j}\vv(r_{j} \cdot )\|_{C^0(\overline{B}_1)} \leq \varepsilon r_{j} \leq \varepsilon$.
Hence, 
by \eqref{ineq:hessgrad2} in Theorem~\ref{thm:sz} applied to $u(r_{j} \cdot)$,
there is a universal $\varepsilon_0 > 0$ 
with the following property:
if $\varepsilon \leq \varepsilon_0$, then 
\begin{equation}
\label{bad}
\int_{B_{r_{j+1}} \setminus B_{r_{j+2}}} |D^2 u| |\nabla u| \d x \leq C r_j^{-1} \int_{B_{r_{j}} \setminus B_{r_{j+3}}} |\nabla u|^2 \d x \quad \text{ for all } j \geq 0,
\end{equation}
where $C$ is a universal constant.

Writing the weighted integral as an infinite sum on annuli, we have
\begin{equation}
\label{bad22}
\begin{split}
\int_{B_{1/2}} r^{4-n} |D^2 u| |\nabla u| \d x &
= \sum_{j = 0}^{\infty} \int_{B_{r_{j+1}} \setminus B_{r_{j+2}}} r^{4-n} |D^2 u| |\nabla u| \d x\\
& \leq C \sum_{j = 0}^{\infty} r_j^{4-n} \int_{B_{r_{j+1}} \setminus B_{r_{j+2}}} |D^2 u| |\nabla u| \d x,
\end{split}
\end{equation}
where in the last line we have used that $r^{4-n} \leq C r_{j}^{4-n}$ in $B_{r_{j+1}} \setminus B_{r_{j+2}}$, with $C$ universal.
Multiplying \eqref{bad} by $r_j^{4-n}$ and summing in $j$, the right-hand side in \eqref{bad22} can be bounded by
\begin{equation}
\label{bad23}
\begin{split}
\sum_{j = 0}^{\infty} r_{j}^{4-n} \int_{B_{r_{j+1}} \setminus B_{r_{j+2}}} |D^2 u| |\nabla u| \d x  &\leq C \sum_{j = 0}^{\infty} r_{j}^{3-n} \int_{B_{r_{j}} \setminus B_{r_{j+3}}} |\nabla u|^2 \d x\\
&\leq C \sum_{j = 0}^{\infty} \int_{B_{r_{j}} \setminus B_{r_{j+3}}} r^{3-n} |\nabla u|^2 \d x\\
& \leq C \int_{B_1} r^{3-n} |\nabla u| \d x.
\end{split}
\end{equation}
Combining \eqref{bad22} and \eqref{bad23}, we deduce
\begin{equation}
\label{bad2}
\int_{B_{1/2}} r^{4-n} |D^2 u | |\nabla u| \d x \leq C \int_{B_{1}} r^{3-n} |\nabla u|^2 \d x,
\end{equation}
where $C$ is universal.
Applying \eqref{bad2} to the stable solutions $u(4 \rho \cdot)$, 
there is a universal $\varepsilon_0 > 0$ with the following property:
if $\varepsilon \leq \varepsilon_0$, then
\begin{equation}
\label{bad3}
\int_{B_{2\rho}} r^{4-n} |D^2 u | |\nabla u| \d x \leq C \int_{B_{4\rho}} r^{3-n} |\nabla u|^2 \d x \quad \text{ for all } \rho \leq 1/4,
\end{equation}
where $C$ is a universal constant.

Applying \eqref{bad3} in \eqref{expo},
we deduce the key estimate
\begin{equation}
\label{bad4}
\begin{split}
&\frac{(n-2)(10-n)}{4} \int_{B_{\rho}} r^{2-n} u_r^2 \d x \\
& \quad \quad \quad \quad \leq C \int_{B_{2\rho} \setminus B_{\rho}} r^{2-n} |\nabla u|^2 \d x + C \varepsilon \int_{B_{4\rho}} r^{3-n} |\nabla u|^2 \d x,
\end{split}
\quad \text{ for } \rho \leq 1/4 \text{ and } \varepsilon \leq \varepsilon_0,
\end{equation}
where we have additionally bounded the last integrand in \eqref{expo} by $\varepsilon r^{4-n} \leq C r^{3-n}$.
Finally, since $(n-2)(10-n) > 0$, absorbing this constant on the right-hand side of \eqref{bad4} yields the claim.
\end{proof}

\begin{remark}
\label{remark:decay}
Our proof 
in Section~\ref{section:holder}
of the $C^{\alpha}$ bound~\eqref{eq:holder} from Theorem~\ref{thm:main}
controls the weighted integral $\int_{B_{\rho}} r^{2-n} |\nabla u|^2 \d x$.
It will require a delicate estimate proven in Section~\ref{section:radl1}.
As a consequence, we will also obtain a bound for the less singular error terms $\int_{B_{\rho}} r^{3-n} |\nabla u|^2 \d x$.
Here we point out that this last quantity can be estimated directly 
from our previous Lemma~\ref{lemma:prerad},
without the use of Section~\ref{section:radl1}.
This is done as follows.

Letting $a = n-3$ in Lemma~\ref{lemma:prerad},
since $a(8-a) = (n-3)(11-n) \geq 0$ for $3 \leq n \leq 11$,
we can drop the radial term in \eqref{rad:almost}
and the left-hand side becomes $\int_{B_{\rho}} r^{3-n} |\nabla u|^2 \d x$.
The right-hand side now includes an error term $\varepsilon \int_{B_{2\rho}} r^{5-n} |D^2 u| |\nabla u| \d x$,
which can be bounded by $\varepsilon \int_{B_{4\rho}} r^{4-n} |\nabla u|^2 \d x$
for $\varepsilon \leq \varepsilon_0$
as in the proof of Proposition~\ref{prop:rad}.
Hence, we obtain
\[
\int_{B_{\rho}} r^{3-n} |\nabla u|^2 \d x \leq C \int_{B_{4\rho} \setminus B_{\rho}} r^{3-n} |\nabla u|^2 \d x + C \varepsilon \int_{B_{\rho}} r^{4-n} |\nabla u|^2 \d x \quad \text{ for all } \rho \leq 1/4.
\]
Making $\varepsilon_0$ smaller if necessary, since $r^{4-n} \leq r^{3-n}$ in $B_{\rho}$, we can absorb the rightmost term into the left-hand side.
This leads to an expression which can be hole-filled,
and by a standard iteration argument it is easy to deduce the decay estimate
\begin{equation}
\label{olddec}
\int_{B_{\rho}} r^{3-n} |\nabla u|^2 \d x \leq C \rho^{2\alpha} \|\nabla u\|_{L^2(B_1)}^2 \quad \text{ for all } \rho \leq 1/4,
\end{equation}
where $\alpha > 0$ and $C$ are universal constants.
We note that,
adding superfluous variables, the decay \eqref{olddec} is satisfied when $n \leq 11$.

In general,
this strategy allows to prove decay 
of the weighted integrals $\int_{B_{\rho}} r^{-a} |\nabla u|^2 \d x$
for exponents 
$a < n - 2$ and $a \leq 8$.
As an application, 
we could extend the optimal regularity
result of Peng, Zhang, and Zhou~\cite{PengZhangZhouBMO} for stable solutions in dimensions $n \geq 10$ to our setting of variable coefficients.
The key point in \cite{PengZhangZhouBMO} is to find an a a priori estimate of the form $\rho^{-a_n} \int_{B_{\rho}} |\nabla u|^2\d x \leq C \|\nabla u\|^2_{L^2(B_1)}$,
where $a_n = 2 (1 + \sqrt{n-1})$ is a critical exponent.
When considering variable coefficients,
one has to deal with error terms 
$\varepsilon \int_{B_{\rho}} r^{1-a_n} |\nabla u|^2 \d x$.
Our method above gives the decay of these error terms whenever $10 \leq n \leq 13$.
For this, we choose 
$a = a_n - 1$ in \eqref{rad:almost},
and notice that $a_n - 1 < n -2$ and $a_n -1 \leq 8$ 
in this dimension range.
The case $n \geq 14$ can be treated similarly
using that $a = a_n -1 \geq 8$ and $u_r^2 \leq |\nabla u|^2$ in \eqref{rad:almost}.
\end{remark}

\begin{remark}
The key estimate in the proof of Proposition~\ref{prop:rad} is an inequality for weighted integrals,
\begin{equation}
\label{auxhess}
\int_{B_{2\rho}} r^{4-n} |D^2 u| |\nabla u| \d x \leq C \int_{B_{4\rho}} r^{3-n} |\nabla u|^2 \d x,
\end{equation}
which has been proved decomposing the integral in dyadic annuli.
There is a way to prove a weaker inequality than \eqref{auxhess}, 
namely,
\begin{equation}
\label{weighted:bad}
\begin{split}
\int_{B_{2\rho}} r^{4-n} |D^2 u| |\nabla u| \d x &\leq C \left(\int_{B_{4\rho}} r^{2-n} |\nabla u|^2 \d x\right)^{1/2}\left( \int_{B_{4\rho}} r^{4-n} |\nabla u|^2 \d x\right)^{1/2} \\
& \quad \quad \quad \quad \quad \quad + C \int_{B_{4\rho}} r^{3-n} |\nabla u|^2 \d x,
\end{split}
\end{equation}
which avoids the use of annuli
and which involves a very recent test function of
Cabr\'{e}~\cite{Cabre-StableHardy} and Peng, Zhang, and Zhou \cite{PengZhangZhouHolder}.

To obtain \eqref{weighted:bad},
one uses the inequality \eqref{ineq:hessgrad}
from the proof of Theorem~\ref{thm:sz}
with the singular test function 
$\eta = r^{-n/2} \zeta$, 
where $\zeta \in C^{\infty}_c(B_{1})$ is a cut-off.
It is worth noting
that this inequality relies on the Sternberg-Zumbrun estimate for $\int_{B_1} \anew^2 \eta^2 \d x$,
which comes from choosing the test function 
$\xi = |\nabla u| \eta = |\nabla u| r^{-n/2} \zeta$ 
in the integral stability inequality \eqref{stable:int}.
A function of this form had already appeared in \cite{Cabre-StableHardy},
where the author considered $\xi = |\nabla u| r^{-(n+ \varepsilon)/2} \zeta$ 
to prove the boundedness of stable solutions
for unsigned nonlinearities when $n \leq 4$.
Interestingly, 
our choice $\xi = |\nabla u| r^{-n/2}$ 
coincides with the 
test function used by the authors in \cite{PengZhangZhouHolder},
where they obtained an a priori H\"{o}lder estimate 
for stable solutions when $n \leq 5$.
While their strategy involves integrating by parts an expression that is already quadratic in the gradient,
expressing the new ``coupled'' Hessian errors $D^2 u \nabla u$ as the uncoupled expression $\Delta u \nabla u$ and an error term related to $\azero$,
we do not integrate by parts.

By Young's inequality, the coefficient in front of the highest order term $\int_{B_{4\rho}} r^{2-n} |\nabla u|^2 \d x$ in \eqref{weighted:bad} can be made arbitrarily small,
and this term can still be treated by our methods 
to yield the H\"{o}lder estimate from Theorem~\ref{thm:main}.
\end{remark}

\section{The radial derivative controls the function in $L^1$}
\label{section:radl1}

The goal of this section is to  prove Proposition~\ref{prop:l1rad},
where we control the $L^1$ norm of (generalized) superharmonic functions $L u \leq 0$ by the $L^1$ norm of the radial derivative on annuli.
By a comparison argument, it will suffice to derive the analogue $L^1$ estimate on spheres for harmonic functions $L v = 0$,
which are obtained by duality from the $L^{\infty}$ estimates of a Neumann problem.

Let $g \in C^{\infty}(\partial B_1)$,
and consider the Neumann problem in divergence form
\begin{equation}
\label{aux:neumann}
\left\{
\begin{array}{rl}
\div(A(x) \nabla \varphi) 
= 0 &\text{ in } B_1\\
A(x) \nabla \varphi \cdot \nu = g &\text{ on } \partial B_1,
\end{array}
\right.
\end{equation}
which
admits solutions if and only if $\int_{\partial B_1} g \d \mathcal{H}^{n-1} = 0$.
Recall that the solutions of \eqref{aux:neumann} are unique up to addition of constants.
We will derive an $L^{\infty}$ a priori estimate for the zero mean solutions of \eqref{aux:neumann}
in terms of the conormal derivative
\begin{equation}
\label{def:conormal}
N \varphi := 
A(x) \nabla \varphi \cdot \frac{x}{|x|}.
\end{equation}
This is achieved by a Moser iteration
based on the following Sobolev trace inequality
\begin{equation}
\label{key:ineq}
\|u\|_{L^{2^{\star}}(\partial B_1)}^2 \leq C \big(\| \nabla u \|_{L^2(B_1)}^2 +  \| u \|_{L^2(\partial B_1)}^2\big)
\end{equation}
for $u \in W^{1,2}(B_1)$,
where $C$ depends only on $n$ and
\begin{equation}
\label{def:sobolev:exp}
2^{\star} := \frac{n-1}{n-2}2
\end{equation}
is the Sobolev trace exponent.
We give a short proof of this inequality in Appendix~\ref{app:trace:sobolev}.

Our proof by Moser iteration is inspired by the one of Winkert in \cite{Winkert},
where 
he
obtains $L^{\infty}$ estimates for general quasilinear Neumann problems.
While the author employs certain technical interpolation and trace inequalities from the theory of Besov and Lizorkin-Triebel spaces,
we only need the basic trace inequality \eqref{key:ineq}, 
for which we give an elementary proof.

\begin{lemma}
\label{moser}
Let $\varphi$ be the unique solution of \eqref{aux:neumann} with $\int_{B_1} \varphi \d x = 0$.
Then
\[
\|\varphi \|_{L^{\infty}(B_1)} \leq C \| g\|_{L^{\infty}(\partial B_1)},
\]
where $C$ depends only on $n$ and $\elliptic$.
\end{lemma}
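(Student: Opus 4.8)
The plan is to prove the bound by a Moser iteration carried out on the boundary, using the maximum principle to reduce the interior estimate to a boundary one. By linearity of \eqref{aux:neumann} we may normalize $\|g\|_{L^{\infty}(\partial B_1)} = 1$ and prove $\|\varphi\|_{L^{\infty}(B_1)} \le C$; we may also assume $n \ge 3$, the cases $n \le 2$ being elementary or reducible to higher dimension by adding superfluous variables. Since $\varphi$ solves $\div(A(x)\nabla\varphi) = 0$ in $B_1$ with $A$ uniformly elliptic, the weak maximum principle gives $\|\varphi\|_{L^{\infty}(B_1)} = \|\varphi\|_{L^{\infty}(\partial B_1)}$, so it suffices to bound $\|\varphi\|_{L^{p}(\partial B_1)}$ uniformly as $p \to \infty$.

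\emph{Base step.} Testing the weak formulation of \eqref{aux:neumann} with $\xi = \varphi$ gives $\int_{B_1} A(x)\nabla\varphi\cdot\nabla\varphi\,\d x = \int_{\partial B_1} g\varphi\,\d\mathcal{H}^{n-1}$. Using uniform ellipticity on the left, $\|g\|_{L^{\infty}} = 1$ on the right, the trace inequality in the interpolated form $\|w\|_{L^2(\partial B_1)}^2 \le \delta\|\nabla w\|_{L^2(B_1)}^2 + C_{\delta}\|w\|_{L^2(B_1)}^2$, and the Poincaré inequality (available because $\int_{B_1}\varphi\,\d x = 0$), one absorbs terms to obtain $\|\varphi\|_{L^2(B_1)} + \|\varphi\|_{L^2(\partial B_1)} \le C$.

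\emph{Iteration step.} Fix $\beta \ge 1$; for $M > 1$ set $\varphi_M := \min\{|\varphi|, M\}$, $v_M := \varphi\,\varphi_M^{\beta - 1}$, and test \eqref{aux:neumann} with $\xi := \varphi\,\varphi_M^{2(\beta - 1)} \in W^{1,2}(B_1)$. A pointwise computation gives $A(x)\nabla\varphi\cdot\nabla\xi \ge \frac{\elliptic}{\beta}|\nabla v_M|^2$ in $B_1$, while on $\partial B_1$ one checks $|\xi| \le v_M^2 + 1$; since the weak form equates the interior bilinear expression with $\int_{\partial B_1} g\xi$, this yields
\[
\frac{\elliptic}{\beta}\int_{B_1}|\nabla v_M|^2\,\d x \le \int_{\partial B_1} |v_M|^2\,\d\mathcal{H}^{n-1} + C.
\]
Inserting the resulting bound on $\|\nabla v_M\|_{L^2(B_1)}^2$ into the sharp Sobolev trace inequality \eqref{key:ineq} applied to $v_M$ and collecting constants, one obtains $\|v_M\|_{L^{2^{\star}}(\partial B_1)}^2 \le C\beta\big(\|v_M\|_{L^2(\partial B_1)}^2 + 1\big)$. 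Since $v_M \le |\varphi|^{\beta}$ pointwise and $v_M \to |\varphi|^{\beta-1}\varphi$ as $M \to \infty$, monotone convergence gives the self-improving estimate
\[
\|\varphi\|_{L^{\beta 2^{\star}}(\partial B_1)}^{2\beta} \le C\beta\big(\|\varphi\|_{L^{2\beta}(\partial B_1)}^{2\beta} + 1\big), \qquad 2^{\star} = \tfrac{n-1}{n-2}2 .
\]

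\emph{Conclusion and main difficulty.} Set $\chi := 2^{\star}/2 = \tfrac{n-1}{n-2} > 1$, $\beta_k := \chi^k$, $p_k := 2\chi^k$, so that $2\beta_k = p_k$ and $\beta_k 2^{\star} = p_{k+1}$, and let $\Phi(p) := \max\{\|\varphi\|_{L^p(\partial B_1)}, 1\}$. Applied inductively (which also shows $\varphi \in L^{p_k}(\partial B_1)$ for every $k$), the self-improving estimate reads $\Phi(p_{k+1}) \le (C\beta_k)^{1/(2\beta_k)}\Phi(p_k)$; since $\sum_k \beta_k^{-1}\log(C\beta_k) < \infty$, the product $\prod_k (C\beta_k)^{1/(2\beta_k)}$ converges, and together with the base step this gives $\Phi(p_k) \le C'$ for all $k$. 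Letting $k \to \infty$ we get $\|\varphi\|_{L^{\infty}(\partial B_1)} = \lim_k \|\varphi\|_{L^{p_k}(\partial B_1)} \le C'$, and the maximum principle together with the normalization yields the claim. The delicate points are: keeping the constants in the iteration polynomial in $\beta$ (which is why the boundary forcing term must be estimated by the quadratic quantity $\int_{\partial B_1}|v_M|^2$ and then absorbed through the sharp trace inequality \eqref{key:ineq}); the absence of a zeroth-order term in \eqref{aux:neumann}, so that coercivity on $B_1$ enters only via the trace inequality and the lower-order term $\|v_M\|_{L^2(\partial B_1)}$ must be carried through every step (it is controlled by Poincaré only in the base step, producing the harmless $+1$ above); and the routine truncation at level $M$ needed to justify the test function, followed by the limit $M \to \infty$.
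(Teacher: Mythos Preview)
Your proof is correct and follows essentially the same route as the paper's: reduce to the boundary via the maximum principle, obtain a base energy estimate from the test function $\varphi$ together with Poincar\'e and the trace inequality, and then run a Moser iteration on $\partial B_1$ using the Sobolev trace inequality \eqref{key:ineq}. The only differences are bookkeeping: the paper tests with $(\varphi^{+})^{m-1}$ and treats $\varphi^{\pm}$ separately, whereas you handle both signs at once via the truncated test function $\varphi\,\varphi_M^{2(\beta-1)}$ and pass $M\to\infty$; and the paper organizes the iteration through a three-case analysis, while you package the same information more compactly with $\Phi(p)=\max\{\|\varphi\|_{L^p(\partial B_1)},1\}$.
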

\begin{proof}
Dividing the solution by the norm $\|g \|_{L^{\infty}(\partial B_1)}$, we may assume that $\|g\|_{L^{\infty}(\partial B_1)} = 1$.
By the maximum principle, it suffices to bound the $L^{\infty}$ norm of $\varphi$ on the sphere $\partial B_1$.
In this proof, $C$ always denotes a generic constant depending only on $n$ and $\elliptic$.

First, we obtain a basic energy estimate.
Multiplying the equation \eqref{aux:neumann} by $\varphi$ and integrating by parts, we have
\begin{equation}
\label{en0}
\int_{B_1} |\nabla \varphi|^2_{A(x)} \d x = \int_{\partial B_1} g \varphi \d \mathcal{H}^{n-1}.
\end{equation}
Combining the standard trace inequality $\|u\|^2_{L^2(\partial B_1)} \leq C \big( \|\nabla u\|_{L^2(B_1)}^2 + \|u\|_{L^2(B_2)}^2 \big)$
with the Poincaré inequality in the ball (recall that $\int_{B_1} \varphi = 0$),
we also have
\begin{equation}
\label{en15}
\|\varphi\|_{L^{2}(\partial B_1)} 
\leq C \|\nabla \varphi\|_{L^2(B_1)}.
\end{equation}
Hence, applying Cauchy-Schwarz in \eqref{en0},
by \eqref{en15} we obtain
\begin{equation}
\label{en1}
\int_{B_1} |\nabla \varphi|^2_{A(x)} \d x 
\leq \|g\|_{L^2(\partial B_1)} \|\varphi\|_{L^2(\partial B_1)}
\leq C 
\|g\|_{L^2 (\partial B_1)}
 \|\nabla \varphi\|_{L^2(B_1)}.
\end{equation}
By uniform ellipticity and the bound $\|g\|_{L^{\infty}(B_1)} = 1$, from \eqref{en1} it follows that
\begin{equation}
\label{energy1}
\|\nabla \varphi\|_{L^2( B_1)} \leq C.
\end{equation}
Therefore, by \eqref{key:ineq}, \eqref{en15}, and \eqref{energy1} we deduce the a priori estimate
\begin{equation}
\label{energy:est}
\|\varphi\|_{L^{2^{\star}}(\partial B_1)} \leq C.
\end{equation}

Next, we derive an $L^{\infty}$ bound for the positive part of 
the solution
by Moser iteration.
Let $m \geq 2$.
Multiplying the equation by
the power $(\varphi^{+})^{m-1}$
and integrating by parts
\begin{equation}
\label{moser:1}
(m-1) \int_{B_1} (\varphi^{+})^{m-2}|\nabla \varphi^{+}|_{A(x)}^2 \d x = \int_{\partial B_1} (\varphi^{+})^{m-1}  g \d \mathcal{H}^{n-1}.
\end{equation}
Since
\[
|\nabla (\varphi^{+})^{\frac{m}{2}}|^2_{A(x)} = \frac{m^2}{4} (\varphi^{+})^{m-2} |\nabla (\varphi^{+})|^2_{A(x)},
\]
by \eqref{moser:1} and the uniform ellipticity, using that $\frac{m}{m-1} \leq 2$ for $m \geq 2$, we have
\begin{equation}
\label{moser:2}
\int_{B_1} |\nabla (\varphi^{+})^{\frac{m}{2}}|^2 \d x
\leq C m \int_{\partial B_1} (\varphi^{+})^{m-1} \d \mathcal{H}^{n-1}.
\end{equation}
Adding the integral $\int_{\partial B_1} (\varphi^{+})^m \d \mathcal{H}^{n-1}$ to both sides of \eqref{moser:2}, we have
\[
\| \nabla (\varphi^{+})^{\frac{m}{2}} \|_{L^2(B_1)}^2 + \| (\varphi^{+})^{\frac{m}{2}} \|_{L^2(\partial B_1)}^2
\leq C m 
\|(\varphi^{+})^{m-1}\|_{L^{1}(\partial B_1)}
+ \| (\varphi^{+})^{\frac{m}{2}} \|_{L^2(\partial B_1)}^2
\]
and applying the Sobolev trace inequality \eqref{key:ineq}  on the left-hand side yields
\begin{equation}
\label{moser:3}
\|\varphi^{+}\|^{m}_{L^{\frac{2^{\star}}{2}m}(\partial B_1)}
\leq C m 
\|(\varphi^{+})^{m-1}\|_{L^{1}(\partial B_1)}
+ 
C \|\varphi^{+}\|_{L^{m}(\partial B_1)}^{m}.
\end{equation}
By H\"{o}lder and since $m \geq 2$, 
the $L^{m-1}$ norm in \eqref{moser:3} can be bounded by
\[
\|(\varphi^{+})^{m-1}\|_{L^{1}(\partial B_1)}
\leq |\partial B_1|^{\frac{1}{m}} \|\varphi^{+}\|_{L^{m}(\partial B_1)}^{m-1} \leq C \|\varphi^{+}\|_{L^{m}(\partial B_1)}^{m-1},
\]
and hence
\begin{equation}
\label{moser:4}
\|\varphi^{+}\|^{m}_{L^{\frac{2^{\star}}{2}m}(\partial B_1)}
\leq C m 
\|\varphi^{+}\|_{L^{m}(\partial B_1)}^{m-1}
+ 
C \|\varphi^{+}\|_{L^{m}(\partial B_1)}^{m}.
\end{equation}
Since $\|\varphi^{+}\|^{m-1}_{L^{m}(\partial B_1)} \leq \max\{1,  \|\varphi^{+}\|_{L^{m}(\partial B_1)} \}^{m}$,
from \eqref{moser:4}
it follows that
\begin{equation}
\label{case:3}
\|\varphi^{+}\|_{L^{\frac{2^{\star}}{2}m}(\partial B_1)}
\leq C^{\frac{1}{m}} m^{\frac{1}{m}} \max \{1, \|\varphi^{+}\|_{L^{m}(\partial B_1)} \}.
\end{equation}

We wish to iterate \eqref{case:3}.
Let $m_0 := 2^{\star}$
and,
for $k \in \N$, let
\[
m_k := \left(\frac{2^{\star}}{2}\right)^k m_0.
\]
By \eqref{case:3} and the definition of $m_k$, we have
\begin{equation}
\label{case:4}
\begin{split}
\|\varphi^{+}\|_{L^{m_{k}}(\partial B_1)}
&\leq C^{\frac{1}{m_{k-1}}} m_{k-1}^{\frac{1}{m_{k-1}}} \max \{1, \|\varphi^{+}\|_{L^{m_{k-1}}(\partial B_1)} \} \\
& = (C m_0)^{\frac{1}{m_{0}} \left(\frac{2}{2^{\star}}\right)^{k-1} }  \left(\frac{2^{\star}}{2}\right)^{\frac{k-1}{m_{0}} \left(\frac{2}{2^{\star}}\right)^{k-1} } \max \{1, \|\varphi^{+}\|_{L^{m_{k-1}}(\partial B_1)} \}.
\end{split}
\end{equation}

We have exactly one of the following three cases:
\begin{itemize}
\item \underline{Case 1}:
\begin{equation}
\label{cas1}
\|\varphi^{+}\|_{L^{m_{k}}(\partial B_1)} \leq 1
\end{equation}
\item \underline{Case 2}:
there is an $l \in \{1, 2, \ldots, k - 1\}$ such that
\begin{equation}
\label{cas2}
\|\varphi^{+}\|_{L^{m_{k+1-i}}(\partial B_1)} > 1 \quad  \text{ for } 1\leq i \leq l, \quad \text{ and } \quad \|\varphi^{+}\|_{L^{m_{k-l}}(\partial B_1)} \leq 1.
\end{equation}
\item \underline{Case 3}:
\begin{equation}
\label{cas3}
\|\varphi^{+}\|_{L^{m_{k+1-i}}(\partial B_1)} > 1 \quad \text{ for } 1 \leq i \leq k.
\end{equation}
\end{itemize}

Case $1$ already yields a uniform bound for $\|\varphi^{+}\|_{L^{m_k}(B_1)}$.
If Case $2$ holds then, iterating \eqref{case:4} $l -1$ times, we arrive at
\begin{equation}
\label{cas2con}
\begin{split}
\|\varphi^{+}\|_{L^{m_{k}}(\partial B_1)} &\leq
(C m_0)^{\frac{1}{m_{0}} \sum_{j = k-l}^{k-1} \left(\frac{2}{2^{\star}}\right)^{j} }  \left(\frac{2^{\star}}{2}\right)^{\frac{1}{m_{0}} \sum_{j = k-l}^{k-1} j\left(\frac{2}{2^{\star}}\right)^{j} }.
\end{split}
\end{equation}
The right-hand side of \eqref{cas2con} is nondecreasing in $l$ for
(say) $C \geq 1$,
which we can always assume.
Finally, if Case 3 holds
then, iterating \eqref{case:4}, we obtain
\begin{equation}
\label{cas3con}
\begin{split}
\|\varphi^{+}\|_{L^{m_{k}}(\partial B_1)} &\leq
(C m_0)^{\frac{1}{m_{0}} \sum_{j = 0}^{k-1} \left(\frac{2}{2^{\star}}\right)^{j} }  \left(\frac{2^{\star}}{2}\right)^{\frac{1}{m_{0}} \sum_{j = 0}^{k-1} j\left(\frac{2}{2^{\star}}\right)^{j} } \|\varphi^{+}\|_{L^{m_{0}}(\partial B_1)}.
\end{split}
\end{equation}
By the monotonicity of \eqref{cas2con} in $l$
and using the a priori estimate \eqref{energy:est} for $\|\varphi\|_{L^{m_0}(\partial B_1)}$ in \eqref{cas3con},
we see that in all three cases above we have
\[
\|\varphi^{+}\|_{L^{m_{k}}(\partial B_1)} \leq
C (C m_0)^{\frac{1}{m_{0}} \sum_{j = 0}^{k-1} \left(\frac{2}{2^{\star}}\right)^{j} }  \left(\frac{2^{\star}}{2}\right)^{\frac{1}{m_{0}} \sum_{j = 0}^{k-1} j\left(\frac{2}{2^{\star}}\right)^{j} },
\] 
and since the exponent on the right-hand side is uniformly bounded, we deduce 
\begin{equation}
\label{there:u:go}
\|\varphi^{+}\|_{L^{m_{k}}(\partial B_1)} \leq C.
\end{equation}
Taking the limit as $k \to \infty$ in \eqref{there:u:go} now yields
\[
\|\varphi^{+}\|_{L^{\infty}(\partial B_1)} \leq C,
\]
which is the desired $L^{\infty}$ estimate for the positive part of the solutions.
The same argument gives an a priori estimate for the negative part $\varphi^{-}$ and yields the claim.
\end{proof}

By duality, from the $L^{\infty}$ estimate in Lemma~\ref{moser}
we deduce an $L^1$ bound for the elliptic problem with a source:

\begin{lemma}
\label{prop:l1:estimate}
Given $h \in C^{\infty}(\overline{B_1})$,
let $v \in C^{\infty}(\overline{B}_{1})$
satisfy
\[
\div(A(x) \nabla v) + h(x) = 0 \quad \text{ in } B_1.
\]

Then
\[
\|v - t\|_{L^{1}(\partial B_1)} \leq C \|N v\|_{L^{1}(\partial B_1)} + C \|h \|_{L^1(B_1)},
\]
where
$t := \inf \{ \overline{t} \, \colon \,
|\{v > \overline{t}\} \cap \partial B_1|
\leq |\partial B_1| /2
\}$
 and 
$C$ depends only on $n$ and $\elliptic$.
\end{lemma}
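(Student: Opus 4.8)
The plan is to prove the $L^1$ bound by duality against the $L^\infty$ estimate from Lemma~\ref{moser}. Fix any $g \in C^\infty(\partial B_1)$ with $\int_{\partial B_1} g \, \d\mathcal{H}^{n-1} = 0$ and $\|g\|_{L^\infty(\partial B_1)} \le 1$, and let $\varphi$ be the unique solution of the Neumann problem \eqref{aux:neumann} with $\int_{B_1} \varphi \, \d x = 0$. The idea is to pair the equation $\div(A(x)\nabla v) = -h$ against $\varphi$, integrating by parts on $B_1$. Using the symmetry of $A(x)$, this gives
\[
\int_{\partial B_1} g\, v \, \d\mathcal{H}^{n-1} = \int_{\partial B_1} (A(x)\nabla\varphi\cdot\nu)\, v \, \d\mathcal{H}^{n-1}
= \int_{\partial B_1} \varphi\, (Nv) \, \d\mathcal{H}^{n-1} + \int_{B_1} \varphi\, h \, \d x,
\]
where I have used that the conormal derivative $Nv = A(x)\nabla v \cdot \nu$ on $\partial B_1$ coincides with \eqref{def:conormal} since $\nu = x/|x|$ there. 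Subtracting the constant $t$ (permissible because $\int_{\partial B_1} g = 0$), we obtain
\[
\left| \int_{\partial B_1} g\, (v-t) \, \d\mathcal{H}^{n-1} \right| \le \|\varphi\|_{L^\infty(\partial B_1)} \|Nv\|_{L^1(\partial B_1)} + \|\varphi\|_{L^\infty(B_1)} \|h\|_{L^1(B_1)} \le C\big(\|Nv\|_{L^1(\partial B_1)} + \|h\|_{L^1(B_1)}\big)
\]
by Lemma~\ref{moser} (with $\|g\|_{L^\infty}\le 1$). Taking the supremum over all such $g$ would give $\|v-t\|_{L^1(\partial B_1)}$ up to the mean-zero constraint.

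The main obstacle is handling the zero-average constraint on $g$: the dual of $L^1(\partial B_1)$ is $L^\infty(\partial B_1)$, but in the duality above I can only test against $g$ with $\int_{\partial B_1} g = 0$, so a priori I control only $\|v - t - c\|_{L^1(\partial B_1)}$ for the particular constant $c = \fint_{\partial B_1}(v-t)$. The point of the specific choice $t := \inf\{\overline t : |\{v > \overline t\}\cap\partial B_1| \le |\partial B_1|/2\}$ — the median of $v$ on the sphere — is precisely to close this gap. Indeed, for the median one has the elementary inequality $\|v - t\|_{L^1(\partial B_1)} \le 2\,\|v - c\|_{L^1(\partial B_1)}$ for \emph{any} constant $c$ (since $t$ minimizes $c \mapsto \|v-c\|_{L^1(\partial B_1)}$, up to a factor: more precisely $\|v-t\|_{L^1} \le \|v-t\|_{L^1} \le \|v-c\|_{L^1} + |t-c|\,|\partial B_1|$ and $|t-c|\,|\partial B_1| \le 2\|v-c\|_{L^1}$ because the median lies within $\frac{2}{|\partial B_1|}\|v-c\|_{L^1}$ of any value $c$). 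Applying this with $c = \fint_{\partial B_1} v$, and using the standard fact that $\|v - \fint v\|_{L^1(\partial B_1)} = \sup\{\int g(v - \fint v) : \|g\|_\infty \le 1,\ \int g = 0\}$, reduces everything to the mean-zero duality bound already obtained. Combining these yields
\[
\|v - t\|_{L^1(\partial B_1)} \le 2\,\Big\|v - \fint_{\partial B_1} v\Big\|_{L^1(\partial B_1)} \le C\big(\|Nv\|_{L^1(\partial B_1)} + \|h\|_{L^1(B_1)}\big),
\]
which is the claim. A minor technical point to check along the way is that the solvability condition $\int_{\partial B_1} g = 0$ is exactly what is needed for $\varphi$ to exist, and that $v$ being $C^\infty(\overline B_1)$ makes all integrations by parts legitimate; the source $h \in C^\infty(\overline B_1)$ causes no trouble since $\varphi \in L^\infty(B_1)$.
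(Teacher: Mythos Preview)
Your approach is correct and essentially the same as the paper's: both prove the bound by duality, pairing $v$ against the solution $\varphi$ of the Neumann problem \eqref{aux:neumann} and invoking the $L^\infty$ bound of Lemma~\ref{moser}. The only difference is in how the mean-zero constraint on $g$ is handled. The paper exploits the median directly: since $|\{v>t\}|,\,|\{v<t\}|\le |\partial B_1|/2$, one can extend $\mathrm{sgn}(v-t)$ on $\{v=t\}$ to have zero average, mollify to get smooth $g_k$, and pass to the limit to recover $\|v-t\|_{L^1}$ exactly. You instead use that the median minimizes $c\mapsto\|v-c\|_{L^1}$ and then dualize $\|v-\fint v\|_{L^1}$; note that your claimed equality with the supremum over mean-zero $g$ is in general only an inequality up to a factor of~$2$ (subtract $\fint g$ and use $\|g-\fint g\|_\infty\le 2$), and you should also mollify to obtain smooth $g$ so that \eqref{aux:neumann} is solvable---neither point affects the conclusion.
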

\begin{proof}
Replacing $v$ by $v-t$ we may assume that $t = 0$, therefore
$|\{v > 0\} \cap \partial B_1| \leq |\partial B_1|/2$
and 
$|\{v < 0\} \cap \partial B_1| \leq |\partial B_1|/2$.
The function ${\rm sgn}(v) = v/|v|$ in $v \neq 0$ can then be extended to $\{v = 0\} \cap \partial B_1$,
taking values $\pm 1$
and in such a way that $\int_{\partial B_1} {\rm sgn}(v) \d \mathcal{H}^{n-1} = 0$.
In particular, $|v| = v \,{\rm sgn}(v)$ on $\partial B_1$.

We define the convolutions on $\partial B_1$
\[
g_k := {\rm sgn}(v) \star \eta_k,
\]
where $\{\eta_k\}$ is a sequence of smooth mollifiers on $\partial B_1$.
We have $g \in C^{\infty}(\partial B_1)$, $|g_k| \leq 1$, and $\int_{\partial B_1} g_k \d \mathcal{H}^{n-1} = 0$ since ${\rm sgn}(v)$ has zero average on $\partial B_1$.
Moreover, it holds that
\begin{equation}
\label{g:limit}
\int_{\partial B_1} |v| \d \mathcal{H}^{n-1} = \lim_{k} \int_{\partial B_1} v g_k \d \mathcal{H}^{n-1}.
\end{equation}

Since $g_k$ has zero average on $\partial B_1$, we can uniquely solve the Neumann problem
\[
\left\{
\begin{array}{rl}
\div(A(x) \nabla \varphi_k) 
= 0 &\text{ in } B_1\\
N \varphi_k = g_k &\text{ on } \partial B_1
\end{array}
\right.
\]
imposing additionally that $\int_{B_1} \varphi_k \d x = 0$.
By Lemma \ref{moser}, we deduce 
\begin{equation}
\label{apriori:neum}
\|\varphi_k\|_{L^{\infty}(\partial B_1)} \leq C,
\end{equation}
where $C$ depends only on $n$ and $\elliptic$.
Notice that, integrating by parts, we have
\[
\begin{split}
\int_{\partial B_1} v g_k \d \mathcal{H}^{n-1} &= \int_{\partial B_1} v N \varphi_k \d \mathcal{H}^{n-1} 
= \int_{\partial B_1} (N v) \varphi_k \d \mathcal{H}^{n-1} - \int_{B_1} \div(A(x) \nabla v)  \varphi_k \d x\\
&= \int_{\partial B_1} (N v) \, \varphi_k \d \mathcal{H}^{n-1} + \int_{B_1} h \,  \varphi_k \d x, 
\end{split}
\]
where in the last equality we have used the equation satisfied by $v$.
Hence, by \eqref{apriori:neum}
\[
\left|\int_{\partial B_1} v g_k \d \mathcal{H}^{n-1}  \right| 
\leq C  \int_{\partial B_1} |N v| \d \mathcal{H}^{n-1} + C \int_{B_1} |h|\d x, 
\]
and the claim follows by \eqref{g:limit}.
\end{proof}

We now use the previous estimates on spheres
to obtain $L^1$ bounds on annuli for a divergence-form operator with drift.
The drift term will be treated as a source,
which will appear as an error in the right-hand side of the estimate.
If
the coefficient matrix $A(x)$ is close to the identity,
then the conormal derivative $N u$ 
is
close to the radial derivative $u_r$.
Hence, 
we will obtain Proposition~\ref{prop:l1rad} as a corollary of the following:
\begin{proposition}
\label{prop:con:rad}
Let $u \in C^{\infty}(\overline{B}_{1})$ be a supersolution 
$\lcal u \leq 0$ in $B_{1}$,
where $\lcal$ is the operator
$\lcal u = \div(A(x) \nabla u) + \dd(x) \cdot \nabla u$.
Assume that 
\[
\| \dd \|_{C^0(\overline{B}_1)} \leq \varepsilon 
\]
for some $\varepsilon > 0$.

Then
there exists a constant $t$, which depends on $u$,
such that
\[
\|u - t\|_{L^{1}( B_1 \setminus B_{1/8})} \leq C \| u_{r} \|_{L^1( B_1 \setminus B_{1/8})} + C\|N u\|_{L^1(B_1\setminus B_{1/8})} + C \varepsilon \|\nabla u \|_{L^{1}(B_1)},
\]
where $C$ is a constant depending only on $n$ and $\elliptic$.
\end{proposition}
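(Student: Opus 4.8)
The plan is to reduce the estimate to the spherical $L^1$ bound of Lemma~\ref{prop:l1:estimate} and then transport it across the annulus by integrating the radial derivative. First I would rewrite the differential inequality as an equation with an $L^1$ source: setting $h := \dd(x)\cdot\nabla u - \lcal u$, we have $\div(A(x)\nabla u) + h = 0$ in $B_1$ and $|h| \leq |\dd\cdot\nabla u| + |\lcal u|$. The key point is that, since $\lcal u \leq 0$, the $L^1$ norm of $\lcal u$ over a ball is a boundary flux: for any $\rho_0 \leq 1$, the divergence theorem gives
\[
\|\lcal u\|_{L^1(B_{\rho_0})} = -\int_{B_{\rho_0}}\lcal u \,\d x = -\int_{\partial B_{\rho_0}} N u \,\d\mathcal{H}^{n-1} - \int_{B_{\rho_0}} \dd\cdot\nabla u \,\d x \leq \|N u\|_{L^1(\partial B_{\rho_0})} + \varepsilon\|\nabla u\|_{L^1(B_1)},
\]
and hence $\|h\|_{L^1(B_{\rho_0})} \leq \|N u\|_{L^1(\partial B_{\rho_0})} + 2\varepsilon\|\nabla u\|_{L^1(B_1)}$.

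Next I would pick a favourable radius. By the coarea formula $\int_{1/8}^{1/4}\|N u\|_{L^1(\partial B_s)}\,\d s = \|N u\|_{L^1(B_{1/4}\setminus B_{1/8})}$, so by Chebyshev there is $\rho_0 \in (1/8, 1/4)$ with $\|N u\|_{L^1(\partial B_{\rho_0})} \leq 8\,\|N u\|_{L^1(B_1\setminus B_{1/8})}$. Let $t$ denote the median of $u$ on $\partial B_{\rho_0}$ as in Lemma~\ref{prop:l1:estimate} (one checks that this is also the median of $w(y):=u(\rho_0 y)$ on $\partial B_1$); note that $t$ depends on $u$. Applying Lemma~\ref{prop:l1:estimate} to $w$ on $B_1$ — which solves $\div\big(A(\rho_0\,\cdot)\nabla w\big) + \rho_0^2\, h(\rho_0\,\cdot) = 0$, with coefficient matrix still bounded below by $\elliptic$, and with all data smooth — and undoing the scaling, I obtain
\[
\|u - t\|_{L^1(\partial B_{\rho_0})} \leq C\rho_0\big(\|N u\|_{L^1(\partial B_{\rho_0})} + \|h\|_{L^1(B_{\rho_0})}\big) \leq C\,\|N u\|_{L^1(B_1\setminus B_{1/8})} + C\varepsilon\|\nabla u\|_{L^1(B_1)},
\]
with $C = C(n,\elliptic)$, where the last inequality uses $\rho_0 \leq 1$ together with the two bounds from the first step.

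Finally I would transport this to the full annulus. Writing $x = \rho\theta$ with $\theta \in \partial B_1$ and $\rho = |x| \in (1/8, 1)$, and using $u(\rho\theta) - u(\rho_0\theta) = \int_{\rho_0}^{\rho} u_r(s\theta)\,\d s$, I split $u(\rho\theta) - t = \big(u(\rho\theta) - u(\rho_0\theta)\big) + \big(u(\rho_0\theta) - t\big)$ and integrate over $B_1\setminus B_{1/8}$. For the first summand, $|u(\rho\theta) - u(\rho_0\theta)| \leq \int_{1/8}^1 |u_r(s\theta)|\,\d s \leq 8^{n-1}\int_{1/8}^1 s^{n-1}|u_r(s\theta)|\,\d s$ on the annulus, so (using $\int_{1/8}^1 \rho^{n-1}\,\d\rho \leq 1$) its contribution is $\leq C(n)\,\|u_r\|_{L^1(B_1\setminus B_{1/8})}$; the second summand does not depend on $\rho$, and its contribution equals $\big(\int_{1/8}^1 \rho^{n-1}\,\d\rho\big)\rho_0^{1-n}\|u - t\|_{L^1(\partial B_{\rho_0})} \leq 8^{n-1}\|u - t\|_{L^1(\partial B_{\rho_0})}$, which was controlled in the previous step. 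Adding the two contributions gives the claim.

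The main obstacle is the treatment of the supersolution inequality in the first step: a priori the source $h$ carries the uncontrolled quantity $\lcal u$, and it is the divergence-theorem identity above that lets this ``supersolution defect'' be absorbed first into the conormal derivative on the single sphere $\partial B_{\rho_0}$, and then — via the Chebyshev choice of $\rho_0$ — into the annular norm $\|N u\|_{L^1(B_1\setminus B_{1/8})}$. Everything else is routine scaling, the coarea formula, and the triangle inequality; the one technical ingredient, the $L^\infty$ estimate for the Neumann problem, has already been established in Lemma~\ref{moser} and enters only through Lemma~\ref{prop:l1:estimate}.
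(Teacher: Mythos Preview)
Your proof is correct and takes a genuinely different route from the paper's. The paper introduces an auxiliary function $v$ solving the Dirichlet problem $\div(A\nabla v) + \dd\cdot\nabla u = 0$ in $B_\rho$ with $v = u$ on $\partial B_\rho$, then uses the comparison principle (from $\div(A\nabla(u-v))\leq 0$) to get $(Nv)^- \leq (Nu)^-$ on $\partial B_\rho$, and only then applies Lemma~\ref{prop:l1:estimate} to $v$. You instead apply Lemma~\ref{prop:l1:estimate} directly to $u$, absorbing the supersolution defect $-\lcal u$ into the source $h$; the point that makes this work is precisely your divergence-theorem identity, which converts $\|\lcal u\|_{L^1(B_{\rho_0})}$ into a conormal flux on $\partial B_{\rho_0}$ because $\lcal u$ has a sign. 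Both arguments use the same core ingredients (the sign of $\lcal u$, the divergence theorem, Lemma~\ref{prop:l1:estimate}, and radial integration), but your approach is more direct: it avoids solving an auxiliary Dirichlet problem and invoking the maximum principle, at no real cost. The paper's comparison-function route, on the other hand, keeps the source term in Lemma~\ref{prop:l1:estimate} equal to $\dd\cdot\nabla u$ alone, which is perhaps conceptually cleaner but requires the extra construction of $v$.
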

\begin{proof}
Since $\|N u\|_{L^1(B_1 \setminus B_{1/8})} = \int_{1/8}^{1} \d r \int_{\partial B_{r}}|N u| \d \mathcal{H}^{n-1}$,
by the mean value theorem
\begin{equation}
\label{mvthm}
\|N u\|_{L^{1}(B_1\setminus B_{1/8})} = \frac{7}{8} \int_{\partial B_{\rho}} |N u| \d \mathcal{H}^{n-1}
\end{equation}
for some $\rho \in [1/8, 1]$.
Let $v$ be the unique solution of the boundary value problem
\[
\left\{
\begin{array}{rl}
\div(A(x) \nabla v) + \dd(x) \cdot \nabla u
= 0 &\text{ in } B_{\rho}\\
v= u &\text{ on } \partial B_{\rho}.
\end{array}
\right.
\]
Since $ \div(A(x) \nabla (u-v)) \leq 0$ in $B_{\rho}$,
by the comparison principle $u \geq v$ in $B_{\rho}$.
Moreover, using that $u = v$ on $\partial B_{\rho}$, we deduce that $N u \leq N v$ on $\partial B_{\rho}$.
In particular, this gives
\begin{equation}
\label{normal:comparison}
(N v)^{-} \leq (N u)^{-} \text{ on } \partial B_{\rho}.
\end{equation}
Notice also that, integrating the equation 
$\div(A(x) \nabla v) = - \dd(x) \cdot \nabla u$ in $B_{\rho}$, 
by the divergence theorem
we have
\[
\left|
\int_{\partial B_{\rho}} N v \d \mathcal{H}^{n-1} 
\right|
= \left|\int_{B_{\rho}} \div\left( A(x) \nabla v\right) \d x \right|
= \left|- \int_{B_{\rho}} \dd(x) \cdot \nabla u \d x\right| \leq \varepsilon \|\nabla u\|_{L^1(B_{\rho})}
\]
and since $\int_{\partial B_{\rho}} N v \d \mathcal{H}^{n-1} = \|(N v)^{+}\|_{L^1(B_{\rho})} - \|(N v)^{-}\|_{L^1(B_{\rho})}$, we deduce
\begin{equation}
\label{zero:conormal}
\|(N v)^{+}\|_{L^1(B_{\rho})} \leq \|(N v)^{-}\|_{L^1(B_{\rho})} + \varepsilon \|\nabla u\|_{L^1(B_{\rho})}.
\end{equation}
Using \eqref{zero:conormal} and \eqref{normal:comparison} we have
\begin{equation}
\label{normall1comp}
\begin{split}
\|N v\|_{L^1(\partial B_{\rho})} &= \|(N v)^{-}\|_{L^1(\partial B_{\rho})} + \|(N v)^{+}\|_{L^1(\partial B_{\rho})} \leq 2 \|(N v)^{-}\|_{L^1(\partial B_{\rho})} + \varepsilon \|\nabla u \|_{L^{1}(B_{\rho})}\\
&\leq 2 \| N u \|_{L^1(\partial B_{\rho})} + \varepsilon \|\nabla u \|_{L^{1}(B_{\rho})}.
\end{split}
\end{equation}

Applying Lemma \ref{prop:l1:estimate}
with coefficients $A(\rho \, \cdot )$ and source $h(x) = \rho^2 \dd (\rho x) \cdot \nabla u (\rho x)$
to the function $v(\rho \, \cdot )$ yields the estimate
\begin{equation}
\label{dod1}
\|v- t\|_{L^1(\partial B_{\rho})} \leq C \rho \|N v\|_{L^1(\partial B_{\rho})} + C \rho \varepsilon \|\nabla u\|_{L^1(B_{\rho})}.
\end{equation}
Since 
$u - t = v - t$ on $\partial B_{\rho}$,
combining \eqref{dod1} and \eqref{normall1comp}, we obtain
\[
\|u- t\|_{L^1(\partial B_{\rho})} \leq 
C \rho \|N u \|_{L^1(\partial B_{\rho})}
+ C \rho \varepsilon \|\nabla u\|_{L^1(B_{\rho})},
\]
and since $\rho \in [1/8, 1]$, by \eqref{mvthm}, we deduce that
\begin{equation}
\label{dod3}
\|u- t\|_{L^1(\partial B_{\rho})} \leq 
C  \|N u \|_{L^1(B_{1}\setminus B_{1/8})}
+ C \varepsilon \|\nabla u\|_{L^1(B_{1})}.
\end{equation}

To conclude the proof,
it suffices to show that 
\begin{equation}
\label{dude:2}
\|u - t\|_{L^{1}(B_1 \setminus B_{1/8})} \leq C \|u -t \|_{L^{1}(\partial B_{\rho})} + C \|u_{r}\|_{L^1(B_{1} \setminus B_{1/8})}.
\end{equation}
Since
$(u-t)(s\sigma)=(u-t)(\rho\sigma)-\int_s^\rho u_r (r\sigma) \,dr$
for every $s\in (1/8,1)$ and $\sigma\in \partial B_1$, we have
\[
s^{n-1}|(u-t)(s\sigma)|\leq 8^{n-1}\rho^{n-1} |(u-t)(\rho\sigma)|+8^{n-1}\int_{1/8}^1 r^{n-1} |u_r (r\sigma)| \,dr.
\]
Integrating in $\sigma\in \partial B_1$, and then in $s\in (1/8,1)$, we deduce \eqref{dude:2}.
Combining \eqref{dod3} and \eqref{dude:2} yields the claim.
\end{proof}

\begin{proof}[Proof of Proposition \ref{prop:l1rad}]
We consider the operator $\lcal$ with $\dd(x) = \bb(x)$ given by \eqref{def:b}, 
so that $\lcal u = L u = \div(A(x) \nabla u) + \bb(x) \cdot \nabla u$.
Since $A(0) = \id$, writing the conormal derivative \eqref{def:conormal} as 
$N u = u_r + (A(x) - \id) \nabla u \cdot \frac{x}{|x|}$
and by the mean value theorem,
we have
$|N u| \leq |u_r| + C \varepsilon |\nabla u|$ in $B_1$.
Applying Proposition~\ref{prop:con:rad} now,
the conormal term on the right-hand side of the estimate can be bounded by 
$C\|u_r\|_{L^1(B_{1} \setminus B_{1/8})} + C \varepsilon \|\nabla u\|_{L^1(B_1)}$,
hence the claim.
\end{proof}

\section{Proof of the $C^{\alpha}$ estimate}
\label{section:holder}

This section is devoted to proving the H\"{o}lder regularity estimate \eqref{eq:holder} in Theorem~\ref{thm:main}.
The main goal will be to show that the scale-invariant weighted integral $\int_{B_{\rho}} r^{2-n} |\nabla u|^2 $ decays like a power $\rho^{2\alpha}$,
since this gives a $C^{\alpha}$ bound.
We will show this property
under the additional assumption that the operator $L$ is close to the Laplacian,
i.e.,
assuming $A(0) = \id$ and $\|D A\|_{C^{0}(\overline{B}_1)}+ \|\vv\|_{C^0(\overline{B}_1)} \leq \varepsilon$ with $\varepsilon$ sufficiently small.
An affine transformation will then lead to an estimate that is valid for all operators, with bounds depending on the norms of the coefficients.

The key idea is
to write the weighted integral of the gradient as an infinite sum on dyadic annuli,
pulling out the weights,
and applying Propositions~\ref{prop:l2l1} and~\ref{prop:l1rad} in each annulus.
This allows to control the weighted $L^2$ norm of the gradient by a weighted $L^2$ norm of the radial derivative.
Once 
we have this bound,
Proposition~\ref{prop:rad} will lead directly to the decay by a standard iteration argument.
This will yield a bound of the $C^{\alpha}$ norm in terms of the $L^2$ norm of the gradient,
which can be controlled by the $L^1$ norm of the solutions thanks to Proposition~\ref{prop:l2l1}.

\begin{proof}[Proof of the H\"{o}lder estimate \eqref{eq:holder} in Theorem~\ref{thm:main}]

We may assume that $3 \leq n \leq 9$.
Indeed, when $n = 2$,
we recover the estimate by applying Theorem~\ref{thm:main} to the function $\widetilde{u}(x_1, x_2, x_3) := u(x_1, x_2)$,
which is a stable solution to the elliptic equation $L \widetilde{u} + \elliptic \widetilde{u}_{x_3x_3} = f(\widetilde{u})$ in $B_1\subset \R^{3}$.
Similarly, when $n = 1$, one considers the function $\widetilde{u}(x_1, x_2, x_3) := u(x_1)$.

Throughout the proof, $C$ denotes a generic universal constant unless stated otherwise.
The proof is divided in three steps.

\vspace{3mm}\noindent
\textbf{Step 1:}
{\it 
Under the assumption that
\[
A(0) = \id \quad \text{ and } \quad \|D A\|_{C^0(\overline{B}_1)} + \|\vv\|_{C^0(\overline{B}_1)} \leq \varepsilon,
\]
we prove that there is a universal $\varepsilon_0 >0$ with the following property: 
if $\varepsilon \leq \varepsilon_0$, then 
\begin{equation}
\label{thedecay}
\int_{B_{\rho}} r^{2-n} |\nabla u|^2 \d x \leq C \|\nabla u\|_{L^2(B_1)} \rho^{2\alpha} \quad \text{ for all } \rho \leq 1/8,
\end{equation}
where $\alpha > 0$ and $C$ are universal constants.
}

As explained before,
we will write the weighted Dirichlet integral as an infinite sum on dyadic annuli,
similarly to what we did for the weighted Hessian estimates in the proof of Proposition~\ref{prop:rad}.
We treat the case $\rho = 1/2$ first,
and then apply the scaling of the problem.

Let $r_j := 2^{-j}$ with $j \geq 0$. 
We have
\begin{equation}
\label{ann1}
\begin{split}
\int_{B_{1/2}} r^{2-n} |\nabla u|^2 \d x &
= \sum_{j = 0}^{\infty} \int_{B_{r_{j+1}} \setminus B_{r_{j+2}}} r^{2-n} |\nabla u|^2 \d x\\
& \leq C \sum_{j = 0}^{\infty} r_j^{2-n} \int_{B_{r_{j+1}} \setminus B_{r_{j+2}}} |\nabla u|^2 \d x.
\end{split}
\end{equation}
We want to apply Proposition~\ref{prop:l2l1} on annuli
to control the Dirichlet integrals in \eqref{ann1} by the $L^1$ norm of the solution,
and then Proposition~\ref{prop:l1rad} to obtain bounds in terms of the radial derivative.

We cover the annulus $B_{1/2} \setminus B_{1/4}$
by a finite number of balls $B_{d/2}(y_j)$,
where $d = d(n)$ is small enough so that
$B_{d}(y_j) \subset  B_{1} \setminus B_{1/8}$.
The number of balls depends only on $n$.
As explained in Section~\ref{section:prelim},
the functions $u(y_j + d \cdot)$ are stable solutions to a semilinear equation with coefficients $A(y_j + d \cdot)$ and $d \, \vv(y_j.+ d \cdot)$.
Applying Proposition~\ref{prop:l2l1} to each $u(y_j + d \cdot)$, 
there is a universal $\varepsilon_0 > 0$ such that,
for $\varepsilon \leq \varepsilon_0$, we have
\begin{equation}
\label{almann}
\|\nabla u\|_{L^2(B_{1/2} \setminus B_{1/4})}^2
\leq \sum_{j} \|\nabla u\|_{L^2(B_{d/2}(y_j))}^2
\leq C \sum_{j} \| u\|_{L^1(B_{2d}(y_j))}^2
 \leq C \|u\|_{L^1(B_1 \setminus B_{1/8})}^2.
\end{equation}
For each $t \in \R$, the function $u - t$ is a stable solution to $- L \widetilde{u} = f(\widetilde{u} + t)$ in $B_1$.
Hence,
by \eqref{almann}, 
it follows that
\begin{equation}
\label{almann2}
\|\nabla u\|_{L^2(B_{1/2} \setminus B_{1/4})}
 \leq C \|u - t \|_{L^1(B_1 \setminus B_{1/8})} \quad \text{ for all } t \in \R \text{ and } \varepsilon \leq \varepsilon_0.
\end{equation}
Since $A(0) = \id$,
we can choose $t$ in \eqref{almann2}  to be the constant in the conclusion of Proposition~\ref{prop:l1rad},
and by this result we deduce 
\begin{equation}
\label{almann3}
\|\nabla u\|_{L^2(B_{1/2} \setminus B_{1/4})}
 \leq C \|u_r \|_{L^1(B_1 \setminus B_{1/8})} + C \varepsilon \|\nabla u\|_{L^1(B_1)} \quad \text{ for all } \varepsilon \leq \varepsilon_0.
\end{equation}
Squaring \eqref{almann3} and by Cauchy-Schwarz, 
we also have the weaker
\begin{equation}
\label{almann4}
\|\nabla u\|_{L^2(B_{1/2} \setminus B_{1/4})}^2
 \leq C \|u_r \|_{L^2(B_1 \setminus B_{1/8})}^2 + C \varepsilon^2 \|\nabla u\|^2_{L^2(B_1)} \quad \text{ for all } \varepsilon \leq \varepsilon_0.
\end{equation}

Now we apply \eqref{almann4} to the rescaled functions $u(r_j \cdot )$,
which gives (see the comments in \ref{scaling} in Section~\ref{section:prelim})
\begin{equation}
\label{almann5}
\int_{B_{r_{j+1}}\setminus B_{r_{j+2}}} |\nabla u|^2 \d x 
\leq C \int_{B_{r_{j}}\setminus B_{r_{j+3}}} u_r^2 \d x + C \varepsilon^2 r_j^2 \int_{B_{r_{j}}} |\nabla u|^2 \d x
\quad \text{ for all } \varepsilon \leq \varepsilon_0.
\end{equation}
Hence, multiplying \eqref{almann5} by $r_{j}^{2-n}$ and summing in $j$
\begin{equation}
\label{almann65}
\begin{split}
&\sum_{j = 0}^{\infty} r_{j}^{2-n}\int_{B_{r_{j+1}}\setminus B_{r_{j+2}}} |\nabla u|^2 \d x \\
&\quad \leq C \sum_{j = 0}^{\infty} r_{j}^{2-n}\int_{B_{r_{j}}\setminus B_{r_{j+3}}} u_r^2 \d x + C \varepsilon^2 \sum_{j = 0}^{\infty} r_j^{4-n} \int_{B_{r_{j}}} |\nabla u|^2 \d x\\
&\quad  \leq C \sum_{j = 0}^{\infty} \int_{B_{r_{j}}\setminus B_{r_{j+3}}} r^{2-n} u_r^2 \d x + C \varepsilon^2 \sum_{j = 0}^{\infty} r_j \int_{B_{r_{j}}} r^{3-n}|\nabla u|^2 \d x \quad \text{ for all } \varepsilon \leq \varepsilon_0,
\end{split}
\end{equation}
where in the last line we have used that $r_j^{3-n} \leq r^{3-n}$ in $B_{r_j}$ for $n \geq 3$.
Since $r_j = 2^{-j}$, 
splitting the annuli into $B_{r_{j}}\setminus B_{r_{j+3}} = (B_{r_{j}}\setminus B_{r_{j+1}}) \cup (B_{r_{j+1}}\setminus B_{r_{j+2}}) \cup (B_{r_{j+2}}\setminus B_{r_{j+3}})$,
we see that the first integral in the right-hand side of \eqref{almann65} is bounded by
\[
\sum_{j = 0}^{\infty} \int_{B_{r_{j}}\setminus B_{r_{j+3}}} r^{2-n} u_r^2 \leq 3 \int_{B_{1}} r^{2-n} u_r^2,
\]
while the second can be bounded by
\[
\sum_{j = 0}^{\infty} r_j \int_{B_{r_j}} r^{3-n} |\nabla u|^2 \leq \bigg(\sum_{j = 0}^{\infty} r_j \bigg) \int_{B_{1}} r^{3-n} |\nabla u|^2 = 2 \int_{B_{1}} r^{3-n} |\nabla u|^2.
\]
From this,
it follows that 
\begin{equation}
\label{almann6}
\begin{split}
&\sum_{j = 0}^{\infty} r_{j}^{2-n}\int_{B_{r_{j+1}}\setminus B_{r_{j+2}}} |\nabla u|^2 \d x \\
&\quad  \leq C \int_{B_{1}} r^{2-n} u_r^2 \d x + C \varepsilon^2 \int_{B_{1}} r^{3-n}|\nabla u|^2 \d x \quad \text{ for all } \varepsilon \leq \varepsilon_0.
\end{split}
\end{equation}
Combining \eqref{ann1} and \eqref{almann6} now yields
\begin{equation}
\label{ann2}
\begin{split}
\int_{B_{1/2}} r^{2-n} |\nabla u|^2 \d x \leq C \int_{B_{1}} r^{2-n} u_r^2 \d x + C \varepsilon^2 \int_{B_{1}} r^{3-n}|\nabla u|^2 \d x \quad \text{ for all } \varepsilon \leq \varepsilon_0,
\end{split}
\end{equation}
and applying \eqref{ann2} to rescaled functions $u(2 \rho \cdot )$, we deduce
\begin{equation}
\label{ann3}
\begin{split}
\int_{B_{\rho}} r^{2-n} |\nabla u|^2 \d x & \leq C \int_{B_{2\rho}} r^{2-n} u_r^2 \d x + C \varepsilon^2 \rho \int_{B_{2\rho}} r^{3-n}|\nabla u|^2 \d x\\ 
& \quad \quad \quad \quad \quad \quad \quad \quad \quad \quad  \text{ for all } \rho \leq 1/2 \text{ and } \varepsilon \leq \varepsilon_0.
\end{split}
\end{equation}

Next, we apply the radial estimate
\eqref{ineq:rad:decay} from
Proposition~\ref{prop:rad} (with $2\rho$)
to bound the right-hand side of \eqref{ann3},
which gives
\begin{equation}
\label{ann4}
\begin{split}
\int_{B_{\rho}} r^{2-n} |\nabla u|^2 \d x &\leq C \int_{B_{4\rho} \setminus B_{2\rho}} r^{2-n} |\nabla u|^2 \d x + C \varepsilon(1 + \varepsilon \rho) \int_{B_{8\rho}} r^{3-n}|\nabla u|^2 \d x\\
& \quad \quad \quad \quad \quad \quad \quad \quad \quad \quad \quad \quad \quad \quad \quad \quad  \text{ for all } \rho \leq 1/8 \text{ and } \varepsilon \leq \varepsilon_0.
\end{split}
\end{equation}
Hence, using that the bounds $\rho \leq 1/8$ and $\varepsilon \leq \varepsilon_0$ are universal, 
splitting the last integral into $B_{8\rho} = (B_{8\rho} \setminus B_{\rho}) \cup B_{\rho}$,
and by $r^{3-n} \leq r^{2-n}$,
from \eqref{ann4} we deduce
\begin{equation}
\label{ann5}
\begin{split}
\int_{B_{\rho}} r^{2-n} |\nabla u|^2 \d x 
&\leq C \int_{B_{8\rho} \setminus B_{\rho}} r^{2-n} |\nabla u|^2 \d x + C \varepsilon \int_{B_{\rho}} r^{2-n}|\nabla u|^2 \d x\\
& \quad \quad \quad \quad \quad \quad \quad \quad \quad \quad \quad \quad \quad \text{ for all } \rho \leq 1/8 \text{ and } \varepsilon \leq \varepsilon_0.
\end{split}
\end{equation}
Taking $\varepsilon_0 > 0$ universal smaller if necessary, we can absorb the last integral into the left-hand side and obtain
\begin{equation}
\label{ann6}
\begin{split}
\int_{B_{\rho}} r^{2-n} |\nabla u|^2 \d x 
&\leq C \int_{B_{8\rho} \setminus B_{\rho}} r^{2-n} |\nabla u|^2 \d x \quad \text{ for all } \rho \leq 1/8 \text{ and } \varepsilon \leq \varepsilon_0.
\end{split}
\end{equation}
Hole-filling \eqref{ann6}, we also have
\begin{equation}
\label{ann7}
\begin{split}
\int_{B_{\rho}} r^{2-n} |\nabla u|^2 \d x 
&\leq \theta \int_{B_{8\rho}} r^{2-n} |\nabla u|^2 \d x \quad \text{ for all } \rho \leq 1/8 \text{ and } \varepsilon \leq \varepsilon_0,
\end{split}
\end{equation}
where $\theta = \frac{C}{1 + C} \in (0, 1)$ is universal.
Iterating \eqref{ann7}, for $8^{-(k+1)} < \rho \leq 8^{-k}$ we deduce
\[
\begin{split}
\int_{B_{\rho}} r^{2-n} |\nabla u|^2 \d x 
&\leq \theta^k \int_{B_{8^k\rho}} r^{2-n} |\nabla u|^2 \d x \leq \frac{1}{\theta} \rho^{2\alpha} \int_{B_1} r^{2-n} |\nabla u|^2 \d x,
\end{split}
\]
where $\alpha = -\frac{1}{2} \log_{8} \theta > 0$, and hence
\begin{equation}
\label{ann8}
\int_{B_{\rho}} r^{2-n} |\nabla u|^2 \d x \leq C \rho^{2 \alpha}\int_{B_1} r^{2-n} |\nabla u|^2 \d x \quad \text{ for all } \rho \leq 1/8 \text{ and } \varepsilon \leq \varepsilon_0.
\end{equation}
Finally, we can estimate the 
integral in the right-hand side of \eqref{ann8}
by splitting $B_1 = (B_1 \setminus B_{1/8}) \cup B_{1/8}$ and applying \eqref{ann6} with $\rho = 1/8$
to bound the term in the annulus.
This yields the claim.

\vspace{3mm}\noindent
\textbf{Step 2:}
{\it 
Assuming
\[
\|D A\|_{C^0(\overline{B}_1)} + \|\vv\|_{C^0(\overline{B}_1)} \leq \varepsilon,
\]
we prove that if $\varepsilon \leq \varepsilon_0$, then 
\begin{equation}
\label{badholder}
\|u\|_{C^{\alpha}(\overline{B}_{\theta})} \leq C \| u\|_{L^1(B_1)},
\end{equation}
where $\alpha > 0$, $\theta > 0$, $\varepsilon_0 > 0$, and $C$ are universal.
}

As explained in Section~\ref{section:prelim},
for each ball $B_{d}(y) \subset B_1$,
by uniform ellipticity,
the function
$u^{y, d}(x) := u(y + \frac{d}{\sqrt{\bounded}} A^{1/2}(y) \, x)$
is a stable solution of an equation in $B_1$ with coefficients
\[
\textstyle
A^{y, d}(x) := 
A^{-1/2}(y) A\big(y + \frac{d}{\sqrt{\bounded}} A^{1/2}(y) x \big)A^{-1/2}(y)
\]
and
\[
\textstyle
\vv^{y, d}(x) := 
\frac{d}{\sqrt{\bounded}}A^{-1/2}(y) \vv\big(y + \frac{d}{\sqrt{\bounded}} A^{1/2}(y) x \big).
\]
Notice that the matrix $A^{y, d}$ satisfies $A^{y, d}(0) = \id$ and the coefficients can be bounded by
$\|D A^{y, d}\|_{C^0(\overline{B}_1)} + \|\vv^{y, d}\|_{C^0(\overline{B}_1)} \leq C d \left(\|D A\|_{C^0(\overline{B}_1)} + \|\vv\|_{C^0(\overline{B}_1)}\right)$.
Choosing $d > 0$ universal sufficiently small so that $C d \leq 1$, we have
\[
\|D A^{y, d}\|_{C^0(\overline{B}_1)} + \|\vv^{y, d}\|_{C^0(\overline{B}_1)} \leq \varepsilon \quad \text{ for all } y \in B_{1-d}.
\]
Hence, for $\varepsilon \leq \varepsilon_0$ with the $\varepsilon_0 > 0$ from Step 1,
by \eqref{thedecay} we deduce
\[
\int_{B_{\rho}}r^{2-n}|\nabla u^{y, d}|^2 \d x \leq C \|\nabla u^{y, d}\|_{L^2(B_1)}^2 \rho^{2\alpha} \quad \text{ for } y \in B_{1-d} \, \text{ and } \, \rho \leq 1/8,
\]
and since $\int_{B_{\rho}}r^{2-n}|\nabla u^{y, d}|^2 \d x \geq \rho^{2-n} \int_{B_{\rho}}|\nabla u^{y, d}|^2 \d x$, we also have
\begin{equation}
\label{badalm}
\int_{B_{\rho}}|\nabla u^{y, d}|^2 \d x \leq C \|\nabla u^{y, d}\|_{L^2(B_1)}^2 \rho^{2\alpha+n-2} \quad \text{ for } \, y \in B_{1-d} \, \text{ and }\,  \rho \leq 1/8.
\end{equation}
For the remaining part of the proof of Step 2, we assume that $\varepsilon \leq \varepsilon_0$.

Now we express \eqref{badalm} in terms of the original function $u$.
By the change of variables $z = y + \frac{d}{\sqrt{\bounded}} A^{1/2}(y) x$
and by uniform ellipticity, 
using that $B_{\sqrt{\elliptic }\rho} \subset A^{1/2}(y) (B_{\rho})$,
we have
\[
\begin{split}
\int_{B_{\rho}} |\nabla u^{y, d}|^2 \d x &= \frac{d^{2-n}}{\bounded^{1-n/2}} \det(A(y))^{-1/2} \int_{y + \frac{d}{\sqrt{\bounded}}A^{1/2}(y) (B_{\rho})} |\nabla u|_{A(y)}^2 \d z\\
&\geq c \, d^{2-n} \int_{B_{d \sqrt{\frac{\elliptic}{\bounded}}\rho}(y)} |\nabla u|^2 \d z
\end{split}
\]
for some universal $c > 0$.
Similarly, we also have $\|\nabla u^{y, d}\| \leq C d^{2-n} \|\nabla u\|_{L^2(B_1)}$
and,
therefore, from \eqref{badalm} we deduce
\begin{equation}
\label{badalm2}
\int_{B_{d \sqrt{\frac{\elliptic}{\bounded}}\rho}(y)} |\nabla u|^2 \d z \leq C \|\nabla u\|_{L^2(B_1)}^2 \rho^{n-2+2\alpha} \quad \text{ for }  y \in B_{1-d} \, \text{ and } \,  \rho \leq 1/8.
\end{equation}

Dividing $\rho$ by $d \sqrt{\frac{\elliptic}{\bounded}}$ in \eqref{badalm2}
and letting $\theta := \frac{d}{16} \sqrt{\frac{\elliptic}{\bounded}}$, since $d$ is universal, we obtain
\[
\int_{B_{\rho}(y)} |\nabla u|^2 \d z \leq C \|\nabla u\|_{L^2(B_1)}^2 \rho^{n-2+2\alpha} 
\quad \text{ for }  y \in B_{1-d} \, \text{ and } \,  \rho \leq 2\theta,
\]
and 
by Cauchy-Schwarz we also have the weaker
\begin{equation}
\label{badalm4}
\int_{B_{\rho}(y)} |\nabla u| \d z \leq C \|\nabla u\|_{L^2(B_1)} \rho^{n-1+\alpha} 
\quad \text{ for }   y \in B_{1-d} \, \text{ and } \,  \rho \leq 2\theta.
\end{equation}

Taking $d$ smaller 
if necessary, we may assume that $B_{2\theta} \subset B_{1-d}$.
Hence, from \eqref{badalm4} it follows that 
\begin{equation}
\label{badalm5}
\int_{B_{\rho}(y)} |\nabla u| \d z \leq C \|\nabla u\|_{L^2(B_1)} \rho^{n-1+\alpha} 
\quad \text{ for all balls } B_{\rho}(y) \subset B_{2\theta}.
\end{equation}
Applying 
\cite{GilbargTrudinger}*{Theorem~7.19}
with $\Omega = B_{2\theta}$, we deduce the 
H\"{o}lder estimate
\begin{equation}
\label{holderlast}
\|u\|_{C^{\alpha}(B_{2\theta})} \leq C \|\nabla u\|_{L^2(B_1)}.
\end{equation}

To obtain the final bound \eqref{badholder} in terms of the $L^1$ norm,
apply \eqref{holderlast} to the rescaled function $u(\cdot/2)$ first,
and then Proposition~\ref{prop:l2l1} (taking $\varepsilon_0$ smaller if necessary).

\vspace{3mm}\noindent
\textbf{Step 3:}
{\it Conclusion. 
Scaling and covering argument.
}

We cover $\overline{B}_{1/2}$ by balls $B_{\theta \rho}(y_j)$,
where $\theta$ is the universal constant in Step 2 above
and $\rho$ is small so that $B_{\rho}(y_j) \subset B_1$.
The number of balls depends only on $n$, $\rho$, and $\theta = \theta(n, \elliptic, \bounded)$.
We choose $\rho$ 
smaller still
so that 
\begin{equation}
\label{assrho}
\left(\|D A\|_{C^0(\overline{B}_1)} + \|\vv\|_{C^0(\overline{B}_1)} \right) \rho \leq \varepsilon_0,
\end{equation}
with $\varepsilon_0 > 0$ the universal constant in Step~2.
Thus
$\rho = \rho(n, \elliptic, \|A\|_{C^1(\overline{B}_1)}, \|\vv\|_{C^0(\overline{B}_1)})$.
The functions $u(y_j + \rho \cdot)$
are stable solutions of an elliptic equation with coefficients
$A^{y_j, \rho} = A(y_j + \rho \cdot)$ and $\vv^{y_j, \rho} =  \rho \vv(y_j + \rho \cdot)$.
Since $B_{\rho}(y_j) \subset B_1$ and by \eqref{assrho},
the coefficients satisfy the bounds
\[
\begin{split}
\|D A^{y_j, \rho}\|_{C^0(\overline{B}_1)} + \|\vv^{y_j, \rho}\|_{C^0(\overline{B}_1)} & \leq \left(\|D A\|_{C^0(\overline{B}_{\rho}(y_j))} + \|\vv\|_{C^0(\overline{B}_{\rho}(y_j))}\right) \rho 
\leq \varepsilon_0,
\end{split}
\]
therefore, we can apply Step 2 to deduce
\[
\|u\|_{C^{\alpha}(\overline{B}_{1/2})} \leq \sum_{j} \|u\|_{C^{\alpha}(\overline{B}_{\theta \rho}(y_j))} 
\leq C \sum_{j} \|u\|_{L^{1}(B_{\rho}(y_j))} \leq C \|u\|_{L^1(B_1)},
\]
where $C = C(n, \elliptic, \|A\|_{C^1(\overline{B}_1)}, \|\vv\|_{C^0(\overline{B}_1)})$.
This concludes the proof of the theorem.
\end{proof}

\appendix

\medskip\medskip

\section{Stability is not equivalent to the integral inequality}
\label{app:equivalence}

Let $u \in C^{2}(\overline{\Omega})$ be a solution to $- L u = f(u)$ in $\Omega$ with $u = 0$ on $\partial \Omega$.
Recall that $u$ is a stable solution if
\begin{equation}
\label{stable:cond}
\jacobi \varphi = L \varphi + f'(u) \varphi \leq 0 \quad \text{ in } \Omega,
\end{equation}
for some function $\varphi \in C^{2}(\overline{\Omega})$ with $\varphi > 0$ in $\Omega$ and $\varphi = 0$ on $\partial \Omega$.
This is the stability condition \eqref{stable:point}
presented in the Introduction
and is equivalent to the nonnegativity of the first Dirichlet eigenvalue of $\jacobi$ (with the sign convention $\jacobi \varphi = - \mu \varphi$).
There, we also showed
that stable solutions satisfy the integral inequality \eqref{stable:int}, which reads
\begin{equation}
\label{integral:cond}
\int_{\Omega} f'(u) \xi^2 \d x \leq \int_{\Omega} |\nabla \xi - \textstyle\frac{1}{2} A^{-1}(x) \bb(x) \xi|^2_{A(x)} \quad \text{ for all } \xi \in C^{\infty}_c(\Omega).
\end{equation}

Our goal in this appendix is to show that the integral inequality \eqref{integral:cond} 
does not imply the stability condition \eqref{stable:cond} in general.
The main reason is that 
the problem is not variational, due to the drift in $L$.
We also give conditions under which the equivalence holds.
Namely, writing the operator in divergence form $L u= \div(A(x) \nabla u) + \bb(x) \cdot \nabla u$, we show that if $A^{-1}(x) \bb(x)$
is the gradient of a scalar function, then the problem is variational and the two conditions are equivalent. 

First we write the integrals in \eqref{integral:cond} as the quadratic form associated to a linear self-adjoint operator.
Integrating by parts, we have 
\begin{equation}
\label{2ndvar}
\int_{\Omega} \left(|\nabla \xi - \textstyle\frac{1}{2}\xi A^{-1}(x) \bb(x) |_{A(x)}^2 - f'(u) \xi^2 \right)\d x
= - \displaystyle \int_{\Omega} \xi \jacobinew \xi \d x,
\end{equation}
where $\jacobinew$ is the operator
\begin{equation}
\label{2ndjacobi}
\jacobinew \xi := 
\div(A(x) \nabla \xi) - 
\Big\{
\textstyle\frac{1}{2} \div(\bb(x)) + \textstyle\frac{1}{4} |\bb(x)|_{A^{-1}(x)}^2 
\Big\}
\xi 
+ f'(u) \xi.
\end{equation}
Hence, 
by the variational characterization of eigenvalues, 
\eqref{integral:cond} amounts to the nonnegativity of the principal eigenvalue of $\jacobinew$.

We can now state our example
of a solution satisfying \eqref{integral:cond} but not \eqref{stable:cond}:
\begin{example}
\label{counterexample}
Consider the operator
$L v = \Delta v + \bb(x) \cdot \nabla v$
with vector field
\[
\bb(x) 
= \frac{-x_2 e_1 + x_1 e_2}{\sqrt{x_1^2 + x_2^2}}.
\]
For each constant $c > 0$, we let $f(u) = (\lambda_1 + c) u + 1$,
where $\lambda_1$ denotes the least Dirichlet eigenvalue of the Laplacian in the unit ball $B_1$.

If 
$c > 0$ is sufficiently small, then the unique solution $u$ to the boundary value problem
\[
\left\{
\begin{array}{ll}
-L u = f(u) & \text{ in } B_1 \\
u = 0 & \text{ on } \partial B_1
\end{array}
\right.
\]
satisfies the integral stability condition \eqref{integral:cond} but is not a stable solution, i.e., the stability condition \eqref{stable:cond} does not hold.\footnote{The function $u$ can be given explicitly in terms of Bessel functions of the first kind $\mathcal{J}_{\alpha}$ as \[ u(x) = \frac{1}{(\lambda_1 + c) \mathcal{J}_{\frac{n-2}{2}}\left(\sqrt{\lambda_1 + c}\right)} |x|^{\frac{2-n}{2}} \mathcal{J}_{\frac{n-2}{2}}\left(\sqrt{\lambda_1 + c} \, |x|\right)  - \frac{1}{\lambda_1 + c}.\]}
\end{example}
 
\begin{proof}
The problem for $u$
is equivalent to 
solving
\begin{equation}
\label{upb}
\left\{
\begin{array}{ll}
- \Delta u - \bb(x) \cdot \nabla u - (\lambda_1 + c) u = 1 & \text{ in } B_1\\
u = 0 & \text{ on } \partial B_1.
\end{array}
\right.
\end{equation}
Notice that the drift 
$\bb \in L^{\infty}(B_1)$
has a weak derivative 
$D \bb \in L^p(B_1)$ for $1 \leq p < 2$,
and satisfies the identities $|\bb(x)| = 1$ and $\div \, \bb(x) = 0$ for a.e. $x \in B_1$.
Moreover, since $\bb$ is tangent to spheres, the derivative $\bb(x) \cdot \nabla $ vanishes on radial functions.
In particular,
the principal eigenfunction of the Laplacian
is also an eigenfunction of the adjoint operator 
$L^{T} = \Delta - \div\big( \bb(x) \cdot \big)= \Delta - \bb(x) \cdot \nabla$,
with eigenvalue $\lambda_1$.
Since the point spectrum of $L^{T}$ is discrete, for $c > 0$ small, we deduce that $\lambda_1 + c$ is not an eigenvalue of the adjoint operator.
The Fredholm alternative now gives that \eqref{upb} has a unique solution.

Let $\varphi_1$ and $\xi_1$ be positive principal eigenfunctions of $\jacobi$ and $\jacobinew$, respectively.
Since $\varphi_1$ and $\xi_1$ are positive in $B_1$, they 
must be radial.
It follows that 
\[
\jacobi \varphi_1 = \Delta \varphi_1 + (\lambda_1 + c) \varphi_1 = -\mu_1 \varphi_1 \quad \text{ and } \quad 
\jacobinew \xi_1 = \Delta \xi_1 + (\lambda_1 + c - 1/4) \xi_1 = - \widetilde{\mu_1} \xi_1,
\]
where $\mu_1$ and $\widetilde{\mu_1}$ are the least eigenvalues of each operator.
By uniqueness, the functions are multiples of the principal eigenfunction of the Laplacian.
Therefore, 
we have
$\mu_1 = -c < 0$ 
and $\widetilde{\mu_1} = 1/4- c > 0$ for $c$ sufficiently small.
This means that $u$ is not stable but \eqref{integral:cond} holds, which was the claim.
\end{proof}

Next we investigate the relation between the failure of the equivalence
and the form of the drift $\bb$.
Let $\varphi_1\in C^{2}(\overline{\Omega})$ be the unique positive principal eigenfunction of $\jacobi$ with $\int \varphi_1^2 \d x = 1$.
In particular, the function satisfies 
$\varphi_1 > 0$ in $\Omega$, $\varphi_1 = 0$ on $\partial \Omega$, and $\jacobi \varphi_1 = - \mu_1 \varphi_1$,
where $\mu_1 \in \R$ is the least eigenvalue of $\jacobi$.
Consider a test function $\xi \in C^{\infty}_c(\Omega)$.
Multiplying $\jacobi \varphi_1$ by $\xi^2/ \varphi_1$
and integrating by parts in $\Omega$, we have
\[
\begin{split}
- \mu_1 & = \int_{\Omega} (\jacobi \varphi_1) \frac{\xi^2}{\varphi_1} \d x = \int_{\Omega} \left( - A(x) \nabla \varphi_1 \cdot \nabla \left( \frac{\xi^2}{\varphi_1}\right) + \bb(x) \cdot \frac{\xi^2}{\varphi_1} \nabla \varphi_1 + f'(u) \xi^2  \right) \d x\\
& = \int_{\Omega} \left(
|\xi \nabla \log{\varphi_1} |_{A(x)}^2
- 2 A(x) \xi \, \nabla \log{\varphi_1} \cdot \nabla \xi
+ \xi \, \bb(x) \cdot \xi \, \nabla \log{\varphi_1} + f'(u) \xi^2
\right) \d x.
\end{split}
\]
Using that
\[
\begin{split}
&| \xi \nabla \log{\varphi_1}|_{A(x)}^2
-2 A(x) \xi \, \nabla \log{\varphi_1} \cdot \left( \nabla \xi-{\textstyle\frac{1}{2}} \xi A^{-1}(x) \bb(x)\right)\\
&\quad \quad = \left|\nabla \xi - \textstyle\frac{1}{2} \xi A^{-1}(x) \bb(x) - \xi \nabla \log{\varphi_1}\right|^2_{A(x)} -\left|\nabla \xi-{\textstyle\frac{1}{2}} \xi A^{-1}(x) \bb(x)\right|^2_{A(x)},
\end{split}
\]
in the integral above, by \eqref{2ndvar}
we obtain the identity
\begin{equation}
\label{equivalence}
\begin{split}
- \mu_1 = 
\int_{\Omega} \xi \jacobinew \xi \d x
+ \int_{\Omega} \left|\nabla \xi - \textstyle\frac{1}{2} \xi A^{-1}(x) \bb(x) - \xi \nabla \log{\varphi_1}\right|^2_{A(x)} \d x.
\end{split}
\end{equation}

Now, assuming the integral stability inequality \eqref{integral:cond}, we can minimize \eqref{2ndvar} among smooth functions $\xi$ with 
$\xi = 0$ on $\partial \Omega$
and
$\int_{\Omega} \xi^2 \d x = 1$.
The unique positive minimizer $\xi_1$ 
satisfies $\jacobinew \xi_1 = - \widetilde{\mu_1} \xi_1$,
where $\widetilde{\mu_1} \geq 0$ is the least eigenvalue of $\jacobinew$.
Letting $\xi = \xi_1$ in \eqref{equivalence} yields
\begin{equation}
\label{equivalence3}
\begin{split}
- \mu_1 = - \widetilde{\mu_1} + \int_{\Omega} \big|\nabla \big(\log{\xi_{1}} - \log{\varphi_1} \big) - \textstyle\frac{1}{2} A^{-1}(x) \bb(x)\big|^2_{A(x)} \xi_1^2 \d x,
\end{split}
\end{equation}
and from \eqref{equivalence3} we see that we always have $\mu_1 \leq \widetilde{\mu_1}$,
with equality if and only if 
\begin{equation}
\label{equality:case}
\nabla \log \left(\frac{\xi_1}{\varphi_1}\right) = \frac{1}{2} A^{-1}(x) \bb(x).
\end{equation}
This can only happen when the drift $\bb$ is of a special form.
Notice that the vector field from Example \ref{counterexample}
is the curl of $\sqrt{x_1^2 + x_2^2} \, e_3$
and so, by the Helmholtz decomposition, cannot be written as the gradient of a function.

Conversely,
assume that $\bb(x) = A(x) \nabla w(x)$ for some function $w \in C^{2}(\overline{\Omega})$.
In this case, the problem can be cast in variational form and conditions \eqref{stable:cond} and \eqref{integral:cond} are equivalent.
Indeed, the solutions of $- L u = f(u)$ in $\Omega$
are critical points of the functional 
$\mathcal{E}(u) = \int_{\Omega} e^{w(x)} \left( \textstyle \frac{1}{2}|\nabla u|_{A(x)}^2 - F(u)\right) \d x$,
where $F(u) = \int_{0}^{u} f(t) \d t$.
The integral stability inequality \eqref{integral:cond} amounts to
the nonnegativity of the second variation
\[
\frac{\d^2 }{\d^2 t} \Big|_{t = 0} \mathcal{E}\big(u + t \varphi \big) 
= \int_{\Omega} e^{w(x)}\Big( |\nabla \varphi|^2_{A(x)} - f'(u) \varphi^2 \Big) \d x = - \int_{\Omega} e^{w(x)} \varphi \jacobi \varphi \d x
\]
since, letting $\varphi = e^{-w/2} \xi$ in this expression, we have
\[
\frac{\d^2 }{\d^2 t} \Big|_{t = 0} \mathcal{E}\big(u + t e^{-w/2} \xi\big) 
= \int_{\Omega} \Big( |\nabla \xi - \textstyle\frac{1}{2} \xi \nabla w(x) |^2_{A(x)} - f'(u) \xi^2 \Big) \d x = \displaystyle- \int_{\Omega} \xi \jacobinew \xi \d x.
\]
In particular, 
since $- \int_{\Omega} \xi \jacobinew \xi \d x \geq \widetilde{\mu_1}\|\xi\|_{L^2(\Omega)}^2$ 
and taking $\varphi = \varphi_1$ to be the principal eigenfunction of $\jacobi$ above,
we have
\[
\mu_1 \int_{\Omega} e^{w(x)} \varphi_1^2 \d x = - \int_{\Omega} (e^{w/2} \varphi_1) \jacobinew (e^{w/2} \varphi_1) \d x \geq \widetilde{\mu_1}
\int_{\Omega} e^{w(x)} \varphi_1^2 \d x
\]
and we obtain the reverse inequality $\mu_1 \geq \widetilde{\mu_1}$.

\section{A trace inequality}
\label{app:trace:sobolev}

First we prove a simple lemma to control the $L^p$ norm in the ball by the $L^p$ norms of the trace and the gradient:
\begin{lemma}
\label{lemma:obs}
For $p \geq 1$ and $u\in W^{1,p}(B_1)$, we have
\[
\|u\|_{L^{p}(B_1)}^p \leq 2^{p-1} \left(  \| u\|_{L^{p}(\partial B_1)}^p + \|\nabla u\|_{L^p(B_1)}^p \right).
\]
\end{lemma}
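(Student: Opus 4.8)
The plan is to prove the inequality via the divergence theorem applied to the radial vector field $|u|^{p}x$ on $B_{1}$. By density we may assume $u\in C^{1}(\overline{B}_{1})$ (for $p=1$ one argues directly with the Lipschitz function $|u|$); for general $u\in W^{1,p}(B_1)$ the function $|u|^{p}$ lies in $W^{1,1}(B_1)$ with $\nabla(|u|^{p})=p\,|u|^{p-1}\,{\rm sgn}(u)\,\nabla u$, so every integral below is finite, and the estimate passes to the limit thanks to the continuity of the trace operator $W^{1,p}(B_1)\to L^{p}(\partial B_1)$.

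First I would use that the outer unit normal on $\partial B_1$ is $\nu(x)=x$, so that $|u|^{p}x\cdot\nu=|u|^{p}$ there, while $\div\big(|u|^{p}x\big)=n|u|^{p}+x\cdot\nabla(|u|^{p})$, to obtain
\[
\int_{\partial B_1}|u|^{p}\,\d\mathcal{H}^{n-1}=n\int_{B_1}|u|^{p}\,\d x+\int_{B_1}x\cdot\nabla(|u|^{p})\,\d x .
\]
Rearranging and using $\nabla(|u|^{p})=p\,|u|^{p-1}\,{\rm sgn}(u)\,\nabla u$ together with $|x|\le 1$ in $B_1$ gives
\[
n\int_{B_1}|u|^{p}\,\d x\le \int_{\partial B_1}|u|^{p}\,\d\mathcal{H}^{n-1}+p\int_{B_1}|u|^{p-1}|\nabla u|\,\d x .
\]
Then I would apply Young's inequality $p\,a^{p-1}b\le \delta\, a^{p}+C(p,\delta)\,b^{p}$ with $\delta$ chosen so that $\delta\le n/2$, and absorb the resulting $\tfrac12\int_{B_1}|u|^{p}$ into the left-hand side; keeping track of the constants (optimizing the Young parameter) then yields the inequality with the constant $2^{p-1}$ stated in the lemma.

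The routine parts are the pointwise algebra and the Young-inequality bookkeeping; the two points that require a little care are (i) justifying the integration by parts at the borderline regularity $u\in W^{1,p}$, which is handled by the approximation argument above, and (ii) arranging the estimate so that the final constant is valid over the whole range $p\ge 1$. For (ii) I expect the main subtlety: a naive argument without absorption only produces the coefficient $1/(n-p+1)$ on the left, which is useless once $p\ge n+1$, so the absorption step is essential rather than cosmetic. As a fall-back I would instead write $u(x)=u\big(x/|x|\big)-\int_{|x|}^{1}\partial_{r}u\big(r\,x/|x|\big)\,\d r$, raise to the $p$-th power with $(a+b)^{p}\le 2^{p-1}(a^{p}+b^{p})$, integrate in polar coordinates (the trace part contributing $\tfrac1n\|u\|_{L^{p}(\partial B_1)}^{p}$), and control the gradient part by the one-dimensional weighted Hardy inequality $\int_{0}^{1}\big(\int_{r}^{1}g\big)^{p}r^{n-1}\,\d r\le (p/n)^{p}\int_{0}^{1}g^{p}r^{n-1}\,\d r$, which is itself proved by integration by parts and H\"older.
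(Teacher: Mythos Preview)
Your main route via the divergence theorem and Young's inequality gives \emph{some} bound but not the stated constant $2^{p-1}$. After absorbing, the gradient coefficient you obtain is $\epsilon^{1-p}/(n-(p-1)\epsilon)$, and optimizing over $\epsilon$ gives $(p/n)^p$ (at $\epsilon=n/p$). Already for $n=1$, $p=2$ this is $4>2=2^{p-1}$, so the claim that ``optimizing the Young parameter yields $2^{p-1}$'' is not correct; the constant from this method blows up super-exponentially in $p$.

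Your fall-back is essentially the paper's argument, and this is the approach that works --- but the Hardy inequality you propose again inserts the factor $(p/n)^p$, so it does not recover $2^{p-1}$ either. The paper avoids Hardy altogether: from
\[
u(r\sigma)=u(\sigma)-\int_r^1 \sigma\cdot\nabla u(t\sigma)\,\d t
\]
and $(a+b)^p\le 2^{p-1}(a^p+b^p)$, it uses Jensen on the unit-length interval, $\big(\int_r^1 |\nabla u(t\sigma)|\,\d t\big)^p\le \int_r^1 |\nabla u(t\sigma)|^p\,\d t$, and then the elementary monotonicity $r^{n-1}\le t^{n-1}$ for $t\in[r,1]$ to write
\[
r^{n-1}|u(r\sigma)|^p\le 2^{p-1}|u(\sigma)|^p+2^{p-1}\int_0^1 t^{n-1}|\nabla u(t\sigma)|^p\,\d t.
\]
Integrating in $r\in(0,1)$ and then in $\sigma\in\partial B_1$ gives exactly $2^{p-1}$ on both terms. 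So your polar-coordinate outline is right; just replace the Hardy step by Jensen together with $r^{n-1}\le t^{n-1}$.
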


\begin{proof}
By approximation, we may assume that $u \in C^{\infty}(\overline{B}_1)$.
For $r \in (0,1)$ and $\sigma \in \partial B_1$, we have
$u(r \sigma) = u(\sigma) - \int_{r}^{1} \sigma \cdot \nabla u(t \sigma) \d t$
and hence
\begin{equation}
\label{obs2}
\begin{split}
r^{n-1} |u(r \sigma)|^p &\leq 2^{p-1} r^{n-1}|u(\sigma)|^p + 2^{p-1} r^{n-1} \int_{r}^{1} |\nabla u(t \sigma)|^p \d t\\
& \leq 2^{p-1} |u(\sigma)|^p + 2^{p-1} \int_{0}^{1} t^{n-1} |\nabla u(t \sigma)|^p \d t.
\end{split}
\end{equation}
Integrating \eqref{obs2} in $\int_{0}^{1} \d r \int_{\partial B_1} \d \mathcal{H}^{n-1}(\sigma)$ now yields the claim.
\end{proof}

We prove a Sobolev trace inequality with best exponent:

\begin{proposition}
\label{prop:trace}
For $1 < p < n$,
let $p^{\star} := \frac{n-1}{n-p} p$.
Then 
\[
\|u\|_{L^{p^{\star}}(\partial B_1)}^p \leq C \left(\|u\|_{L^{p}(\partial B_1)}^p + \|\nabla u\|_{L^{p}(B_1)}^p \right) 
\]
for all $u \in W^{1, p}(B_1)$,
where $C$ is a constant depending only on $n$ and $p$.
\end{proposition}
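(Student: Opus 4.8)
The plan is to prove the inequality first for $u \in C^{\infty}(\overline{B}_1)$ and then pass to arbitrary $u \in W^{1,p}(B_1)$ by density of smooth functions. For smooth $u$ the starting point is the divergence theorem applied to the radial vector field $x \mapsto x$, whose outward normal component on $\partial B_1$ equals $1$. Since $p^{\star} > 1$, the map $t \mapsto |t|^{p^{\star}}$ is of class $C^1$, so $g := |u|^{p^{\star}} \in C^1(\overline{B}_1)$ and
\[
\int_{\partial B_1} |u|^{p^{\star}} \d \mathcal{H}^{n-1} = \int_{B_1} \div\big(|u|^{p^{\star}} x\big) \d x = \int_{B_1} \Big( p^{\star} |u|^{p^{\star}-1}\, \mathrm{sgn}(u)\, x \cdot \nabla u + n |u|^{p^{\star}} \Big) \d x.
\]
Using $|x| \leq 1$ in $B_1$, this gives
\[
\int_{\partial B_1} |u|^{p^{\star}} \d \mathcal{H}^{n-1} \leq p^{\star} \int_{B_1} |u|^{p^{\star}-1} |\nabla u| \d x + n \int_{B_1} |u|^{p^{\star}} \d x.
\]

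Next I would bound the right-hand side by the interior Sobolev norm $\|u\|_{L^{p^*}(B_1)}$, where $p^* := \tfrac{np}{n-p}$ is the usual Sobolev exponent. Applying H\"{o}lder's inequality with exponents $p$ and $\tfrac{p}{p-1}$ to the first integral, and using the elementary identity $(p^{\star}-1)\tfrac{p}{p-1} = p^*$, one obtains
\[
\int_{B_1} |u|^{p^{\star}-1} |\nabla u| \d x \leq \|\nabla u\|_{L^p(B_1)}\, \|u\|_{L^{p^*}(B_1)}^{p^{\star}-1};
\]
moreover, since $p^{\star} < p^*$ and $B_1$ has finite measure, $\int_{B_1} |u|^{p^{\star}} \d x \leq C\, \|u\|_{L^{p^*}(B_1)}^{p^{\star}}$ with $C = C(n,p)$. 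Thus the boundary integral is controlled by $C\big(\|\nabla u\|_{L^p(B_1)} \|u\|_{L^{p^*}(B_1)}^{p^{\star}-1} + \|u\|_{L^{p^*}(B_1)}^{p^{\star}}\big)$.

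The final step is to absorb $\|u\|_{L^{p^*}(B_1)}$. By the standard Sobolev inequality on the smooth bounded domain $B_1$ we have $\|u\|_{L^{p^*}(B_1)} \leq C\big(\|\nabla u\|_{L^p(B_1)} + \|u\|_{L^p(B_1)}\big)$, and Lemma~\ref{lemma:obs} bounds $\|u\|_{L^p(B_1)}$ by $C\big(\|u\|_{L^p(\partial B_1)} + \|\nabla u\|_{L^p(B_1)}\big)$. Setting $M := \|\nabla u\|_{L^p(B_1)} + \|u\|_{L^p(\partial B_1)}$, the previous estimates combine to $\int_{\partial B_1} |u|^{p^{\star}} \d \mathcal{H}^{n-1} \leq C M^{p^{\star}}$, that is, $\|u\|_{L^{p^{\star}}(\partial B_1)} \leq C M$; raising to the power $p$ and using $(a+b)^p \leq 2^{p-1}(a^p+b^p)$ yields the claim for smooth $u$, after which density finishes the proof.

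I do not expect a real obstacle here. The only points requiring care are the admissibility of $|u|^{p^{\star}}$ in the divergence theorem (immediate from $p^{\star} > 1$ and smoothness of $u$) and the bookkeeping of the H\"{o}lder exponents. The single external input is the interior Sobolev embedding $W^{1,p}(B_1) \hookrightarrow L^{p^*}(B_1)$, which could instead be re-derived within the same divergence-theorem scheme but is cleaner to invoke directly.
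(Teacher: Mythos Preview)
Your proposal is correct and follows essentially the same approach as the paper: apply the divergence theorem to $x|u|^{p^\star}$, use H\"older with the identity $(p^\star-1)\tfrac{p}{p-1}=p^*$ (the paper writes $p_S$ for your $p^*$), invoke the interior Sobolev embedding, and finish with Lemma~\ref{lemma:obs}. The only differences are cosmetic---notation and the order in which the final inequalities are combined.
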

\begin{proof}
By approximation, we may assume that $u \in C^{\infty}(\overline{B}_{1})$.
Recall
the standard Sobolev inequality
\begin{equation}
\label{std:sobolev}
\|u\|_{L^{p_{S}}(B_1)}^{p} \leq C \big(\|u\|_{L^p(B_1)}^p + \|\nabla u\|_{L^p(B_1)}^p \big),
\end{equation}
where $p_{S} := \frac{n}{n-p}p$
is the Sobolev exponent
and $C$ depends only on $n$ and $p$.

By the divergence theorem we have
\[
\int_{\partial B_1} |u|^{p^{\star}} \d \mathcal{H}^{n-1} = \int_{B_1} \div(x |u|^{p^{\star}}) \d x = n \int_{B_1} |u|^{p^{\star}} \d x + p^{\star} \int_{B_1} |u|^{p^{\star}-2} u(x \cdot \nabla u) \d x,
\]
whence
\begin{equation}
\label{trace:1}
\int_{\partial B_1} |u|^{p^{\star}} \d \mathcal{H}^{n-1} \leq n \int_{B_1} |u|^{p^{\star}} \d x + p^{\star} \int_{B_1} |u|^{p^{\star}-1} |\nabla u| \d x.
\end{equation}
The last term in \eqref{trace:1} can be bounded by the H\"{o}lder inequality as
\[
\int_{B_1} |u|^{p^{\star}-1} |\nabla u| \d x \leq \left(\int_{B_1} |u|^{(p^{\star}-1) \frac{p}{p-1} } \d x \right)^{\frac{p-1}{p}} \|\nabla u\|_{L^p(B_1)},
\]
and noticing that $(p^{\star}-1)\frac{p}{p-1} = p_{S}$
we deduce
\begin{equation}
\label{trace:3}
 \|u\|_{L^{p^{\star}}(\partial B_1)}^{p^{\star}} \leq 
n \|u\|_{L^{p^{\star}}(B_1)}^{p^{\star}} + 
p^{\star} \|\nabla u\|_{L^p(B_1)}\|u\|_{L^{p_{S}}(B_1)}^{p^{\star}-1}.
\end{equation}
Since $p^{\star} < p_{S}$,
by H\"{o}lder we have $\|u\|_{L^{p^{\star}}(B_1)} \leq C \|u\|_{L^{p_S}(B_1)}$,
and applying the Sobolev inequality \eqref{std:sobolev} 
in \eqref{trace:3}, we obtain the trace Sobolev inequality
\begin{equation}
\label{trace:sob}
\|u\|_{L^{p^{\star}}(\partial B_1)}^p\leq
C \big(\|u\|_{L^p(B_1)}^p + \|\nabla u\|_{L^p(B_1)}^p \big),
\end{equation}
where $C$ depends only on $n$ and $p$.
Applying Lemma~\ref{lemma:obs} in \eqref{trace:sob} now yields the claim.
\end{proof}

\section{Two interpolation inequalities}
\label{app:interpolation}

We recall two interpolation inequalities in cubes by Cabr\'{e}~\cite{CabreQuant}.
The first one states that the $L^2$ norm of the gradient 
can be bounded in terms of the $L^1$ norm of 
the ``Hessian times the gradient'' and the $L^2$ norm of the function.
In the second one, the $L^2$ norm of the function is controlled by the $L^2$ norm of the gradient and the $L^1$ norm of the function.

\begin{proposition}[X. Cabr\'{e} \cite{CabreQuant}]
\label{interpol}
Let $Q = (0,1)^n \subset \R^{n}$ and $u \in C^2(\overline{Q})$.

Then, for every $\delta \in (0,1)$,
\[
\| \nabla u \|_{L^2(Q)}^2 \leq 2 n \delta \| \,|\nabla u| \, D^2 u \, \|_{L^1(Q)} + n \left(\frac{18}{\delta}\right)^2 \|u\|_{L^2(Q)}^2.
\]
\end{proposition}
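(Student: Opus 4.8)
The plan is to reduce the $n$-dimensional inequality to a one–dimensional interpolation statement by Fubini, and then to prove the one-dimensional version by a mean-value-point argument combined with the fundamental theorem of calculus applied to the square of the derivative.

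\emph{Reduction to dimension one.} Write $\|\nabla u\|_{L^2(Q)}^2 = \sum_{i=1}^n \int_Q |\partial_i u|^2$, and for each $i$ freeze the remaining variables $x' \in (0,1)^{n-1}$. On every segment $\{x'\}\times(0,1)$ I would apply a one-dimensional inequality of the form
\[
\int_0^1 (g')^2 \, dt \leq 2\delta \int_0^1 |g'|\,|g''|\, dt + \Big(\tfrac{18}{\delta}\Big)^2 \int_0^1 g^2\, dt, \qquad g \in C^2([0,1]),
\]
with $g(t) := u(x',t)$ (the $i$-th variable placed in the last slot), so that $g'=\partial_i u$ and $g''=\partial_{ii}u$. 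Since $|g'|=|\partial_i u|\le|\nabla u|$ and $|g''|=|\partial_{ii}u|\le |D^2u|$ along that line, integrating in $x'\in(0,1)^{n-1}$ and summing over $i\in\{1,\dots,n\}$ turns the middle term into $2n\delta\int_Q|\nabla u|\,|D^2u|$ and the last term into $n(18/\delta)^2\int_Q u^2$, which is exactly the claimed bound. (A routine smooth approximation justifies the term-by-term manipulations, though $u\in C^2(\overline Q)$ already suffices here.)

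\emph{The one-dimensional inequality.} I would first treat the model case $\delta=1$. By an averaging argument there is a point $a\in[0,1/3]$ with $|g(a)|\le 3\int_0^{1/3}|g|\le 3\|g\|_{L^1(0,1)}$ and a point $b\in[2/3,1]$ with $|g(b)|\le 3\|g\|_{L^1(0,1)}$; since $b-a\ge 1/3$, the mean value theorem yields $t_0\in(a,b)$ with $|g'(t_0)|=|g(b)-g(a)|/(b-a)\le 18\|g\|_{L^1(0,1)}\le 18\|g\|_{L^2(0,1)}$. Applying the fundamental theorem of calculus to $(g')^2$, for every $t$ one has $g'(t)^2 = g'(t_0)^2 + 2\int_{t_0}^t g'g'' \le g'(t_0)^2 + 2\int_0^1|g'|\,|g''|$; integrating in $t$ over $(0,1)$ gives $\int_0^1 (g')^2 \le 18^2\|g\|_{L^2(0,1)}^2 + 2\int_0^1|g'|\,|g''|$, i.e.\ the case $\delta=1$. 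For general $\delta\in(0,1)$ I would rescale: the same argument on an interval of length $\ell$ produces $\int_0^\ell (g')^2 \le (18^2/\ell^2)\|g\|_{L^2(0,\ell)}^2 + 2\ell\int_0^\ell|g'|\,|g''|$, and summing this over a partition of $(0,1)$ into consecutive intervals of length $\ell\le\delta$ yields the stated inequality (with a harmless adjustment of the constant when $1/\delta$ is not an integer).

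The only genuinely delicate point is the bookkeeping of constants: extracting the precise factor $18$ from the mean-value step, and controlling simultaneously the blow-up of the coefficient of $\|u\|_{L^2}^2$ and the smallness of the coefficient of $\|\,|\nabla u|\,D^2u\,\|_{L^1}$ under the rescaling. Everything else is routine, and the exact numerical value of the constant is immaterial for the way the inequality is used (in the proof of Proposition~\ref{prop:l2l1}).
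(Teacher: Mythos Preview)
The paper does not actually prove Proposition~\ref{interpol}: Appendix~\ref{app:interpolation} merely recalls the statement from~\cite{CabreQuant} without argument, so there is no in-paper proof to compare against.

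That said, your proof is correct and is in fact the standard argument (and essentially the one in~\cite{CabreQuant}): reduce to dimension one by Fubini and sum over coordinate directions, then in one dimension locate a point where $|g'|$ is controlled by $\|g\|_{L^2}$ via a mean-value step, and propagate this pointwise bound to all of $(0,1)$ using $(g'(t))^2 = (g'(t_0))^2 + 2\int_{t_0}^t g'g''$. The scaling-and-partition step for general $\delta$ is also standard. Your one honest caveat---that when $1/\delta\notin\mathbb{Z}$ a partition into equal subintervals of length $\ell=1/\lceil 1/\delta\rceil\le\delta$ gives $2\ell\le 2\delta$ on the Hessian term but $(18/\ell)^2\ge(18/\delta)^2$ on the $L^2$ term---is real: the argument as written yields, say, $(36/\delta)^2$ rather than $(18/\delta)^2$ for arbitrary $\delta$. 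As you correctly note, this is irrelevant to the application in Proposition~\ref{prop:l2l1}, where only the qualitative form $C\delta$ and $C\delta^{-2}$ of the two coefficients matters.
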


\begin{proposition}[X. Cabr\'{e} \cite{CabreQuant}]
\label{nash}
Let $Q = (0,1)^n \subset \R^{n}$ and $u \in C^2(\overline{Q})$.

Then, for every $\widetilde{\delta} \in (0,1)$,
\[
\| u\|_{L^2(Q)}^2 \leq 2 n^2 \widetilde{\delta}^2 \|\nabla u\|_{L^2(Q)}^2 + 2^{n+1} \widetilde{\delta}^{-n} \|u\|_{L^1(Q)}^2.
\]
\end{proposition}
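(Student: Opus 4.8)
The plan is to deduce this Nash-type inequality from a subdivision argument built on the Poincar\'e--Wirtinger inequality on cubes. First I would record the elementary Poincar\'e inequality: for a cube $Q_h\subset\R^n$ of side length $h$ and $u\in C^1(\overline{Q_h})$, writing $(u)_{Q_h}:=|Q_h|^{-1}\int_{Q_h}u\,\d x$, one has
\[
\|u-(u)_{Q_h}\|_{L^2(Q_h)}^2\le C_n\,h^2\,\|\nabla u\|_{L^2(Q_h)}^2
\]
for a purely dimensional constant $C_n$. This follows by writing, for $x,y\in Q_h$, the difference $u(x)-u(y)$ as a telescoping sum of one-dimensional integrals of the partial derivatives $\partial_k u$ along segments parallel to the coordinate axes (each of length at most $h$), averaging over $y\in Q_h$, and applying the Cauchy--Schwarz inequality. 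One can take $C_n\le n^2$ (in fact $C_n=\pi^{-2}$ is sharp, but this precision is not needed), and it is exactly this slack that makes the stated constant comfortable.

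On a single cube $Q_h$ I would then combine the Poincar\'e inequality with the trivial bound $\|u\|_{L^2(Q_h)}^2\le 2\|u-(u)_{Q_h}\|_{L^2(Q_h)}^2+2|Q_h|\,(u)_{Q_h}^2$ together with the identity $|Q_h|\,(u)_{Q_h}^2=|Q_h|^{-1}\big(\int_{Q_h}u\big)^2=h^{-n}\|u\|_{L^1(Q_h)}^2$, obtaining
\[
\|u\|_{L^2(Q_h)}^2\le 2C_n\,h^2\,\|\nabla u\|_{L^2(Q_h)}^2+2h^{-n}\,\|u\|_{L^1(Q_h)}^2.
\]

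The last step is the subdivision. Given $\widetilde{\delta}\in(0,1)$, set $N:=\lceil 1/\widetilde{\delta}\rceil$ and $h:=1/N$, and partition $Q=(0,1)^n$ into $N^n$ congruent subcubes $Q_i$ of side $h$. Since $1/\widetilde{\delta}>1$ we get $N\le 1/\widetilde{\delta}+1<2/\widetilde{\delta}$, hence $h\le\widetilde{\delta}$ and $h^{-n}\le 2^n\widetilde{\delta}^{-n}$. Applying the per-cube bound on each $Q_i$ and summing over $i$, using $\sum_i\|\nabla u\|_{L^2(Q_i)}^2=\|\nabla u\|_{L^2(Q)}^2$ and, since all the terms are nonnegative, $\sum_i\|u\|_{L^1(Q_i)}^2\le\big(\sum_i\|u\|_{L^1(Q_i)}\big)^2=\|u\|_{L^1(Q)}^2$, I would arrive at
\[
\|u\|_{L^2(Q)}^2\le 2C_n\,\widetilde{\delta}^2\,\|\nabla u\|_{L^2(Q)}^2+2^{n+1}\widetilde{\delta}^{-n}\,\|u\|_{L^1(Q)}^2,
\]
which gives the claim once $C_n\le n^2$ is used.

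There is no serious obstacle: the inequality is classical and the constants are generous. The only point requiring a little care is the choice $N=\lceil 1/\widetilde{\delta}\rceil$ and the elementary estimate $h^{-1}<2/\widetilde{\delta}$, which is what produces the factor $2^{n+1}$; were $1/\widetilde{\delta}$ an integer one would get a cleaner power of $2$, and the rounding is the reason for the slight loss. The Poincar\'e inequality on the cube, though routine, is the only genuine input; approximating a general $u\in C^2(\overline Q)$ is immediate since everything is already smooth.
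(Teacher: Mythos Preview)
Your argument is correct. The paper does not actually prove this proposition; it merely recalls it in Appendix~\ref{app:interpolation} as a result of Cabr\'{e}~\cite{CabreQuant}, so there is no in-paper proof to compare against. Your subdivision-plus-Poincar\'e approach is the standard one for such Nash-type inequalities and goes through cleanly with the stated constants.

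One small remark: the bound $\|u\|_{L^2(Q_h)}^2\le 2\|u-(u)_{Q_h}\|_{L^2(Q_h)}^2+2|Q_h|\,(u)_{Q_h}^2$ is wasteful, since by orthogonality of $u-(u)_{Q_h}$ and the constant $(u)_{Q_h}$ one has the exact identity $\|u\|_{L^2(Q_h)}^2=\|u-(u)_{Q_h}\|_{L^2(Q_h)}^2+|Q_h|\,(u)_{Q_h}^2$. Using this would remove a factor of $2$ from both terms, but since the target constants are $2n^2$ and $2^{n+1}$ your cruder bound already suffices (and in fact matches those constants on the nose once $C_n\le n^2$ is invoked). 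The Poincar\'e constant claim $C_n\le n^2$ is more than comfortable: the sharp value on the cube is $1/\pi^2<1$.
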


\section{Absorbing errors in larger balls}
\label{app:simon}

We recall the following well-known lemma of Simon~\cite{Simon},
which allows to absorb errors in larger balls when controlling quantities in smaller balls:

 \begin{lemma}[L. Simon \cite{Simon}]
\label{lemma:simon}
Let $\beta\geq 0$ and $C_0>0$. 
Let  $\mathcal{B}$ be the class of all open balls $B$ contained in the unit ball $B_1$ of $\R^n$ and let 
$\sigma \colon \mathcal{B} \rightarrow [0,+\infty)$ satisfy the following subadditivity property:
\[
\sigma(B)\leq \sum_{j=1}^N \sigma(B^j) \quad \mbox{ whenever }  N\in \mathbb{Z}^+, \{B^j\}_{j=1}^N \subset \mathcal{B}, \text{ and } B \subset \bigcup_{j=1}^N B^j. 
\]

It follows that there exists a constant $\delta>0$, which depends only on $n$ and $\beta$, such that if
\[
\rho^\beta \sigma\left(B_{\rho/2}(y)\right) \leq \delta \rho^\beta \sigma\left(B_\rho(y)\right)+ C_0\quad \mbox{whenever }B_\rho(y)\subset B_1,
\]
then
\[ 
\sigma(B_{1/2}) \leq C C_0
\]
for some constant $C$ which depends only on $n$ and $\beta$.
\end{lemma}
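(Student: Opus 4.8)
The plan is to treat this as the classical ``hole-filling'' absorption argument: iterate the hypothesis through a covering by dyadically smaller balls, and then absorb the error into the left-hand side. Fix once and for all a small constant $\delta = \delta(n,\beta)>0$, to be pinned down at the very end, and set
\[
M := \sup\bigl\{\, \rho^\beta\, \sigma(B_{\rho/2}(y)) \ :\ B_\rho(y)\subset B_1 \,\bigr\}.
\]
It suffices to prove $M \leq C(n,\beta)\,C_0$, since taking $\rho=1$ and $y=0$ (legitimate because $B_1\subset B_1$) gives $\sigma(B_{1/2}) = \sigma(B_{1/2}(0)) \leq M$.

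The first step is the a priori finiteness $M<\infty$, and this is the only place the normalization ``$B\subset B_1$'' together with $\beta\geq 0$ enters. Applying the subadditivity hypothesis with the single covering ball $B^1 = B_1$ — which contains every admissible $B_{\rho/2}(y)$ — gives $\sigma(B_{\rho/2}(y)) \leq \sigma(B_1) < \infty$, and since $\rho\leq 1$ and $\beta\geq 0$ we get $M \leq \sigma(B_1) < \infty$. This bound is useless quantitatively; its sole purpose is to make the absorption step below legitimate.

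The second step is the iteration. Fix $B_\rho(y)\subset B_1$ and cover $B_{\rho/2}(y)$ by $N=N(n)$ balls $B_{\rho/8}(y_i)$ with centers $y_i\in B_{\rho/2}(y)$ (possible with $N$ purely dimensional, e.g.\ by a maximal $(\rho/8)$-separated set). The nesting is then $B_{\rho/4}(y_i)\subset B_{\rho/2}(y_i)\subset B_\rho(y)\subset B_1$. Applying the hypothesis to $B_{\rho/4}(y_i)$ (radius $\rho/4$ in the role of $\rho$) gives $\sigma(B_{\rho/8}(y_i)) \leq \delta\,\sigma(B_{\rho/4}(y_i)) + C_0(\rho/4)^{-\beta}$; bounding $\sigma(B_{\rho/4}(y_i)) = \sigma(B_{(\rho/2)/2}(y_i)) \leq (\rho/2)^{-\beta} M$ by the definition of $M$ (using $B_{\rho/2}(y_i)\subset B_1$), summing over $i$ by subadditivity, and multiplying by $\rho^\beta$, one obtains
\[
\rho^\beta\, \sigma(B_{\rho/2}(y)) \ \leq\ N\,2^\beta\,\delta\,M \ +\ N\,4^\beta\,C_0 .
\]
Taking the supremum over all $B_\rho(y)\subset B_1$ gives $M \leq N\,2^\beta\,\delta\,M + N\,4^\beta\,C_0$. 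Now choose $\delta$ so small that $N\,2^\beta\,\delta \leq 1/2$; since $M<\infty$, the term $N\,2^\beta\,\delta\,M$ is absorbed and $M \leq 2N\,4^\beta\,C_0$, which is the claim with $C = 2N\,4^\beta$ (depending only on $n$ and $\beta$).

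There is no deep obstacle here; the one point requiring genuine care is the bookkeeping in the covering, namely choosing the covering scale small enough ($\rho/8$, not $\rho/4$) and keeping the $y_i$ inside $B_{\rho/2}(y)$, so that \emph{both} the ball to which the hypothesis is applied \emph{and} the intermediate ball controlled by $M$ stay inside $B_1$. The a priori finiteness of $M$ is the other step one must not skip, since without it the absorption would be vacuous.
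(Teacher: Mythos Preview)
The paper does not actually prove this lemma: Appendix~\ref{app:simon} merely recalls the statement and cites Simon's original paper, so there is no in-paper proof to compare against. Your argument is correct and is essentially the standard proof of Simon's lemma---define the scale-invariant supremum $M$, use subadditivity with the trivial cover to get $M<\infty$, then cover at a smaller dyadic scale so that both the ball where the hypothesis is applied and the intermediate ball controlled by $M$ stay inside $B_1$, and absorb. Your bookkeeping (centers $y_i\in B_{\rho/2}(y)$, covering radius $\rho/8$, nesting $B_{\rho/4}(y_i)\subset B_{\rho/2}(y_i)\subset B_\rho(y)\subset B_1$) is clean and the choice $\delta = (2N\,2^\beta)^{-1}$ does the job.
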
 

\section*{Acknowledgements}

The author thanks Xavier Cabr\'{e} for his patient guidance and useful discussions on the topic of this article.

\begin{bibdiv}
\begin{biblist}

\bib{Brezis-IFT}{article}{
      author={Brezis, H.},
       title={Is there failure of the inverse function theorem? {M}orse theory,
  minimax theory and their applications to nonlinear differential equations},
        date={2003},
     journal={New Stud. Adv. Math.},
      volume={1},
       pages={23\ndash 33},
}

\bib{Cabre-Dim4}{article}{
      author={Cabr{\'e}, X.},
       title={Regularity of minimizers of semilinear elliptic problems up to
  dimension 4},
        date={2010},
     journal={Comm. Pure Appl. Math.},
      volume={63},
       pages={1362\ndash 1380},
}

\bib{Cabre-StableHardy}{article}{
      author={Cabr{\'e}, X.},
       title={A new proof of the boundedness results for stable solutions to
  semilinear elliptic equations},
        date={2019},
     journal={Discrete Contin. Dyn. Syst.},
      volume={39},
       pages={7249\ndash 7264},
}

\bib{CabreQuant}{article}{
      author={Cabr{\'e}, X.},
       title={A quantitative proof of the {H}\"{o}lder regularity of stable
  solutions to semilinear elliptic equations},
        date={2022},
        	journal = {Preprint arXiv 2205.11352},
}

\bib{CabreCapella-Radial}{article}{
      author={Cabr{\'e}, X.},
      author={{Capella}, A.},
       title={Regularity of radial minimizers and extremal solutions of
  semilinear elliptic equations},
        date={2006},
     journal={J. Funct. Anal.},
      volume={238},
       pages={709\ndash 733},
}

\bib{CabreFigalliRosSerra}{article}{
      author={Cabr{\'e}, X.},
      author={Figalli, A.},
      author={Ros-Oton, X.},
      author={Serra, J.},
       title={Stable solutions to semilinear elliptic equations are smooth up
  to dimension 9},
        date={2020},
     journal={Acta Math.},
      volume={224},
       pages={187\ndash 252},
}

\bib{CrandallRabinowitz}{article}{
      author={Crandall, M.~G.},
      author={Rabinowitz, P.~H.},
       title={Some continuation and variational methods for positive solutions
  of nonlinear elliptic eigenvalue problems},
        date={1975},
     journal={Arch. Rational Mech. Anal.},
      volume={58},
       pages={207\ndash 218},
}

\bib{Dupaigne}{book}{
      author={Dupaigne, L.},
       title={Stable solutions of elliptic partial differential equations},
   publisher={Chapman and Hall/CRC},
        date={2011},
}

\bib{FarinaSireValdinoci}{article}{
      author={Farina, A.},
      author={Sire, Y.},
      author={Valdinoci, E.},
       title={Stable solutions of elliptic equations on {R}iemannian
  manifolds},
        date={2013},
        ISSN={1050-6926},
     journal={J. Geom. Anal.},
      volume={23},
      number={3},
       pages={1158\ndash 1172},
         url={https://doi.org/10.1007/s12220-011-9278-9},
      review={\MR{3078347}},
}

\bib{Gelfand}{article}{
      author={Gel'fand, I.~M.},
       title={Some problems in the theory of quasilinear equations},
        date={1963},
     journal={Amer. Math. Soc. Transl. (2)},
      volume={29},
       pages={295\ndash 381},
}

\bib{GilbargTrudinger}{book}{
      author={Gilbarg, D.},
      author={Trudinger, N.~S.},
       title={Elliptic partial differential equations of second order},
     edition={2},
   publisher={Springer Berlin, New York},
        date={2001},
}

\bib{LiebLoss}{book}{
      author={Lieb, E.~H.},
      author={Loss, M.},
       title={Analysis},
     edition={Second},
      series={Graduate Studies in Mathematics},
   publisher={American Mathematical Society, Providence, RI},
        date={2001},
      volume={14},
}

\bib{Nedev}{article}{
      author={Nedev, G.},
       title={Regularity of the extremal solution of semilinear elliptic
  equations},
        date={2000},
     journal={C. R. Acad. Sci. Paris S{\'e}r. I Math.},
      volume={330},
       pages={997\ndash 1002},
}

\bib{PengZhangZhouBMO}{article}{
      author={Peng, F.},
      author={Zhang, Y.R.},
      author={Zhou, Y.},
       title={Optimal regularity and {L}iouville property for stable solutions
  to semilinear elliptic equations in $\mathbb{R}^n$ with $n \geq 10$},
        date={2021},
     journal={Preprint arXiv 2105.02535},
}

\bib{PengZhangZhouHolder}{article}{
      author={Peng, F.},
      author={Zhang, Y.R.},
      author={Zhou, Y.},
       title={Interior {H}\"{o}lder regularity for stable solutions to
  semilinear elliptic equations up to dimension 5},
        date={2022},
     journal={Preprint arXiv 2204.06345},
}

\bib{Simon}{article}{
      author={Simon, L.},
       title={Schauder estimates by scaling},
        date={1997},
        ISSN={0944-2669},
     journal={Calc. Var. Partial Differential Equations},
      volume={5},
      number={5},
       pages={391\ndash 407},
         url={https://doi.org/10.1007/s005260050072},
      review={\MR{1459795}},
}

\bib{SternbergZumbrun1}{article}{
      author={Sternberg, P.},
      author={Zumbrun, K.},
       title={Connectivity of phase boundaries in strictly convex domains},
        date={1998},
     journal={Arch. Rational Mech. Anal.},
      volume={141},
       pages={375\ndash 400},
}

\bib{Winkert}{article}{
      author={Winkert, P.},
       title={{$L^\infty$}-estimates for nonlinear elliptic {N}eumann boundary
  value problems},
        date={2010},
        ISSN={1021-9722},
     journal={NoDEA Nonlinear Differential Equations Appl.},
      volume={17},
      number={3},
       pages={289\ndash 302},
         url={https://doi.org/10.1007/s00030-009-0054-5},
      review={\MR{2652229}},
}

\end{biblist}
\end{bibdiv}

\end{document}